\documentclass[11pt]{article}
\usepackage[english]{babel}

\usepackage{latexsym}
\usepackage{amsmath}
\usepackage{amsfonts}
\usepackage{mathrsfs}
\usepackage{amstext}
\usepackage{amssymb}
\usepackage{amsthm}       
\usepackage{hyperref}

\usepackage{todonotes}
\usepackage{soul}

\usepackage{xspace}
\usepackage{xcolor}

\newcommand\R{\ensuremath{\mathbb{R}}}

\newcommand\N{\ensuremath{\mathbb{N}}}
\newcommand\eps{\ensuremath{\varepsilon}}

\allowdisplaybreaks

\newtheorem{theorem}{Theorem}[section]
\newtheorem{prop}[theorem]{Proposition}
\newtheorem{cor}[theorem]{Corollary}

\newtheorem{defn}[theorem]{Definition}
\newtheorem{rem}[theorem]{Remark}
\newtheorem{lem}[theorem]{Lemma}

\usepackage[margin=1.2in]{geometry}

\usepackage{graphicx, fullpage}	

\begin{document}

\title{Asymptotic behavior of  generalized capacities 
with applications to eigenvalue perturbations: the higher dimensional case}
\author{\ }

\author{
Laura Abatangelo\thanks{Dipartimento di Matematica, Politecnico di Milano, P.zza Leonardo da Vinci 32, 20133 Milano, Italy \texttt{laura.abatangelo@polimi.it}} ,\ 
  Corentin L\'ena\thanks{Dipartimento di Tecnica e Gestione dei Sistemi Industriali (DTG), Universit\`a degli Studi di Padova,  Stradella S. Nicola 3, 36100 Vicenza, Italy \texttt{corentin.lena@unipd.it}} \thanks{Dipartimento di Matematica ``Tullio Levi-Civita'', Universit\`a degli Studi di Padova, via Trieste 63, 35121 Padova, Italy},
and Paolo Musolino\thanks{Dipartimento di Scienze Molecolari e Nanosistemi, Universit\`a Ca' Foscari Venezia, via Torino 155, 30172 Venezia Mestre, Italy \texttt{paolo.musolino@unive.it}}}

\date{September 10, 2023}


\maketitle 

\noindent
{\bf Abstract.} We provide a full series expansion of a generalization of the so-called $u$-capacity related to the Dirichlet-Laplacian in dimension three and higher, extending the results of \cite{AbBoLeMu21, AbLeMu22} dealing with the planar case. We apply the result in order to study the asymptotic behavior {of perturbed eigenvalues}
when Dirichlet conditions are imposed on a small regular subset of the domain of the eigenvalue problem. 

\vspace{11pt}

\noindent
{\bf Keywords.} Dirichlet--Laplacian, capacity, multiple eigenvalues, asymptotic expansion, perforated domains

\vspace{11pt}

\noindent
{\bf 2020 MSC.} 
35P15,
31C15,
31B10,
35B25,
35C20.

\section{Introduction}

This paper is devoted to the analysis of the asymptotic behavior of a generalization of the condenser capacity of a bounded domain of $\mathbb{R}^d$ ($d\geq 3$) with a small hole of size $\varepsilon>0$, as the parameter $\varepsilon$ approaches the degenerate value $0$. Such a generalization, as we shall see, has  proved to be extremely useful for the analysis of the behavior of simple and multiple eigenvalues for the Dirichlet-Laplacian in a bounded domain with a small hole. A careful investigation of the behavior of such a generalized capacity has been  carried out in \cite{AbBoLeMu21, AbLeMu22} in the planar case, with the corresponding applications to the study of the eigenvalues in perforated domains. Here, instead, we wish to consider the case of dimension $d$ greater than or equal to $3$. As we shall see, the cases of dimension $d=2$ and of $d\geq 3$ need to be treated separately because of the different aspect of the fundamental solution of the Laplace operator.

Before introducing the generalization of the capacity we are going to study, we begin by recalling the notion of {\it condenser capacity} and a first generalization of it, known as {\it $u$-capacity}. 
 
We recall that for a bounded, connected open set $\Omega$   of $\mathbb{R}^d$ and a  compact subset $K$ of $\Omega$, the {\it (condenser) capacity} of $K$ in $\Omega$ is 
\begin{equation} \label{eq:cap}
\mathrm{Cap}_{\Omega}(K)\equiv \mathrm{inf} \bigg\{\int_{\Omega} |\nabla f|^2\, dx \colon f \in H^1_0(\Omega)\ \mathrm{and}\ f-\eta_K \in H^1_0(\Omega \setminus K)\bigg\}\, ,
\end{equation}
where $\eta_K$ is a fixed smooth function such that $\mathrm{supp}\, \eta_K \subseteq \Omega$ and $\eta_K \equiv 1$ in a neighborhood of $K$. As is well known, the infimum in \eqref{eq:cap} is achieved by a function $V_K \in H^1_0(\Omega)$ such that $V_K-\eta_K \in H^1_0(\Omega\setminus K)$ so that
\[
\mathrm{Cap}_\Omega(K)=\int_{\Omega}|\nabla V_K|^2 \, dx\, ,
\]
where $V_K$ ({\it capacitary potential}) is the unique solution of the Dirichlet problem
\begin{equation}\label{eq:dircapK}
\left\{
\begin{array}{ll}
\Delta V_K=0&\text{ in }\Omega \setminus K\,,\\
V_K=0&\text{on } \partial\Omega\,,\\
V_K=1 &\text{ on }K\,.
\end{array}
\right.
\end{equation}
By saying that $V_K$ solves \eqref{eq:dircapK} we mean that $V_K \in H_0^1(\Omega)$, $V_K - \eta_K \in H_0^1(\Omega \setminus K)$, and
\[
\int_{\Omega \setminus K}\nabla V_K \cdot \nabla \phi\, dx=0 \qquad \forall\phi \in H_0^1(\Omega\setminus K).
\]
Moreover, if $\Omega$ and $K$ are sufficiently regular, one can reformulate the boundary conditions of problem \eqref{eq:dircapK} in the trace sense. Let us also point out that  $\mathrm{Cap}_\Omega(K)$ and $V_K$ do not depend on the choice of $\eta_K$, but only on the set $K$.

The following set of properties for the domains will play a key role in our analysis, so we summarize it in a definition.
\begin{defn} \label{def:domain} We say that $\Omega\subset\R^d$ is an \emph{admissible domain} if  
\begin{enumerate}
	\item $\Omega$ is open, bounded and connected;
	\item $\Omega$ is in the Schauder class $C^{1,\alpha}$ for some $\alpha\in]0,1[$;
	\item $0\in\Omega$ and $\R^d\setminus\overline{\Omega}$ is connected. 
	\end{enumerate}
\end{defn}

In the case where $d\ge3$, we can use the notion of \emph{Newtonian capacity}. We restrict ourselves to the case where $K=\overline{\omega}$, with $\omega$ an admissible domain. Then, there exists a unique function
 \[W_K:\R^d\setminus\omega\to \R\]  
 that is continuous, real-analytic in the open set $\R^d\setminus K$, and satisfies
\begin{equation}\label{eq:NewtoncapK}
\left\{
\begin{array}{ll}
\Delta W_K=0&\text{ in } \mathbb{R}^d \setminus K\,,\\
W_K=1&\text{on } \partial K\,,\\
W_K(x)\to0 &\text{ for }x\to\infty\,.
\end{array} 
\right.
\end{equation}
The last condition {in \eqref{eq:NewtoncapK}} is equivalent to the requirement that $W_K$ be harmonic at infinity. We then define the Newtonian capacity as
\begin{equation}\label{eq:Newcap}
\mathrm{Cap}_{{\mathbb{R}^d}}(K)\equiv\int_{\R^d\setminus K}|\nabla W_K|^2 \, dx\, ,
\end{equation}
 the integral being finite.

 In order to study the asymptotic behavior {as $\varepsilon\to 0$}  of eigenvalues of the Dirichlet-Laplacian in perforated domains of the type
\[
\Omega_\varepsilon= \Omega \setminus \varepsilon \overline{\omega}\, ,
\]
where $\Omega,\ \omega\subset\R^d$ are admissible domains in the sense of Definition \ref{def:domain}, a generalization of the notion of capacity --the so-called {\it $u$-capacity}--, has been introduced (see Abatangelo, Felli, Hillairet, and L\'ena \cite{AbFeHiLe19}). 
 
\begin{defn}\label{def:ucap} Given a function $u\in H^1_0(\Omega)$, the $u$-capacity of a compact set $K\subseteq \Omega$ is 
 \begin{equation} \label{eq:ucap}
\mathrm{Cap}_{\Omega}(K,u)\equiv \mathrm{inf} \bigg\{\int_{\Omega} |\nabla f|^2\, dx \colon f \in H^1_0(\Omega)\ \mathrm{and}\ f-u \in H^1_0(\Omega \setminus K)\bigg\}\,. 
\end{equation}
The infimum in \eqref{eq:ucap} is achieved by a unique function $V_{K,u}\in H_0^1(\Omega)$, so that
\[
\mathrm{Cap}_\Omega(K,u)=\int_{\Omega}|\nabla V_{K,u}|^2 \, dx\, .
\]
We call $V_{K,u}$ the \emph{potential} associated with $u$ and $K$.

Furthermore, $V_{K,u}$ is the unique weak solution of  the Dirichlet problem
\[
\left\{
\begin{array}{ll}
\Delta V_{K,u}=0&\text{ in }\Omega \setminus K\,,\\
V_{K,u}=0&\text{ on } \partial\Omega\,,\\
V_{K,u}=u &\text{ on }K\,,
\end{array}
\right.
\]
where, by \emph{weak solution}, we mean  that $V_{K,u}\in H^1_0(\Omega)$, $u-V_{K,u}\in H_0^1(\Omega\setminus K)$ and $\int_\Omega \nabla V_{K,u}\cdot\nabla \varphi\, dx=0$ for all $\varphi\in H_0^1(\Omega\setminus K)$. 
\end{defn}

We extend Definition \ref{def:ucap}  to $H^1(\Omega)$ functions, by setting, for any $u \in H^1(\Omega)$, \[\mathrm{Cap}_\Omega(K,u) \equiv \mathrm{Cap}_\Omega(K,\eta_K u)\, ,\] where $\eta_K$ is a fixed smooth function such that $\mathrm{supp}\, \eta_K \subseteq \Omega$ and $\eta_K \equiv 1$ in a neighborhood of $K$. Here again,  $\mathrm{Cap}_\Omega(K,u)$ and the associated potential $V_{K,u}$ do not depend on the  choice of $\eta_K$, but only on $K$ and $u$.


With this tool in hand, one can obtain an asymptotic formula for the behavior of $N$-th eigenvalue $\lambda_N(\Omega_\varepsilon)$ of the Dirichlet-Laplacian in $\Omega_\varepsilon$ as $\varepsilon \to 0$, under the assumption that the $N$-th eigenvalue $\lambda_N(\Omega)$ of the Dirichlet-Laplacian in the unperturbed set $\Omega$ is simple.

In order to be more precise, we recall that if $\tilde{\Omega}$ is a bounded open set in $\mathbb{R}^{d}$  the  eigenvalue problem 
\[
 \begin{cases}
  -\Delta u = \lambda u &\text{in }\tilde\Omega\, ,\\
  u=0 &\text{on }\partial \tilde\Omega\, 
 \end{cases}
\]
 admits a sequence of real eigenvalues tending to infinity
\[
0<\lambda_1({\tilde{\Omega}})\leq\lambda_2({\tilde{\Omega}})\leq \dots \leq \lambda_N({\tilde{\Omega}})\leq \dots \to +\infty
\]
where every eigenvalue is repeated as many times as its multiplicity and, in particular, the first one is  simple if ${\tilde{\Omega}}$ is connected.
{The dependence of the spectrum of the Laplace operator upon domain perturbations has long been investigated, with particular attention to the case of sets with small perforations. { Here we mention, for example, Abatangelo, Felli, Hillairet, and L\'ena \cite{AbFeHiLe19}, Ammari, Kang, and Lee \cite{AmKaLe09}, Besson \cite{Be85}, Chavel and Feldman \cite{ChFe88}, Colbois and Courtois \cite{CoCo91}, Courtois \cite{Co95}, Felli, Noris, and Ognibene \cite{FeNoOgCalcVar2021}, Lamberti and Perin \cite{LaPe10}, Lanza de Cristoforis \cite{La12}, Maz'ya, Movchan, and Nieves \cite{MaMoNi17}, Maz'ya, Nazarov, and Plamenevski\u\i  \ \cite{MaNaPl84}, Ozawa \cite{Oz81b}, Rauch and Taylor \cite{RaTa75},  Samarski\u\i \  \cite{Sa48}.} In particular, we recall that, by Courtois \cite[Proof of Theorem 1.2]{Co95} and by Abatangelo, Felli, Hillairet, and L\'ena \cite[Theorem 1.4]{AbFeHiLe19}, an asymptotic formula for the eigenvalues can be obtained in terms of $\mathrm{Cap}_\Omega({\varepsilon\overline{\omega}},u_N)$, where $u_N$ is a $L^2(\Omega)$-normalized eigenfunction corresponding to $\lambda_N(\Omega)$: if $\Omega$ and $\omega$ are admissible domains and $\lambda_N(\Omega)$ is simple, this reads as
\[
\lambda_N(\Omega \setminus(\varepsilon\overline{\omega}))=\lambda_N(\Omega)+\mathrm{Cap}_\Omega({\varepsilon\overline{\omega}},u_N)+o(\mathrm{Cap}_\Omega({\varepsilon\overline{\omega}},u_N)) \qquad \text{as $\varepsilon \to 0^+$}\, . 
\]

Unfortunately, the above mentioned result {a priori only} holds for simple eigenvalues. In order to study multiple eigenvalues, {one needs a further generalization of the capacity.}
The eigenspace being of dimension greater than one, one needs to take into account the interaction between different eigenfunctions corresponding to the same eigenvalue. 
\begin{defn}\label{def:uvcap} Given $u^a,u^b\in H_0^1(\Omega)$, the $(u^a,u^b)$-capacity of a compact set $K\subseteq \Omega$ is 
\begin{equation*}
	\mathrm{Cap}_\Omega(K,u^a,u^b)\equiv\int_\Omega\nabla V_{K,u^a}\cdot\nabla V_{K,u^b}\, dx.
\end{equation*}
\end{defn}
Here above the symbol $\cdot$ denotes the scalar product in $\mathbb{R}^d$.

Again, as we have done for the $u$-capacity, we extend Definition \ref{def:uvcap}  to $H^1(\Omega)$ functions, by setting, for any pair of functions $u^a,u^b \in H^1(\Omega)$, \[\mathrm{Cap}_\Omega(K,u^a,u^b) \equiv \mathrm{Cap}_\Omega(K,\eta_K u^a, \eta_K u^b)\, ,\] where $\eta_K$ is a fixed smooth function such that $\mathrm{supp}\, \eta_K \subseteq \Omega$ and $\eta_K \equiv 1$ in a neighborhood of $K$.

With the new object  $\mathrm{Cap}_\Omega(K,u^a,u^b)$ we can clearly recover the classical condenser capacity as well as the $u$-capacity.

\begin{rem}
If $u \in H^1(\Omega)$ and $K$ is a compact {subset} of $\Omega$, then
\begin{equation*}
	\mathrm{Cap}_\Omega(K,u,u)=	\mathrm{Cap}_\Omega(K,u)\, .
\end{equation*}
Also, 
\begin{equation*}
	\mathrm{Cap}_\Omega(K,1,1)=	\mathrm{Cap}_\Omega(K)\, .
\end{equation*}
\end{rem}

In \cite{AbLeMu22} it has been shown that by exploiting the definition of $(u^a,u^b)$-capacity, we can 
obtain 
{the asymptotic behavior}
of multiple eigenvalues. More precisely,  if one has an asymptotic expansion of $\mathrm{Cap}_\Omega(\varepsilon \overline{\omega},u^a,u^b)$ where $u^a$ and $u^b$ are eigenfunctions corresponding to the same  multiple eigenvalue in the unperturbed set $\Omega$, then one can deduce {the asymptotic behavior of} the corresponding eigenvalues in the perforated set $\Omega \setminus (\varepsilon\overline{\omega})$. Therefore, in \cite{AbLeMu22}, we have obtained a detailed and fully constructive representation of $\mathrm{Cap}_\Omega(\varepsilon \overline{\omega},u^a,u^b)$ as a convergent series in case of dimension $d=2$. Since our method is based on potential theory, the higher dimensional case differs from the two-dimensional one (mainly due to the different aspect of the fundamental solution for the Laplace equation). The goal of the present paper is to obtain a representation of $\mathrm{Cap}_\Omega(\varepsilon \overline{\omega},u^a,u^b)$ for $\varepsilon$ close to $0$  in terms of a convergent series and to apply it to study the asymptotic behavior of Dirichlet eigenvalues in perforated domains, in the case $d\geq 3$. Such results extend the work done in \cite{AbLeMu22} for the case of dimension $d=2$. Moreover, taking $u^a=u^b=1$, one can immediately deduce the expansion for the condenser capacity $\mathrm{Cap}_{\Omega}(\varepsilon \overline{\omega})$ as $\varepsilon \to 0$. We observe that several authors have considered the asymptotic behavior of condenser capacities in several geometrical situations (see Dubinin \cite{Du14}, Lanza de Cristoforis \cite{La02,La04, La08}, Maz'ya, Nazarov, and Plamenevskij \cite[\S 8.1]{MaNaPl00i}, So\u{\i}bel'man \cite{So81}). As an example, we mention that the asymptotic expansion of the capacity as the hole shrinks to a point can be deduced from the analysis of energy integrals in perforated domains by Maz'ya, Nazarov, and Plamenevskij \cite[\S 8.1]{MaNaPl00i}. For example, in dimension two, they prove that there exists $\delta >0$ such that
\begin{equation}\label{eq:MNPexp}
\mathrm{Cap}_{\Omega}(\varepsilon\overline{\omega})=-\frac{2\pi}{\log \varepsilon + 2 \pi \big(H_{(0,0)}+N\big)} +o(\varepsilon^{\delta})\, , 
\end{equation}
for $\varepsilon$ small and positive, where $e^{2\pi N}$ is the {\it logarithmic capacity} (or {\it outer conformal radius}) of $\omega$ and $H_{(0,0)}$ is the value at $x=0$ of the unique harmonic function $h$ in $\Omega$ such that $h(x)=-\log|x|/(2\pi)$  for all $x\in\partial\Omega$. For the definition of logarithmic capacity we refer, {\it e.g.}, to Landkof \cite[p.~168]{La72}, P\'olya and Szeg\"o \cite[p.~2]{PoSz51},  and Pommerenke \cite[p.~332]{Po75}. In other words, $N$ in equation \eqref{eq:MNPexp} is defined as $1/(2\pi)$ multiplied by the logarithm of the logarithmic capacity of $\omega$. Moreover, expansions for the capacity for the case of several small inclusions can be deduced from the corresponding expansion of the capacitary potential obtained in Maz'ya, Movchan, and Nieves~\cite[\S 3.2.2]{MaMoNi13}.

A different method to analyze the asymptotic behavior of functionals in perforated domains is the so-called {\it Functional Analytic Approach} proposed by Lanza de Cristoforis in  \cite{La02,La04}. The goal of this method is to represent functionals in singularly perturbed domains by means of real analytic maps and possibly singular but explicitly known functions of the singular perturbation parameter (see Dalla Riva, Lanza de Cristoforis, and Musolino \cite{DaLaMu21} for an introduction). As far as the condenser capacity in dimension two is concerned, by  Lanza de Cristoforis \cite{La02,La04}, we know that there exist {$\varepsilon_1>0$}
and a real analytic function $\mathcal{R}$ from $]-\varepsilon_1,\varepsilon_1[$ to $]0,+\infty[$ such that
\[
\mathrm{Cap}_{\Omega}(\varepsilon\overline{\omega})=-\frac{2\pi }{\log \varepsilon + \log \mathcal{R}[\varepsilon]} \qquad \forall \varepsilon \in ]0,\varepsilon_1[\, .
\]

Such a result implies that there exists a real analytic map $\tilde{\mathcal{R}}$ from a neighborhood of $(0,0)$ in $\mathbb{R}^2$ with values in  $\mathbb{R}$ such that
\[
\mathrm{Cap}_{\Omega}(\varepsilon\overline{\omega})=\tilde{\mathcal{R}}\Big[\varepsilon, \frac{1}{\log \varepsilon}\Big] \, ,
\]
for $\varepsilon$ positive and close to $0$. As a consequence, we have that
\[
\mathrm{Cap}_{\Omega}(\varepsilon\overline{\omega})=\sum_{(k,l) \in \mathbb{N}^2} \gamma_{(k,l)}\varepsilon^k \Big(\frac{1}{\log \varepsilon}\Big)^l \, ,
\]
for $\varepsilon$ positive and small enough, where the double power series $\sum_{(k,l) \in \mathbb{N}^2} \gamma_{(k,l)}x_1^k x_2^l$ converges for $(x_1,x_2)$ in a neighborhood of $(0,0)$. Moreover, in Lanza de Cristoforis \cite{La08}, the Functional Analytic Approach has been applied to the capacity $\mathrm{Cap}_{\Omega}(\varepsilon\overline{\omega})$ in $\mathbb{R}^d$ with $d \geq 2$ and has shown the existence of two real analytic maps $V_1$, $V_2$ from a neighborhood of $0$ to $\mathbb{R}$ such that
\begin{equation}\label{eq:capLanza}
\mathrm{Cap}_{\Omega}(\varepsilon\overline{\omega})=\frac{-1}{V_1[\varepsilon]+V_2[\varepsilon]\Upsilon_d[\varepsilon]}
\end{equation}
for $\varepsilon$ positive and small enough, with  $\Upsilon_d[\varepsilon]$ defined as
\[
{\Upsilon_d[\varepsilon] \equiv}
\left\{
\begin{array}{ll}
\frac{1}{2\pi} \log \varepsilon & \text{if $d=2$}\, ,\\
\frac{1}{(2-d)s_d}  \varepsilon^{2-d} & \text{if $d\geq 3$}\, ,\\
\end{array}
\right.
\]
where $s_d$ denotes the $(d-1)$-dimensional measure of the unit sphere in $\mathbb{R}^d$. In dimension $d\geq 3$ formula \eqref{eq:capLanza} implies the possibility of representing $\mathrm{Cap}_{\Omega}(\varepsilon\overline{\omega})$ as a convergent power series in $\varepsilon$, even though the explicit computation of the coefficients of the series is not available.

In \cite{AbBoLeMu21, AbLeMu22}, we have improved the above mentioned two-dimensional results in two directions: first we have considered a generalization of the classical capacity (the $u$-capacity), then we have explicitly computed
{some of the coefficients of the}
series representing the $u$-capacity. This result has then been applied to the asymptotic behavior of simple and multiple eigenvalues of the Laplacian in perforated {domains} in dimension $d=2$. The purpose of the present paper is to consider the behavior of (generalized) capacities in dimension  three and higher together with its 
application to the  asymptotic behavior of Dirichlet-Laplacian eigenvalues.

We wish to stress that to obtain an asymptotic expansion of $\mathrm{Cap}_\Omega(\varepsilon\overline{\omega},u^a,u^b)$, we follow the lines of the computations of  \cite{AbBoLeMu21, AbLeMu22}. Although the strategy is similar, some changes need to be taken into account due to the different dimension (here $d\geq 3$, whereas $d=2$ in  \cite{AbBoLeMu21, AbLeMu22}). Since they deserve some attention and care, we decided to provide these computations in the present paper.

We do believe that explicit results in dimension $d\geq 3$ will be useful both to people interested in the behavior of capacities and to those interested {in} the asymptotic analysis of eigenvalues of the Laplacian in perforated domains. To the best of our knowledge, the explicit and constructive series expansion we obtain is new even for  the classical capacity.

We assume that $\Omega$ and $\omega$ are {admissible domains in the sense of Definition \ref{def:domain}}. 
These regularity assumptions on $\Omega$ and $\omega$ are convenient for the Functional Analytic Approach we adopt, although they can be relaxed to Lipschitz regularity as it has been done in Costabel, Dalla Riva, Dauge, and Musolino \cite{CoDaDaMu17}

\begin{defn}\label{d:function} Let $\Omega$ be an admissible domain. We call  $u$ an \emph{admissible function} if 
\begin{enumerate}
	\item $u\in H^1(\Omega)$;
	\item $u$ is real-analytic in a neighborhood of $0$. 
\end{enumerate}
\end{defn}

Our first main result, Theorem \ref{capk}, deals with the value of  $\mathrm{Cap}_\Omega(\varepsilon \overline{\omega},u^a,u^b)$ 
for $\varepsilon$ close to $0$. We  reformulate Theorem \ref{capk} in Theorem \ref{t:0} below. For the proof, we refer to Section \ref{sec5}.

\begin{theorem}\label{t:0}
Let $\Omega$, $\omega$ be admissible domains and let $u^a,u^b$ be admissible functions. Then there exist  $\varepsilon^\#_\mathrm{c}$ positive and small enough and a sequence $\{c^\#_n\}_{n\in \mathbb{N}}$ of real numbers such that 
\[
\mathrm{Cap}_\Omega(\varepsilon \overline{\omega},u^a,u^b)=\varepsilon^{d-2}\sum_{n=0}^\infty c^\#_n \varepsilon^n
\]
for all $\varepsilon\in]0,\varepsilon^\#_\mathrm{c}[$. 
\end{theorem}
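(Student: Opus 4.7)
The plan is to follow the Functional Analytic Approach of \cite{AbBoLeMu21, AbLeMu22} while replacing the logarithmic fundamental solution with the Newtonian one $\Gamma_d(x)=\frac{1}{(2-d)s_d}|x|^{2-d}$. Concretely, one first writes the potentials $V_{\varepsilon\overline{\omega},u^a}$ and $V_{\varepsilon\overline{\omega},u^b}$ by means of layer potentials on the boundaries $\partial(\varepsilon\omega)$ and $\partial\Omega$, and then reads $\mathrm{Cap}_\Omega(\varepsilon\overline{\omega},u^a,u^b)$ from the Dirichlet pairing $\int_\Omega \nabla V_{\varepsilon\overline{\omega},u^a}\cdot\nabla V_{\varepsilon\overline{\omega},u^b}\,dx$, which by integration by parts and the boundary conditions can in turn be rewritten purely in terms of the unknown densities living on the two fixed boundaries $\partial\omega$ and $\partial\Omega$.

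The central step is to encode $V_{\varepsilon\overline{\omega},u}$ as the sum of a harmonic extension (or known explicit term) coming from $u$ and a correction produced by an appropriate ansatz involving two densities $(\mu_\varepsilon,\theta_\varepsilon)$: a single-layer-type density on $\partial\omega$ scaled through $x=\varepsilon y$, and a boundary density on $\partial\Omega$. Performing the substitution $x=\varepsilon y$ converts the system into boundary integral equations posed on the fixed domains $\partial\omega$ and $\partial\Omega$, and it brings out the natural factors of $\varepsilon$: because $\Gamma_d(\varepsilon y-\varepsilon z)=\varepsilon^{2-d}\Gamma_d(y-z)$ and the surface measure on $\varepsilon\partial\omega$ contributes $\varepsilon^{d-1}$, the cross-interaction between the two boundaries carries explicit positive powers of $\varepsilon$ rather than singular logarithmic factors. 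Expanding also $u^a$ and $u^b$ in their Taylor series at $0$ (using Definition \ref{d:function}), one obtains a system of the schematic form $M(\varepsilon)[\mu_\varepsilon,\theta_\varepsilon]^\top=F(\varepsilon)$ between Schauder spaces $C^{1,\alpha}$ on the fixed boundaries, in which both $M(\varepsilon)$ and $F(\varepsilon)$ are real-analytic in $\varepsilon$ in a whole real neighborhood of $0$, and $M(0)$ is an isomorphism corresponding to the uncoupled interior Newtonian problem on $\mathbb{R}^d\setminus\overline{\omega}$ and exterior problem on $\Omega$.

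Once this is set up, the analytic implicit function theorem (in the spirit of Lanza de Cristoforis \cite{La08}) yields a real-analytic map $\varepsilon\mapsto(\mu_\varepsilon,\theta_\varepsilon)$, which, plugged back into the representation formula and into the bilinear form defining $\mathrm{Cap}_\Omega(\varepsilon\overline{\omega},u^a,u^b)$, produces a convergent power series in $\varepsilon$. The overall prefactor $\varepsilon^{d-2}$ then appears from the scaling: the leading contribution is the Newtonian term, consistent with $\mathrm{Cap}_{\mathbb{R}^d}(\varepsilon\overline{\omega})=\varepsilon^{d-2}\mathrm{Cap}_{\mathbb{R}^d}(\overline{\omega})$ and with the fact that $u^a(0)u^b(0)$ drives the leading coefficient. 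The main obstacle, and the reason these calculations deserve a dedicated treatment rather than a direct translation from the two-dimensional case, is verifying the real-analytic dependence of the integral operator on $\varepsilon$ uniformly up to $\varepsilon=0$: this requires splitting the boundary integral kernels carefully so that the terms with potentially singular behavior at $\varepsilon=0$ (those in which $\Gamma_d(\varepsilon y-\varepsilon z)$ or its normal derivative appears) are isolated, rescaled, and recognized as analytic in $\varepsilon$, and so that the remaining cross-terms between $\partial\omega$ and $\partial\Omega$ (where $\Gamma_d$ is evaluated at well-separated points) can be expanded in convergent Taylor series in $\varepsilon$ around $0$. With this analyticity secured and $M(0)$ shown to be invertible on the chosen Schauder spaces, Theorem \ref{t:0} follows, with coefficients $c_n^\#$ expressible through model problems on $\mathbb{R}^d\setminus\overline{\omega}$ and $\Omega$ and through the Taylor coefficients of $u^a$, $u^b$ at the origin.
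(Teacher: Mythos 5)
Your proposal is correct and follows essentially the same route as the paper: you represent the capacitary potentials via layer potentials on the fixed boundaries $\partial\Omega$ and $\partial\omega$, rescale $x=\varepsilon y$ to extract the explicit powers $\varepsilon^{2-d}$ (from the Newtonian kernel) and $\varepsilon^{d-1}$ (from the surface measure), invoke the Functional Analytic Approach and the analytic implicit function theorem in the spirit of Lanza de Cristoforis \cite{La08} to obtain real-analytic dependence of the densities on $\varepsilon$, and then recover the capacity—through integration by parts—as a power series in $\varepsilon$ whose first $d-2$ coefficients vanish. This matches the paper's construction (Propositions \ref{rhoeps}–\ref{thetaeps}, their power-series expansions in Propositions \ref{rhok}–\ref{thetak}, and the assembly in Theorems \ref{umk}–\ref{capk}), the only cosmetic difference being that the paper splits the density system into two successive equations $M[\varepsilon,\cdot,\cdot]=0$ and $\Lambda[\varepsilon,\cdot,\cdot]=0$ rather than a single coupled system $M(\varepsilon)[\mu_\varepsilon,\theta_\varepsilon]^\top=F(\varepsilon)$.
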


In addition, we are able to give a simple description of the first few terms in the above series expansion. For an admissible function $u$ which is not identically zero in a neighborhood of $0$, we denote by $\kappa(u){\in \mathbb{N}}$ 
the order of vanishing of $u$ at $0$, so that $D^\gamma u(0)=0$  for all $|\gamma| <\kappa(u)$ and $D^\beta u(0)\neq 0$ for some $\beta \in \mathbb{N}^d$ with $|\beta|=\kappa(u)$ {(we use the standard multi-index notation throughout the paper)}. We define the \emph{principal part} of $u$ by
 \begin{equation*}
   u_\#(x)\equiv\sum_{\beta\in\N^d,\\ |\beta|=\kappa(u)}
\frac1{\beta!}{D^{\beta}u(0)x^{\beta}},
 \end{equation*}
 so that $u_\#$ is a homogeneous polynomial of degree $\kappa(u)$. {Note that this includes the case when $\kappa(u)=0$, {\it i.e.,} $u(0)\neq 0$. Then $u_\#=u(0)$.} We finally denote by $U$ the unique function, continuous in $\R^d\setminus{\omega}$,  which solves the exterior boundary value problem
 \begin{equation}\label{eq:Uext}
\left\{
\begin{array}{ll}
\Delta U=0&\text{ in }\R^d \setminus \overline\omega\,,\\
U=u_\# &\text{ on }\partial\omega\,,\\
U(x)\to0&\text{ for }|x|\to\infty\,.
\end{array}
\right.
\end{equation}
We now assume that the admissible functions $u^a,{u^b}$ have finite orders of vanishing $\kappa(u^a)=\overline{k}_a$ and $\kappa(u^b)=\overline{k}_b$ at $0$. We write
\begin{equation*}
\mathfrak C\left(\omega,(u^a)_\#,\,(u^b)_\#\right)\equiv\int_{\R^d\setminus\overline\omega}\nabla U^a\cdot\nabla U^b\,dx+\int_{\omega}\nabla u^a_\#\cdot\nabla u^b_\#\,dx\, ,
\end{equation*}
where, for $l=a,b$,  the function $u^l_\#$ is the principal part of $u^l$  and  the function $U^l$ is the solution of problem \eqref{eq:Uext} with $u_\#$ replaced by $u^l_\#$.
Then, ${c^\#_n}=0$ for all $n<\overline{k}_a+\overline{k}_b$ and
\begin{equation*}
	{c^\#_{\overline{k}_a+\overline{k}_b}}=\mathfrak C\left(\omega,(u^a)_\#,\,(u^b)_\#\right).
\end{equation*}
When $u$ is a single admissible function, with finite order of vanishing $\kappa(u)=\overline{k}$ at $0$, we simplify notation further by writing 
\begin{equation*}
	\mathfrak C(\omega,u_\#)\equiv \mathfrak C(\omega,u_\#,u_\#),
\end{equation*}
and we note that 
\begin{equation}\label{eq:frakC}
\mathfrak C(\omega,u_\#)=\int_{\R^d\setminus\overline\omega}\left|\nabla U\right|^2\,dx+\int_{\omega}\left|\nabla u_\#\right|^2\,dx
\end{equation}
is strictly positive. {In particular, $\mathrm{Cap}_\Omega(\varepsilon \overline{\omega},u)$ is asymptotic to $\mathfrak C(\omega,u_\#)\varepsilon^{2\overline{k}+d-2}$.}

{We then apply} the above 
results to the asymptotic behavior of eigenvalues in perforated domains. Our main results on this 
problem are the following Theorems \ref{t:1} and \ref{t:2} (see Theorems \ref{thm:eig1}, \ref{thm:eig2}, and \ref{thm:orderEVs}
{for more detailed statements}). We begin with Theorem \ref{t:1} which is a reformulation of  Theorems \ref{thm:eig1} and \ref{thm:eig2} and which deals with the case when the $N$-th eigenvalue in the unperturbed set $\Omega$ is simple. For the proof, we refer to Subsection \ref{subsec:simple}.
\begin{theorem}\label{t:1} 
Let $\lambda_N(\Omega)$ be a simple eigenvalue of the Dirichlet-Laplacian in 
{an admissible domain} $\Omega$. Let $u_N$ be a $L^2(\Omega)$-normalized eigenfunction associated to $\lambda_N(\Omega)$. Then
\begin{equation*}
\lambda_N(\Omega\setminus (\varepsilon \overline{\omega}))=\lambda_N(\Omega)+(u_N(0))^2\mathrm{Cap}_{{\mathbb{R}^d}}(\overline{\omega})
\varepsilon^{d-2}+o\Big(\varepsilon^{d-2}\Big)\qquad \text{as $\varepsilon \to 0^+$}\, {.}
\end{equation*}
Moreover, let $\overline{k}$ be the order of vanishing {of $u_N$} at $0$. 
Then 
\begin{equation*}
\lambda_N(\Omega\setminus (\varepsilon \overline{\omega}))=\lambda_N(\Omega)+\mathfrak C\left(\omega,(u_N)_\#\right)\varepsilon^{2\overline{k}+d-2}+o\Big(\varepsilon^{2\overline{k}+d-2}\Big)\qquad \text{as $\varepsilon \to 0^+$}\, {.}
\end{equation*}
\end{theorem}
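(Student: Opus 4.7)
The plan is to combine the asymptotic formula for simple eigenvalues recalled in the Introduction (due to Courtois \cite{Co95} and Abatangelo, Felli, Hillairet, and L\'ena \cite{AbFeHiLe19}), namely
\[
\lambda_N(\Omega \setminus (\varepsilon\overline{\omega})) = \lambda_N(\Omega) + \mathrm{Cap}_\Omega(\varepsilon\overline{\omega}, u_N) + o\bigl(\mathrm{Cap}_\Omega(\varepsilon\overline{\omega}, u_N)\bigr) \qquad \text{as } \varepsilon \to 0^+,
\]
with the series representation of the generalized $u$-capacity supplied by Theorem \ref{t:0} (equivalently Theorem \ref{capk}), specialised to $u^a = u^b = u_N$. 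As a preliminary observation, interior elliptic regularity applied to $-\Delta u_N = \lambda_N(\Omega) u_N$ shows that $u_N$ is real-analytic in a neighborhood of $0 \in \Omega$, so $u_N$ is an admissible function in the sense of Definition \ref{d:function}; by unique continuation for elliptic operators the order of vanishing $\overline{k} = \kappa(u_N)$ at $0$ is a well-defined nonnegative integer, so Theorem \ref{t:0} indeed applies.

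For the refined (second) identity, Theorem \ref{t:0} with $u^a = u^b = u_N$ yields
\[
\mathrm{Cap}_\Omega(\varepsilon\overline{\omega}, u_N) = \mathfrak{C}\bigl(\omega, (u_N)_\#\bigr)\, \varepsilon^{2\overline{k}+d-2} + O(\varepsilon^{2\overline{k}+d-1}),
\]
and since $\mathfrak{C}(\omega, (u_N)_\#) > 0$ by \eqref{eq:frakC}, plugging this into the Courtois formula gives exactly the claimed expansion. For the first identity I would distinguish two cases. If $u_N(0) \neq 0$, then $\overline{k} = 0$, $(u_N)_\# \equiv u_N(0)$ is a nonzero constant with $\int_\omega |\nabla (u_N)_\#|^2\, dx = 0$, and by linearity and uniqueness of the exterior Dirichlet problem \eqref{eq:NewtoncapK} the solution $U$ with datum $u_N(0)$ on $\partial\omega$ equals $u_N(0)\, W_{\overline{\omega}}$, where $W_{\overline{\omega}}$ is the Newtonian capacitary potential. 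Hence
\[
\mathfrak{C}\bigl(\omega, (u_N)_\#\bigr) = (u_N(0))^2 \int_{\mathbb{R}^d \setminus \overline{\omega}} |\nabla W_{\overline{\omega}}|^2\, dx = (u_N(0))^2 \mathrm{Cap}_{\mathbb{R}^d}(\overline{\omega}),
\]
and the first identity coincides with the refined one. If instead $u_N(0) = 0$, then $\overline{k} \geq 1$, the refined identity already gives $\mathrm{Cap}_\Omega(\varepsilon\overline{\omega}, u_N) = o(\varepsilon^{d-2})$, and $(u_N(0))^2 \mathrm{Cap}_{\mathbb{R}^d}(\overline{\omega}) = 0$, so the first identity holds trivially.

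The only nontrivial ingredient is Theorem \ref{t:0} itself, which encodes the full potential-theoretic construction developed in this paper; conditional on that, the present statement amounts to combining the series expansion with a well-known eigenvalue perturbation formula and identifying the leading coefficient in the harmonic case $\overline{k} = 0$ with the Newtonian capacity of $\overline{\omega}$. The only delicate point I would flag is the bookkeeping of the error term: one must check that $o(\mathrm{Cap}_\Omega(\varepsilon\overline{\omega}, u_N))$ really is $o(\varepsilon^{2\overline{k}+d-2})$, which follows since the leading coefficient $\mathfrak{C}(\omega, (u_N)_\#)$ is strictly positive and so controls the capacity from above and below.
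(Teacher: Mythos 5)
Your proposal is correct and follows essentially the same route as the paper: combine the Courtois/Abatangelo--Felli--Hillairet--L\'ena perturbation formula (Theorem~\ref{thm:asy:eig}) with the capacity expansion of Theorem~\ref{t:0} (Theorem~\ref{capk}/Remark~\ref{rem:capepsfirst}), identify the leading coefficient via the Newtonian capacitary potential when $\overline{k}=0$ (the paper's Remark~\ref{rem:k0}), and note the error bookkeeping is justified by the strict positivity of $\mathfrak C(\omega,(u_N)_\#)$. The only organizational difference is that you derive the first identity as a corollary of the second via a case split on $u_N(0)$, whereas the paper proves the first directly from \eqref{eq:capepsfirst} and the second separately under a vanishing assumption, but the ingredients are identical.
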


Theorem \ref{t:2} here below is instead concerned with the case when $\lambda_N(\Omega)$ is an eigenvalue of multiplicity $m>1$. {The case of multiple eigenvalues is considered in Theorem \ref{thm:orderEVs} and its proof can be found in Subsection \ref{subsec:multiple}.
}

{
\begin{theorem}\label{t:2}Let $\lambda_N(\Omega)$ be an eigenvalue of multiplicity $m>1$  and let $E(\lambda_N(\Omega))$ be the associated eigenspace. There exists an orthonormal basis of $E(\lambda_N(\Omega))$,
\[v_1,\dots,v_i,\dots,v_m,\]
such that, for $1\le i\le m$,
\begin{equation*}
	\lambda_{N-1+i}(\Omega\setminus (\varepsilon \overline{\omega}))=\lambda_N(\Omega)+
\mathrm{Cap}_\Omega(\varepsilon \overline{\omega},v_i)+o\left(\mathrm{Cap}_\Omega(\varepsilon \overline{\omega},v_i)\right)\mbox{ as }\eps\to0^+\, {.}
\end{equation*}
Moreover, we have
\begin{equation*}
	\lambda_{N-1+i}(\Omega\setminus (\varepsilon \overline{\omega}))=\lambda_N(\Omega)+\mu_i\varepsilon^{2\overline{k}_i+d-2}+o\left(\varepsilon^{2\overline{k}_i+d-2}\right)\mbox{ as }\eps\to0^+\, {,}
\end{equation*}
where $\overline{k}_i$ is the order of vanishing of $v_i$ at $0$ and 
\begin{equation*}
	\mu_j=\mathfrak C\left(\omega,(v_i)_\#\right)>0.
\end{equation*}
\end{theorem}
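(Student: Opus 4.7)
I would first invoke the abstract perturbation result from~\cite{AbFeHiLe19}, used in the two-dimensional case in~\cite{AbLeMu22}, to reduce the eigenvalue problem to a finite-dimensional one. For any $L^2$-orthonormal basis $u_1,\ldots,u_m$ of $E(\lambda_N(\Omega))$, the splitting of $\lambda_N(\Omega)$ is governed, to leading order, by the symmetric positive-semidefinite $m\times m$ matrix $M(\varepsilon) = \bigl(\mathrm{Cap}_\Omega(\varepsilon\overline{\omega}, u_a, u_b)\bigr)_{a,b=1}^m$. By Theorem~\ref{t:0}, each entry admits the asymptotic expansion $M_{ab}(\varepsilon) = \mathfrak{C}(\omega,(u_a)_\#,(u_b)_\#)\,\varepsilon^{\kappa(u_a)+\kappa(u_b)+d-2} + o\bigl(\varepsilon^{\kappa(u_a)+\kappa(u_b)+d-2}\bigr)$, so the full matrix has a well-controlled leading behavior.

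To construct the basis $v_1,\ldots,v_m$ adapted to the problem, I observe that every nonzero $u\in E(\lambda_N(\Omega))$ has finite order of vanishing $\kappa(u)$ at $0$, by real-analyticity and unique continuation. Setting $F_k = \{u\in E(\lambda_N(\Omega)) : \kappa(u)\geq k\}\cup\{0\}$ gives a finite decreasing filtration $F_0\supseteq F_1\supseteq\cdots\supseteq\{0\}$. On each stratum $F_k\ominus F_{k+1}$ (the $L^2$-orthogonal complement of $F_{k+1}$ in $F_k$), the principal-part map $u\mapsto u_\#$ is injective, so $u\mapsto\mathfrak{C}(\omega,u_\#)$ is strictly positive definite on the stratum by~\eqref{eq:frakC}. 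I would diagonalize this quadratic form with respect to the $L^2$-inner product on each stratum and concatenate the resulting eigendirections, ordering them so that $\overline{k}_1\geq\overline{k}_2\geq\cdots\geq\overline{k}_m$ and so that $\mu_i=\mathfrak{C}(\omega,(v_i)_\#)$ is increasing within each stratum. This arrangement puts the candidate leading terms $\mu_i\varepsilon^{2\overline{k}_i+d-2}$ in increasing order in $\varepsilon$, matching the ordering of the perturbed eigenvalues $\lambda_{N-1+i}(\Omega_\varepsilon)$.

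In this basis, the diagonal blocks of $M(\varepsilon)$ associated to each stratum are diagonal at leading order with entries $\mu_i\varepsilon^{2\overline{k}_i + d - 2}$, while the off-diagonal blocks between strata at orders $k<k'$ are of the intermediate scale $\varepsilon^{k+k'+d-2}$. A careful recursive min--max / block-perturbation analysis, organized from the stratum of largest $\overline{k}$ downwards, then shows that the $i$-th eigenvalue of $M(\varepsilon)$ is $\mu_i\varepsilon^{2\overline{k}_i+d-2}(1+o(1))$. Combined with the reduction above and with the identity $M_{ii}(\varepsilon) = \mathrm{Cap}_\Omega(\varepsilon\overline{\omega}, v_i)$, both displayed equations of the theorem follow; positivity of $\mu_i$ is immediate from~\eqref{eq:frakC}.

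The main technical obstacle is precisely this final spectral analysis of $M(\varepsilon)$: because its entries have genuinely distinct asymptotic scales in $\varepsilon$, a naive perturbation argument controls only the largest eigenvalue, and one must propagate the analysis down to the slowest decaying ones. The stratum-wise diagonalization performed in the basis construction is what makes the recursive argument succeed, ensuring that the inter-stratum couplings contribute only lower-order corrections to each $\mu_i$. This strategy parallels the one employed in~\cite{AbLeMu22} in dimension two, adapted here to the new exponents $2\overline{k}_i + d - 2$ produced by the $d\geq 3$ fundamental solution of the Laplacian via Theorem~\ref{t:0}.
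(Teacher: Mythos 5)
Your proposal is correct and follows essentially the same route as the paper: you build the same order decomposition (your filtration $F_k\ominus F_{k+1}$ coincides, by the uniqueness in Proposition~\ref{prop:DecompES}, with the paper's $E_1\oplus\dots\oplus E_p$), diagonalize $\mathcal Q$ stratum by stratum, and appeal to the same recursive/block spectral analysis for the multiscale capacity matrix $M(\varepsilon)$. The paper short-circuits exactly the step you flag as the main technical obstacle by citing that the proof of Theorem~\ref{thm:orderEVs} is identical to that of \cite[Th.~1.17]{AbLeMu22}, so your sketch and the paper's proof coincide in substance.
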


\begin{rem} In the previous theorem, the non-increasing finite sequence of integers $\left(\overline{k}_i\right)$ and the non-decreasing finite sequence of positive numbers $(\mu_i)$ are obviously independent of the choice of a basis $(v_i)$ satisfying the properties, by uniqueness of  the asymptotic expansion.
\end{rem}
}

The paper is organized as follows. In Section \ref{s:intcap} we rewrite 
the boundary value problem associated to the generalization of the capacity in terms of integral equations, exploiting classical potential theory. In Sections \ref{sec3} and \ref{sec4} we obtain series expansions for the solutions of the integral equations and for an auxiliary function.
In Section \ref{sec5}, we deduce our main result on the series expansion of the generalized capacity $\mathrm{Cap}_\Omega(\varepsilon\overline{\omega},u^a,u^b)$ {for $\varepsilon$ close to $0$.} { In Section  \ref{s:vanish}, we compute the principal term of the asymptotic expansion of $\mathrm{Cap}_\Omega(\varepsilon \overline{\omega},u^a,u^b)$ under vanishing assumption for $u^a$ and $u^b$.} In Section \ref{sec6} we outline the proofs of Theorem \ref{t:1} and Theorem \ref{t:2}.
Moreover, we supplement
the present paper with a blow-up analysis for the $u$-capacity in the Appendix. It shows more clearly that the first term of the series expansion provided in Section \ref{sec5} can be seen as a suitable capacity in the whole space $\R^d$.

\section{Integral equation formulation of capacitary potentials}\label{s:intcap}

\subsection{Preliminaries and classical notions of potential theory}\label{prel1}

In this paper we consider the dimension 
\[
d\in \mathbb{N}\setminus \{0,1,2\}
\] 
and we study the asymptotic behavior of $\mathrm{Cap}_\Omega(\varepsilon\overline{\omega},u^a,u^b)$ as $\varepsilon \to 0$. To do so, we assume some smoothness  on the sets and on the functions $u^a$ and $u^b$. 
We work in the frame of Schauder classes and thus we assume that both $\Omega$ and $\omega$ are admissible domains in the sense of Definition \ref{def:domain}. We can obviously find a common Schauder class $C^{1,\alpha}$ to which they belong, up to taking a smaller $\alpha$.
Since $\Omega$ is open and contains $0$, and since $\omega$ bounded, it is clear that there exists
  $\varepsilon_\#$ such that
\begin{equation}\label{eq:varepssharp}
\varepsilon_\#>0\ \mathrm{and\ }\ \varepsilon\overline{\omega} \subseteq \Omega\ \mathrm{for\ all}\ \varepsilon\in]-\varepsilon_\#,\varepsilon_\#[\,.
\end{equation}  
To define our perforated domain, we set
\[
\Omega_\varepsilon\equiv\Omega\setminus (\eps\overline\omega)
\qquad\quad\forall\varepsilon\in]-\varepsilon_\#,\varepsilon_\#[\,.
\]
Clearly, $\Omega_\varepsilon$ is an open bounded connected subset of $\mathbb{R}^{d}$ of class $C^{1,\alpha}$ for all $\varepsilon\in]-\varepsilon_\#,\varepsilon_\#[\setminus\{0\}$. The boundary
$\partial \Omega_\varepsilon$ of $\Omega_\varepsilon$ is the union of  $\partial \Omega$ and $\partial (\varepsilon \omega)=\varepsilon\partial\omega$, for all
$\varepsilon\in]-\varepsilon_\#,\varepsilon_\#[\setminus \{0\}$. For $\varepsilon=0$, we have $\Omega_0=\Omega\setminus\{0\}$.  We also need some regularity on the functions $u^a, u^b$: {we assume that $u^a,u^b$ are admissible in the sense of Definition \ref{d:function}.}

Our goal is to provide accurate and explicit expansions for $\mathrm{Cap}_\Omega(\varepsilon\overline{\omega},u^a,u^b)$ in terms of $\varepsilon$, with particular emphasis on the 
{influence} of the geometry and the data of the problem on such formulas.

Our strategy will be the same of  \cite{AbBoLeMu21, AbLeMu22}, where we adopted the Functional Analytic Approach of Lanza de Cristoforis \cite{La02, La04} for the analysis of singularly perturbed boundary value problems (see Dalla Riva, Lanza de Cristoforis, and Musolino \cite{DaLaMu21} for a detailed presentation). This approach permits to deduce a representation of the solution or related functionals as {real-analytic} maps, and thus as convergent power series. 

To analyze $\mathrm{Cap}_\Omega(\varepsilon\overline{\omega},u^a,u^b)$, we modify the techniques of \cite{AbBoLeMu21,AbLeMu22}, where we considered $\mathrm{Cap}_\Omega(\varepsilon\overline{\omega},u)$ and $\mathrm{Cap}_\Omega(\varepsilon\overline{\omega},u^a,u^b)$ but only in the planar case. Indeed, we emphasize that for considering $d\geq 3$ some modifications need to be done, as it is customary when using  Potential Theory.

By the analyticity of $u^a$ and $u^b$ {(see 
Definition \ref{d:function})} and analyticity results for the composition operator (see  B\"{o}hme and Tomi~\cite[p.~10]{BoTo73}, 
Henry~\cite[p.~29]{He82}, Valent~\cite[Thm.~5.2, p.~44]{Va88}), we deduce that, possibly shrinking $\varepsilon_\#$, there exists two real analytic maps $U^a_\#$, $U^b_\#$ from $]-\varepsilon_\#,\varepsilon_\#[$ to $C^{1,\alpha}(\partial \omega)$ such that
\[
u^a(\varepsilon t)= U^a_\#[\varepsilon](t)\, ,\qquad u^b(\varepsilon t)= U^b_\#[\varepsilon](t) \, , \qquad \forall t \in \partial \omega\, ,\forall \varepsilon \in ]-\varepsilon_\#,\varepsilon_\#[
\]
(see Deimling~\cite[\S 15]{De85} for the 
definition and properties of analytic maps). Then for all $\varepsilon\in]-\varepsilon_\#,\varepsilon_\#[ \setminus\{0\}$, we denote by $u^a_\varepsilon$ and $u^b_\varepsilon$  the unique solutions in $C^{1,\alpha}(\overline{\Omega_\varepsilon})$ of the  problems
\begin{equation}\label{eq:direpsa}
\left\{
\begin{array}{ll}
\Delta u^a_\varepsilon=0&\text{ in }\Omega_\varepsilon\,,\\
u^a_\varepsilon(x)=0&\text{ for all }x\in\partial\Omega\,,\\
u^a_\varepsilon(x)=U^a_\#[\varepsilon](x/\varepsilon)&\text{ for all }x\in\varepsilon\partial\omega\,
\end{array}
\right.
\end{equation}
and
\[
\left\{
\begin{array}{ll}
\Delta u^b_\varepsilon=0&\text{ in }\Omega_\varepsilon\,,\\
u^b_\varepsilon(x)=0&\text{ for all }x\in\partial\Omega\,,\\
u^b_\varepsilon(x)=U^b_\#[\varepsilon](x/\varepsilon)&\text{ for all }x\in\varepsilon\partial\omega\,,
\end{array}
\right.
\]
respectively. By the Divergence Theorem, we see that
\[
\begin{split}
\mathrm{Cap}_\Omega(\varepsilon\overline{\omega},u^a,u^b)&=\int_{\Omega_\varepsilon}\nabla u^a_\varepsilon \cdot \nabla u^b_\varepsilon \, dx + \int_{\varepsilon \omega}\nabla u^a \cdot \nabla u^b \, dx\\
&= -\varepsilon^{d-2}\int_{\partial \omega}\nu_{\omega}(t)\cdot \nabla \Big(u^a_\varepsilon(\varepsilon t)\Big) u^b(\varepsilon t) \, d\sigma_t+ \varepsilon^d \int_{\omega}(\nabla u^a)(\varepsilon t) \cdot (\nabla u^b)(\varepsilon t) \, dt \, ,
\end{split}
\]
for all $\varepsilon \in ]-\varepsilon_\#,\varepsilon_\#[\setminus \{0\}$, where $\nu_\omega$  denotes the outward unit normal to $\partial \omega$. 

In order to analyze the  solution to problem \eqref{eq:direpsa} as $\varepsilon \to 0$,  we adapt to the present problem the method developed in Dalla Riva, Musolino, and Rogosin \cite{DaMuRo15} for the solution of the Dirichlet problem in a planar perforated domain, which is based on potential theory. By exploiting some specific integral operators (the single and the double layer potentials) we convert a boundary value problem into a set of boundary integral equations.

To introduce the layer potentials, we denote by  $S_d$  the fundamental solution {of $\Delta\equiv \sum_{j=1}^d\partial_{j}^2$} in $\mathbb{R}^d$: {\it i.e.},
\[
S_d(x)\equiv\frac{1}{(2-d)s_d |x|^{d-2}}\qquad\forall x\in\mathbb{R}^d\setminus\{0\}\,.
\] 
Here $s_d$ denotes the $(d-1)$-dimensional measure of the unit sphere in $\mathbb{R}^d$. Now let $\mathcal{O}$ be an open bounded subset of $\mathbb{R}^d$ of class $C^{1,\alpha}$. If  $\phi\in C^{0,\alpha}(\partial\mathcal{O})$, then we denote by $v[\partial\mathcal{O},\phi]$ the single layer potential with density $\phi$:
\[
v[\partial\mathcal{O},\phi](x)\equiv\int_{\partial\mathcal{O}}\phi(y)S_d(x-y)\,d\sigma_y\qquad\forall x\in\mathbb{R}^d\, .
\] 
It is well known that $v[\partial\mathcal{O},\phi]$ is a continuous function from $\mathbb{R}^d$ to $\mathbb{R}$. The restriction $v^+[\partial\mathcal{O},\phi]\equiv v[\partial\mathcal{O},\phi]_{|\overline{\mathcal{O}}}$ belongs to $C^{1,\alpha}(\overline{\mathcal{O}})$. Moreover, if  we denote by $C^{1,\alpha}_{\mathrm{loc}}(\mathbb{R}^d\setminus\mathcal{O})$  the space of functions on $\mathbb{R}^d\setminus\mathcal{O}$ whose restrictions to $\overline{\mathcal{U}}$ belong to  $C^{1,\alpha}(\overline{\mathcal{U}})$ for all open bounded subsets $\mathcal{U}$ of $\mathbb{R}^d\setminus\mathcal{O}$, then $v^-[\partial\mathcal{O},\phi]\equiv v[\partial\mathcal{O},\phi]_{|\mathbb{R}^d\setminus\mathcal{O}}$  belongs to $C^{1,\alpha}_{\mathrm{loc}}(\mathbb{R}^d\setminus\mathcal{O})$. 

If $\psi\in C^{1,\alpha}(\partial\mathcal{O})$, we introduce  the double layer potential $w[\partial\mathcal{O},\psi]$ by setting
\[
w[\partial\mathcal{O},\psi](x)\equiv-\int_{\partial\mathcal{O}}\psi(y)\;\nu_{\mathcal{O}}(y)\cdot\nabla S_d(x-y)\,d\sigma_y\qquad\forall x\in\mathbb{R}^d\,,
\] 
where $\nu_\mathcal{O}$ denotes the outer unit normal to $\partial\mathcal{O}$. The restriction $w[\partial\mathcal{O},\psi]_{|\mathcal{O}}$ extends to a function $w^+[\partial\mathcal{O},\psi]$ of $C^{1,\alpha}(\overline{\mathcal{O}})$ and  the restriction $w[\partial\mathcal{O},\psi]_{|\mathbb{R}^d\setminus\overline{\mathcal{O}}}$ extends to a function $w^-[\partial\mathcal{O},\psi]$ of $C^{1,\alpha}_{\mathrm{loc}}(\mathbb{R}^d\setminus\mathcal{O})$.

To describe the boundary behavior  of the trace of the double layer potential on $\partial \mathcal{O}$ and of the normal derivative of the single layer potential, we define the boundary integral operators  $W_\mathcal{O}$ and $W^*_\mathcal{O}$:
\[
W_\mathcal{O}[\psi](x)\equiv -\int_{\partial\mathcal{O}}\psi(y)\;\nu_{\mathcal{O}}(y)\cdot\nabla S_d(x-y)\, d\sigma_y\qquad\forall x\in\partial\mathcal{O}\,,
\] for all $\psi\in C^{1,\alpha}(\partial\mathcal{O})$, and
\[
W^*_\mathcal{O}[\phi](x)\equiv \int_{\partial\mathcal{O}}\phi(y)\;\nu_{\mathcal{O}}(x)\cdot\nabla S_d(x-y)\, d\sigma_y\qquad\forall x\in\partial\mathcal{O}\,,
\] for all $\phi\in C^{0,\alpha}(\partial\mathcal{O})$. The operators $W_\mathcal{O}$ and $W^*_\mathcal{O}$  are compact operators  from   $C^{1,\alpha}(\partial\mathcal{O})$  to itself and from 
$C^{0,\alpha}(\partial\mathcal{O})$ to itself, respectively (see Schauder \cite{Sc31, Sc32}). Moreover,  $W_\mathcal{O}$ and $W^*_\mathcal{O}$ are adjoint one to the other with respect to the duality on $C^{1,\alpha}(\partial\mathcal{O})\times C^{0,\alpha}(\partial\mathcal{O})$ induced by the inner product of the Lebesgue space $L^2(\partial\mathcal{O})$ (cf., {\it e.g.}, Kress \cite[Chap.~4]{Kr14}).  We refer the reader to Kress \cite{Kr14} and Wendland \cite{We67,We70}, for the theory of dual systems and the corresponding Fredholm Alternative Principle. Moreover,
\begin{align*}
w^\pm[\partial\mathcal{O},\psi]_{|\partial\mathcal{O}}&=\pm\frac{1}{2}\psi+W_\mathcal{O}[\psi]&\forall\psi\in C^{1,\alpha}(\partial\mathcal{O})\,,\\
\nu_\mathcal{O}\cdot\nabla v^\pm[\partial\mathcal{O},\phi]_{|\partial\mathcal{O}}&=\mp\frac{1}{2}\phi+W^*_\mathcal{O}[\phi]&\forall\phi\in C^{0,\alpha}(\partial\mathcal{O})
\end{align*}
(see, {\it e.g.}, Folland \cite[Chap.~3]{Fo95}).

Finally, we shall need to consider the subspaces of $C^{0,\alpha}(\partial \mathcal{O})$ and of $C^{1,\alpha}(\partial \mathcal{O})$, consisting of functions with zero integral on $\partial \mathcal{O}$:
\begin{equation}\label{eq:_0}
C^{k,\alpha}(\partial \mathcal{O})_0\equiv \Bigg\{f \in C^{k,\alpha}(\partial \mathcal{O})\colon \int_{\partial \mathcal{O}}f\, d\sigma=0\Bigg\} \qquad \text{for $k=0,1$}\, .
\end{equation}

\subsection{An integral formulation of the boundary value problem}

To convert problem \eqref{eq:direpsa} into integral equations, we follow the idea of Lanza de Cristoforis \cite{La08} and of  Dalla Riva, Musolino, and Rogosin \cite{DaMuRo15}. Therefore, we proceed as in  \cite{DaMuRo15} and \cite{AbBoLeMu21, AbLeMu22} and we introduce the map $M\equiv(M^o,M^i,M^c)$  from $]-\varepsilon_\#,\varepsilon_\#[\times C^{0,\alpha}(\partial\Omega)\times C^{0,\alpha}(\partial\omega)$ to $C^{0,\alpha}(\partial\Omega)\times C^{0,\alpha}(\partial\omega)_0\times\mathbb{R}$   by setting
\begin{align*}
&M^o[\varepsilon,\rho^o,\rho^i](x) \equiv \frac{1}{2}\rho^o(x)+W^*_{\Omega}[\rho^o](x)+\int_{\partial\omega}\rho^i(s)\;\nu_{\Omega}(x)\cdot\nabla S_d(x-\varepsilon s)\, d\sigma_s&\forall x\in\partial\Omega\,,\\
&M^i[\varepsilon,\rho^o,\rho^i](t)\equiv\frac{1}{2}\rho^i(t)-W^*_{\omega}[\rho^i](t)-\varepsilon^{d-1} \int_{\partial\Omega}\rho^o(y)\;\nu_{\omega}(t)\cdot\nabla S_d(\varepsilon t- y)\, d\sigma_y&\forall t\in\partial\omega\,,\\
&M^c[\varepsilon,\rho^o,\rho^i]\equiv \int_{\partial\omega}\rho^i\, d\sigma-1\,,
\end{align*} 
for all $(\varepsilon,\rho^o,\rho^i) \in ]-\varepsilon_\#,\varepsilon_\#[\times C^{0,\alpha}(\partial\Omega)\times C^{0,\alpha}(\partial\omega)$. The space $C^{0,\alpha}(\partial\omega)_0$ is defined as in equation \eqref{eq:_0}, {\it i.e.},
\[
C^{0,\alpha}(\partial \omega)_0\equiv \Bigg\{f \in C^{0,\alpha}(\partial \omega)\colon \int_{\partial \omega}f\, d\sigma=0\Bigg\} \, .
\]

Then we can prove the following result of Lanza de Cristoforis \cite[\S3]{La08} (see also  Dalla Riva, Musolino, and Rogosin \cite[Prop.~4.1]{DaMuRo15}).

\begin{prop}\label{rhoeps}
The following statements hold.
\begin{itemize}
\item[(i)] The map $M$ is real analytic. 
\item[(ii)] If $\varepsilon\in[0,\varepsilon_\#[$, then there exists a unique pair $(\rho^o_{\geq}[\varepsilon],\rho^i_{\geq}[\varepsilon])\in C^{0,\alpha}(\partial\Omega)\times C^{0,\alpha}(\partial\omega)$ such that $M[\varepsilon,\rho^o_{\geq}[\varepsilon],\rho^i_{\geq}[\varepsilon]]=0$. \item[(iii)] There exist $\tilde{\varepsilon}_\rho\in ]0,\varepsilon_\#[$ and a real analytic map $(\rho^o[\cdot],\rho^i[\cdot])$ from $]-\tilde{\varepsilon}_\rho,\tilde{\varepsilon}_\rho[$ to $C^{0,\alpha}(\partial\Omega)\times C^{0,\alpha}(\partial\omega)$ such that
\[
M[\varepsilon,\rho^o[\varepsilon],\rho^i[\varepsilon]]=0 \qquad \forall \varepsilon \in ]-\tilde{\varepsilon}_\rho,\tilde{\varepsilon}_\rho[\, .
\]
In particular,
\[
(\rho^o[\varepsilon],\rho^i[\varepsilon])=(\rho^o_{\geq}[\varepsilon],\rho^i_{\geq}[\varepsilon])\qquad \forall \varepsilon \in [0,\tilde{\varepsilon}_\rho[\, .
\]
\end{itemize}
\end{prop}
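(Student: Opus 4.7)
The plan is to treat the three parts in order, borrowing heavily the potential-theoretic framework of Lanza de Cristoforis \cite{La08} and Dalla Riva, Musolino, and Rogosin \cite{DaMuRo15}. For part (i), I observe that $M$ is the sum of bounded linear operators and of two parameter-dependent integral operators. The operators $W^*_\Omega$ and $W^*_\omega$ are linear and bounded, hence real analytic. The remaining integral term in $M^o$ has kernel $\nu_\Omega(x)\cdot\nabla S_d(x-\varepsilon s)$; since $0\in\Omega$ and $\omega$ is bounded, for $\varepsilon\in\,]-\varepsilon_\#,\varepsilon_\#[$ the set $\varepsilon\overline{\omega}$ stays at positive distance from $\partial\Omega$, so this kernel is jointly real analytic in $(\varepsilon,x,s)$. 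A symmetric statement holds for the coupling term in $M^i$. Standard results on real analyticity of integral operators with jointly analytic kernels between Schauder spaces (in the form recalled in \cite{DaLaMu21}) then show that $M$ is real analytic on its entire domain of definition.

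For part (ii), the key case is $\varepsilon=0$. When $\varepsilon=0$ the coupling in $M^i$ vanishes thanks to the prefactor $\varepsilon^{d-1}$, and in $M^o$ the kernel $\nabla S_d(x-\varepsilon s)$ reduces to $\nabla S_d(x)$, so the integral factors as $\nu_\Omega(x)\cdot\nabla S_d(x)\int_{\partial\omega}\rho^i\,d\sigma$, which by $M^c[0,\rho^o,\rho^i]=0$ equals $\nu_\Omega(x)\cdot\nabla S_d(x)$. The system thus decouples into $(\tfrac12 I-W^*_\omega)\rho^i=0$ together with the normalization $\int_{\partial\omega}\rho^i\,d\sigma=1$, and $(\tfrac12 I+W^*_\Omega)\rho^o=-\nu_\Omega\cdot\nabla S_d$. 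The kernel of $\tfrac12 I-W^*_\omega$ is one-dimensional and spanned by the interior equilibrium density of $\omega$ (its single layer is harmonic in $\omega$ with vanishing interior Neumann trace, hence constant by connectedness), so the normalization isolates a unique $\rho^i$. In dimension $d\ge 3$, the operator $\tfrac12 I+W^*_\Omega$ is an isomorphism of $C^{0,\alpha}(\partial\Omega)$: any element of its kernel yields, via single layer, a harmonic function in $\R^d\setminus\overline\Omega$ with zero exterior Neumann trace that decays at infinity, hence vanishes; combined with zero Dirichlet data inside $\Omega$, this forces the density to vanish. For $\varepsilon\in\,]0,\varepsilon_\#[$ the existence and uniqueness for the full system follow by viewing $M[\varepsilon,\cdot,\cdot]$ as a compact perturbation of the $\varepsilon=0$ system and applying the Fredholm alternative, as implemented in \cite[Prop.~4.1]{DaMuRo15}.

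For part (iii), I will invoke the implicit function theorem for real analytic maps between Banach spaces, for instance in the form of Deimling \cite[Thm.~15.3]{De85}. Real analyticity of $M$ comes from (i) and the existence of a zero at $\varepsilon=0$ from (ii). It remains to verify that the partial differential $\partial_{(\rho^o,\rho^i)}M[0,\rho^o_{\geq}[0],\rho^i_{\geq}[0]]$ is a linear homeomorphism from $C^{0,\alpha}(\partial\Omega)\times C^{0,\alpha}(\partial\omega)$ onto $C^{0,\alpha}(\partial\Omega)\times C^{0,\alpha}(\partial\omega)_0\times\R$. Since $M[\varepsilon,\cdot,\cdot]$ is affine in $(\rho^o,\rho^i)$, this differential is the linear part of $M[0,\cdot,\cdot]$, and its injectivity reduces exactly to the analysis of (ii): any $(h^o,h^i)$ in its kernel must satisfy $\int_{\partial\omega}h^i\,d\sigma=0$ together with $(\tfrac12 I-W^*_\omega)h^i=0$, forcing $h^i=0$, and then $(\tfrac12 I+W^*_\Omega)h^o=0$ forces $h^o=0$. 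Surjectivity then follows from the Fredholm alternative. The implicit function theorem supplies $\tilde\varepsilon_\rho>0$ and a real analytic branch $\varepsilon\mapsto(\rho^o[\varepsilon],\rho^i[\varepsilon])$ of zeros of $M$, whose identification with $(\rho^o_{\geq}[\varepsilon],\rho^i_{\geq}[\varepsilon])$ on $[0,\tilde\varepsilon_\rho[$ is enforced by the uniqueness proven in (ii). I expect the main technical obstacle to lie in part (i), namely in carefully establishing the joint analyticity of the kernel-dependent maps between Schauder spaces, a step handled by the machinery of the references already cited.
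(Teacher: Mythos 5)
The paper gives no independent proof of Proposition~\ref{rhoeps}: it just attributes the result to Lanza de Cristoforis~\cite[\S3]{La08} and Dalla Riva, Musolino, and Rogosin~\cite[Prop.~4.1]{DaMuRo15}. Your reconstruction follows exactly the standard functional--analytic route of those references, and all three parts are organized as they would be there: joint analyticity of the kernel operators for (i); the decoupling of the system at $\varepsilon=0$, together with the characterization of $\ker\left(\tfrac12 I - W^*_\omega\right)$ and the invertibility of $\tfrac12 I + W^*_\Omega$ in dimension $d\ge 3$, for (ii); and the analytic implicit function theorem, with invertibility of the partial differential reduced to the $\varepsilon=0$ case, for (iii). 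Given that the paper defers entirely to the literature here, your sketch is a faithful and essentially correct filling-in of that material.

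One point to tighten: in (ii), for $\varepsilon\in\,]0,\varepsilon_\#[$ you write that existence and uniqueness ``follow by viewing $M[\varepsilon,\cdot,\cdot]$ as a compact perturbation of the $\varepsilon=0$ system and applying the Fredholm alternative.'' Compactness of the $\varepsilon$-dependent off-diagonal terms does show that the linear part of $M[\varepsilon,\cdot,\cdot]$ is Fredholm of index $0$ for each $\varepsilon$, but the alternative does not by itself select the favourable branch: you still have to verify that the homogeneous system has only the trivial solution for each fixed $\varepsilon>0$. This injectivity is not a perturbative statement (it must hold on all of $[0,\varepsilon_\#[$, not just near $0$); it is obtained in the references by relating a nontrivial element of the kernel to a harmonic function in $\Omega\setminus\varepsilon\overline{\omega}$ and in $\mathbb{R}^d\setminus\overline{\Omega}$ with vanishing traces, and then invoking uniqueness for the Dirichlet problem and the decay at infinity available when $d\ge3$. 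You do cite \cite[Prop.~4.1]{DaMuRo15}, which handles precisely this, but the sentence as written gives the impression that compactness alone settles it. A one-line insertion making the boundary-value-problem uniqueness explicit would close the gap.
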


As explained in \cite{La08}, to represent the solution $u_\varepsilon$ to problem \eqref{eq:direpsa}, we need to consider a further operator and thus we define the map
$\Lambda\equiv(\Lambda^o,\Lambda^i)$  from $]-\varepsilon_\#,\varepsilon_\#[\times  C^{1,\alpha}(\partial\Omega)\times C^{1,\alpha}(\partial\omega)_0$ to $C^{1,\alpha}(\partial\Omega)\times C^{1,\alpha}(\partial\omega)$  by
\begin{align*}
&\Lambda^o[\varepsilon,\theta^o,\theta^i](x)\equiv\frac{1}{2}\theta^o(x)+W_{\Omega}[\theta^o](x) \\
&\qquad\qquad\qquad\qquad +\varepsilon^{d-1}\int_{\partial\omega}\theta^i(s)\;\nu_{\omega}(s)\cdot\nabla S_d(x-\varepsilon s)\, d\sigma_s&\forall x\in\partial\Omega\,,\\
&\Lambda^i[\varepsilon,\theta^o,\theta^i](t)\equiv\frac{1}{2}\theta^i(t)-W_{\omega}[\theta^i](t)+w[\partial\Omega,\theta^o](\varepsilon t)\\
\nonumber
&\qquad\qquad\qquad\qquad -U^a_\#[\varepsilon](t)+\int_{\partial\omega}U^a_\#[\varepsilon]\rho^i[\varepsilon]\,d\sigma&\forall t\in\partial\omega\, ,
\end{align*} 
for all $(\varepsilon,\theta^o,\theta^i) \in ]-\varepsilon_\#,\varepsilon_\#[\times  C^{1,\alpha}(\partial\Omega)\times C^{1,\alpha}(\partial\omega)_0$. Then we have the following result of Lanza de Cristoforis \cite[\S4]{La08} on  $\Lambda$ (cf.~Dalla Riva, Musolino, and Rogosin \cite[Prop.~4.3]{DaMuRo15}).

\begin{prop}\label{thetaeps} 
The following statements hold.
\begin{itemize}
\item[(i)] The map $\Lambda$ is real analytic.
\item[(ii)] If $\varepsilon\in [0,\varepsilon_\#[$, then there exists a unique pair $(\theta^o_\geq[\varepsilon],\theta^i_\geq[\varepsilon])\in C^{1,\alpha}(\partial\Omega)\times C^{1,\alpha}(\partial\omega)_0$ such that $\Lambda[\varepsilon,\theta^o_\geq[\varepsilon],\theta^i_\geq[\varepsilon]]=0$.
\item[(iii)] There exist $\tilde{\varepsilon}_\theta \in ]0,\varepsilon_\#[$ and a real analytic map $(\theta^o[\cdot],\theta^i[\cdot])$ from $]-\tilde{\varepsilon}_\theta,\tilde{\varepsilon}_\theta[$ to $C^{1,\alpha}(\partial\Omega)\times C^{1,\alpha}(\partial\omega)_0$ such that
\[
\Lambda[\varepsilon,\theta^o[\varepsilon],\theta^i[\varepsilon]]=0 \qquad \forall \varepsilon \in ]-\tilde{\varepsilon}_\theta,\tilde{\varepsilon}_\theta[\, .
\]
In particular,
\[
(\theta^o[\varepsilon],\theta^i[\varepsilon])=(\theta^o_{\geq}[\varepsilon],\theta^i_{\geq}[\varepsilon])\qquad \forall \varepsilon \in [0,\tilde{\varepsilon}_\theta[\, .
\]
\end{itemize}
\end{prop}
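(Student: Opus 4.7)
The strategy follows the Functional Analytic Approach of Lanza de Cristoforis \cite{La08}, combining mapping properties of layer potentials with the real-analytic Implicit Function Theorem (see Deimling \cite{De85}). The three main ingredients are: (a) real analyticity of $\Lambda$, reducing to analyticity of each summand; (b) identification of the unique solution at $\varepsilon=0$ via the Fredholm Alternative; and (c) a reduction to a single equation, followed by the real-analytic IFT.

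For part (i), the operators $\tfrac{1}{2}I+W_\Omega$ and $\tfrac{1}{2}I-W_\omega$ are continuous linear and hence real analytic. The $\varepsilon$-dependent kernels in $\int_{\partial\omega}\theta^i(s)\,\nu_\omega(s)\cdot\nabla S_d(x-\varepsilon s)\,d\sigma_s$ and $w[\partial\Omega,\theta^o](\varepsilon t)$ are jointly real analytic, since by the choice of $\varepsilon_\#$ in \eqref{eq:varepssharp} their arguments stay uniformly bounded away from the origin; standard analyticity results for such parameter-dependent integral operators (cf.\ \cite{DaLaMu21}) apply. The maps $\varepsilon\mapsto U^a_\#[\varepsilon]$ and $\varepsilon\mapsto\rho^i[\varepsilon]$ are real analytic by admissibility of $u^a$ (together with the composition results recalled in the excerpt) and by Proposition \ref{rhoeps}, so the term $\int_{\partial\omega}U^a_\#[\varepsilon]\,\rho^i[\varepsilon]\,d\sigma$ is analytic as well.

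For part (ii) at $\varepsilon=0$, the two components decouple: $\Lambda^o[0,\theta^o,\theta^i]=(\tfrac{1}{2}I+W_\Omega)[\theta^o]$, and since $\Omega$ is admissible in $\R^d$ with $d\geq 3$, a classical argument based on the jump relations together with uniqueness of the interior Dirichlet problem and of the exterior Neumann problem with decay at infinity shows that $\tfrac{1}{2}I+W_\Omega$ is an isomorphism of $C^{1,\alpha}(\partial\Omega)$; hence $\theta^o_\geq[0]=0$. The second equation reads
\[
\bigl(\tfrac{1}{2}I-W_\omega\bigr)[\theta^i]=U^a_\#[0]-\int_{\partial\omega}U^a_\#[0]\,\rho^i[0]\,d\sigma,
\]
with $\tfrac{1}{2}I-W_\omega$ on $C^{1,\alpha}(\partial\omega)$ having kernel equal to the constants and range equal to $\{g:\int_{\partial\omega}g\,\rho^i[0]\,d\sigma=0\}$ (by the Fredholm Alternative and duality with $W^*_\omega$, using that $\rho^i[0]$ is the unit-integral generator of $\ker(\tfrac{1}{2}I-W^*_\omega)$ supplied by Proposition \ref{rhoeps}). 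The correction term is chosen precisely so that the right-hand side lies in this range; the zero-mean condition in $C^{1,\alpha}(\partial\omega)_0$ then determines $\theta^i_\geq[0]$ uniquely. For $\varepsilon\in\,]0,\varepsilon_\#[$, existence and uniqueness of $(\theta^o_\geq[\varepsilon],\theta^i_\geq[\varepsilon])$ are obtained from the one-to-one correspondence between solutions of $\Lambda=0$ and representations of the unique solution $u^a_\varepsilon$ of \eqref{eq:direpsa} as a sum of a double-layer potential on $\partial\Omega$ with density $\theta^o$, a double-layer potential on $\varepsilon\partial\omega$ derived from $\theta^i$, and a constant realised through the single-layer potential of the rescaling of $\rho^i[\varepsilon]$ (as in \cite{La08, DaMuRo15}).

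For part (iii), since $\partial_{\theta^o}\Lambda^o[0,0,\theta^i_\geq[0]]=\tfrac{1}{2}I+W_\Omega$ is invertible, the IFT applied to $\Lambda^o=0$ yields a real-analytic map $\Theta^o[\varepsilon,\theta^i]$ defined in a neighbourhood of $(0,\theta^i_\geq[0])$ with $\Lambda^o[\varepsilon,\Theta^o[\varepsilon,\theta^i],\theta^i]=0$. Substituting into $\Lambda^i$ produces the reduced real-analytic map $\tilde\Lambda[\varepsilon,\theta^i]:=\Lambda^i[\varepsilon,\Theta^o[\varepsilon,\theta^i],\theta^i]$. The main obstacle is the mismatch between the domain $C^{1,\alpha}(\partial\omega)_0$ and the codomain $C^{1,\alpha}(\partial\omega)$ of $\tilde\Lambda[\varepsilon,\cdot]$: the partial differential $\partial_{\theta^i}\tilde\Lambda[0,\theta^i_\geq[0]]=\tfrac{1}{2}I-W_\omega$ has image of codimension one. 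The resolution---verified directly by pairing $\tilde\Lambda[\varepsilon,\theta^i]$ with $\rho^i[\varepsilon]$ and using the equations $M^i[\varepsilon,\cdot,\cdot]=0$ and $\int_{\partial\omega}\rho^i[\varepsilon]\,d\sigma=1$ from Proposition \ref{rhoeps}---is that $\tilde\Lambda[\varepsilon,\cdot]$ takes values in an analytically-varying codimension-one subspace of $C^{1,\alpha}(\partial\omega)$, onto which the differential at $(0,\theta^i_\geq[0])$ restricts to a topological isomorphism. The real-analytic IFT in this restricted codomain then yields $\tilde\varepsilon_\theta>0$ and a real-analytic map $\theta^i[\cdot]$; setting $\theta^o[\varepsilon]:=\Theta^o[\varepsilon,\theta^i[\varepsilon]]$ completes the analytic branch, and the agreement with $(\theta^o_\geq[\cdot],\theta^i_\geq[\cdot])$ on $[0,\tilde\varepsilon_\theta[$ follows from the uniqueness in part (ii).
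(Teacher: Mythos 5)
The paper itself gives no proof of Proposition \ref{thetaeps}: it is stated as ``the following result of Lanza de Cristoforis \cite[\S4]{La08}'' and a reference is given to \cite[Prop.~4.3]{DaMuRo15}. Your reconstruction follows the same Functional Analytic Approach those sources use (analyticity of integral operators, Fredholm theory on $C^{k,\alpha}$ spaces via the dual system $(W_\omega,W^*_\omega)$, and a real-analytic Implicit Function Theorem), and it is, in substance, correct: the identification of $\tfrac{1}{2}I+W_\Omega$ as an isomorphism when $d\ge 3$ and $\R^d\setminus\overline\Omega$ is connected, the Fredholm argument for $\tfrac12 I-W_\omega$ and the cancellation $\int_{\partial\omega}\rho^i[0]\,d\sigma=1$ making the $\varepsilon=0$ right-hand side lie in the range, and the triangular reduction to a single equation in $\theta^i$ are all in order.

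The genuinely delicate point in part (iii) is the one you singled out: after eliminating $\theta^o$ the derivative $\tfrac12 I-W_\omega$ maps $C^{1,\alpha}(\partial\omega)_0$ onto a codimension-one subspace only. Your fix — that on the zero set of $\Lambda^o$ the value of $\Lambda^i$ automatically satisfies $\int_{\partial\omega}\Lambda^i\,\rho^i[\varepsilon]\,d\sigma=0$, so one may apply the IFT with codomain $Y_\varepsilon=\{g:\int_{\partial\omega}g\,\rho^i[\varepsilon]\,d\sigma=0\}$ — does work. However, the identity that makes it work is the reciprocity relation
\[
\int_{\partial\omega}\Lambda^i[\varepsilon,\theta^o,\theta^i]\,\rho^i[\varepsilon]\,d\sigma
+\int_{\partial\Omega}\Lambda^o[\varepsilon,\theta^o,\theta^i]\,\rho^o[\varepsilon]\,d\sigma=0\,,
\]
which holds for all $(\varepsilon,\theta^o,\theta^i)$, and its verification requires \emph{all three} components $M^o=M^i=M^c=0$ of Proposition \ref{rhoeps}: $M^i=0$ to rewrite $\int_{\partial\omega}(\tfrac12\theta^i-W_\omega[\theta^i])\rho^i[\varepsilon]\,d\sigma$, $M^c=0$ to cancel the two $\int_{\partial\omega}U^a_\#[\varepsilon]\rho^i[\varepsilon]\,d\sigma$ contributions, and $M^o=0$ to transform $\int_{\partial\Omega}\theta^o(\tfrac12\rho^o[\varepsilon]+W^*_\Omega[\rho^o[\varepsilon]])\,d\sigma$. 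You invoke only $M^i=0$ and $\int_{\partial\omega}\rho^i[\varepsilon]\,d\sigma=1$; the $M^o=0$ equation must also be cited, since without it there is no reason for the pairing $\int_{\partial\omega}\tilde\Lambda[\varepsilon,\theta^i]\,\rho^i[\varepsilon]\,d\sigma$ to vanish. It is also worth making explicit how one passes from ``values in the $\varepsilon$-dependent hyperplane $Y_\varepsilon$'' to a fixed codomain for the IFT: composing with the projection $P:g\mapsto g-(\int_{\partial\omega}g\,\rho^i[0]\,d\sigma)\mathbf{1}$, which maps onto $Y_0$ and restricts to a bijection $Y_\varepsilon\to Y_0$ for $\varepsilon$ small, reduces to the standard real-analytic IFT with target $Y_0$.
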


We now set
\[
\varepsilon_0\equiv \min \{\tilde{\varepsilon}_\rho,\tilde{\varepsilon}_\theta\}\, .
\]

By summing suitable double and single layer potentials, by a modification of the argument of Dalla Riva, Musolino, and Rogosin \cite[Prop.~4.5]{DaMuRo15}, we can represent  the rescaled function $u_\varepsilon(\varepsilon t)$ by means of the functions $\rho^o[\varepsilon]$, $\rho^i[\varepsilon]$, $\theta^o[\varepsilon]$, and $\theta^i[\varepsilon]$ introduced in Propositions \ref{rhoeps} and \ref{thetaeps} (see also Lanza de Cristoforis \cite[\S5]{La08} and  Dalla Riva, Musolino, and Rogosin \cite[\S 2.4]{DaMu13}).

\begin{prop}\label{solution}
Let $\varepsilon\in]0,\varepsilon_0[$. Then
\[
\begin{split}
u^a_\varepsilon(\varepsilon t)& \equiv w^+[\partial\Omega,\theta^o[\varepsilon]](\varepsilon t)-w^-[\partial\omega,\theta^i[\varepsilon]](t)\\
&
+\int_{\partial\omega}U^a_\#[\varepsilon]\rho^i[\varepsilon]\,d\sigma \biggl(v^+[\partial\Omega,\rho^o[\varepsilon]](\varepsilon t) +\varepsilon^{-(d-2)}v^-[\partial\omega,\rho^i[\varepsilon]](t)\biggr)\\
&
\times\biggl(\frac{1}{\int_{\partial\omega}d\sigma}\int_{\partial\omega}v[\partial\Omega,\rho^o[\varepsilon]](\varepsilon s)+\varepsilon^{-(d-2)}v[\partial\omega,\rho^i[\varepsilon]](s)\,d\sigma_s\biggr)^{-1}\\
& = w^+[\partial\Omega,\theta^o[\varepsilon]](\varepsilon t)-w^-[\partial\omega,\theta^i[\varepsilon]](t)\\
&
+\int_{\partial\omega}U^a_\#[\varepsilon]\rho^i[\varepsilon]\,d\sigma \biggl(\varepsilon^{d-2}v^+[\partial\Omega,\rho^o[\varepsilon]](\varepsilon t) +v^-[\partial\omega,\rho^i[\varepsilon]](t)\biggr)\\
&
\times\biggl( \frac{1}{\int_{\partial\omega}d\sigma}\int_{\partial\omega}\varepsilon^{d-2}v[\partial\Omega,\rho^o[\varepsilon]](\varepsilon s)+v[\partial\omega,\rho^i[\varepsilon]](s)\,d\sigma_s\biggr)^{-1}
\end{split}
\] 
for all $t\in\overline{(\varepsilon^{-1}\Omega)}\setminus\omega$.
\end{prop}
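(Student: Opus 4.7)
My plan is to denote by $\tilde u^a_\varepsilon(t)$ the right-hand side of the asserted identity (in either of its two equivalent forms) and to prove that the function $x\mapsto \tilde u^a_\varepsilon(x/\varepsilon)$ solves the Dirichlet problem \eqref{eq:direpsa}; uniqueness of that problem then yields $\tilde u^a_\varepsilon(t)=u^a_\varepsilon(\varepsilon t)$. Harmonicity in $\Omega_\varepsilon$ is automatic, since single and double layer potentials are harmonic off their supports and this property is preserved by the change of variable $x=\varepsilon t$. Only the two boundary conditions require work, and for those I would combine the defining equations $M[\varepsilon,\rho^o[\varepsilon],\rho^i[\varepsilon]]=0$ and $\Lambda[\varepsilon,\theta^o[\varepsilon],\theta^i[\varepsilon]]=0$ from Propositions \ref{rhoeps} and \ref{thetaeps} with the standard jump formulas recalled in Section \ref{prel1}.

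The crucial intermediate object is the auxiliary combined single-layer function
\[
V_\varepsilon(x)\equiv v[\partial\Omega,\rho^o[\varepsilon]](x) + \varepsilon^{-(d-2)}v[\partial\omega,\rho^i[\varepsilon]](x/\varepsilon),
\]
which is harmonic in $\mathbb{R}^d\setminus(\partial\Omega\cup\varepsilon\partial\omega)$. Read via the jump relation for the normal derivative of the single layer, $M^o=0$ says that the outward normal derivative of $V_\varepsilon$ on $\partial\Omega$ vanishes from outside; since $d\geq 3$ forces $V_\varepsilon$ to decay at infinity, an energy argument (or uniqueness for the exterior Neumann problem) yields $V_\varepsilon\equiv 0$ on $\mathbb{R}^d\setminus\Omega$, and in particular $V_\varepsilon=0$ on $\partial\Omega$. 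Analogously, $M^i=0$ forces the interior normal derivative of $V_\varepsilon$ on $\varepsilon\partial\omega$ to vanish, so harmonicity in $\varepsilon\omega$ and interior Neumann uniqueness give $V_\varepsilon\equiv\mathcal B_\varepsilon$ on $\overline{\varepsilon\omega}$ for some constant $\mathcal B_\varepsilon$. This constant is nonzero: were it zero, continuity of the single layer across $\partial\Omega$ and $\varepsilon\partial\omega$ would yield $V_\varepsilon\equiv 0$ on $\mathbb{R}^d$, and the jumps of its normal derivative would then force $\rho^o[\varepsilon]=0$ and $\rho^i[\varepsilon]=0$, contradicting the normalization $\int_{\partial\omega}\rho^i[\varepsilon]\,d\sigma=1$ given by $M^c=0$. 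Since $V_\varepsilon(\varepsilon t)=\mathcal B_\varepsilon$ is constant in $t\in\partial\omega$, its average on $\partial\omega$ equals $\mathcal B_\varepsilon$, and this is precisely the denominator appearing in the formula.

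With this in hand, verifying the boundary conditions is essentially mechanical. On $\partial\Omega$, after rewriting $w^-[\partial\omega,\theta^i[\varepsilon]](x/\varepsilon)$ as a double layer on $\varepsilon\partial\omega$ via the homogeneity of $\nabla S_d$, the double-layer contribution to the trace equals exactly $\Lambda^o[\varepsilon,\theta^o[\varepsilon],\theta^i[\varepsilon]]$ and so vanishes, while the single-layer contribution vanishes because $V_\varepsilon=0$ on $\partial\Omega$. On $\varepsilon\partial\omega$, the jump relation for $w^-[\partial\omega,\theta^i[\varepsilon]]$ combined with $\Lambda^i=0$ gives that the double-layer part of $\tilde u^a_\varepsilon(t)$ at $t\in\partial\omega$ equals $U^a_\#[\varepsilon](t)-\int_{\partial\omega}U^a_\#[\varepsilon]\rho^i[\varepsilon]\,d\sigma$, while the single-layer part equals the constant $\int_{\partial\omega}U^a_\#[\varepsilon]\rho^i[\varepsilon]\,d\sigma$ (using $V_\varepsilon(\varepsilon t)=\mathcal B_\varepsilon$ and the cancellation with the denominator). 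The two terms add up to $U^a_\#[\varepsilon](t)=u^a(\varepsilon t)$, completing the verification; uniqueness for \eqref{eq:direpsa} then closes the proof. The main obstacle I anticipate is the careful bookkeeping of the powers of $\varepsilon$: the scalings $S_d(\varepsilon z)=\varepsilon^{-(d-2)}S_d(z)$ and $\nabla S_d(\varepsilon z)=\varepsilon^{-(d-1)}\nabla S_d(z)$ produce different factors in the single- and double-layer terms when passing between $x$ and $t=x/\varepsilon$, and these must be matched against the $\varepsilon$-dependence already built into the operators $M$ and $\Lambda$; the only genuinely analytic input beyond this is the nonvanishing of $\mathcal B_\varepsilon$.
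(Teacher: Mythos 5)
Your proposal is correct and follows essentially the same route as the paper, which leaves this verification to the cited sources (Lanza de Cristoforis \cite[\S5]{La08} and Dalla Riva, Musolino, and Rogosin \cite[Prop.~4.5]{DaMuRo15}): one checks that the proposed layer-potential combination, rescaled by $x=\varepsilon t$, solves the Dirichlet problem \eqref{eq:direpsa}, with the integral equations $M=0$ and $\Lambda=0$ delivering the boundary conditions via the jump relations, and uniqueness for \eqref{eq:direpsa} concluding. Your identification of the auxiliary single-layer function $V_\varepsilon$ as the ``capacitary'' block, the use of exterior/interior Neumann uniqueness to show $V_\varepsilon\equiv 0$ in $\R^d\setminus\Omega$ and $V_\varepsilon\equiv\mathcal B_\varepsilon$ in $\overline{\varepsilon\omega}$, and the non-vanishing of $\mathcal B_\varepsilon$ via the normalization $\int_{\partial\omega}\rho^i[\varepsilon]\,d\sigma=1$, are exactly the ingredients in the Functional Analytic Approach references and are all correctly executed; the $\varepsilon$-power bookkeeping you flag ($S_d$ homogeneous of degree $2-d$, $\nabla S_d$ of degree $1-d$) matches the factors $\varepsilon^{d-1}$ already built into $M^i$ and $\Lambda^o$.
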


\section{Power series expansions of the auxiliary functions  $(\rho^o[\varepsilon],\rho^i[\varepsilon])$ and $(\theta^o[\varepsilon],\theta^i[\varepsilon])$ around $\varepsilon=0$}\label{sec3}

As in \cite{AbLeMu22}, now the plan is to construct an expansion for $\nu_{\omega}(t)\cdot \nabla \Big(u^a_\varepsilon(\varepsilon t)\Big) u^b(\varepsilon t)$ and then to integrate it on $\partial \omega$. To do so, we first obtain a representation of  the auxiliary {density} functions  $(\rho^o[\varepsilon],\rho^i[\varepsilon])$ and $(\theta^o[\varepsilon],\theta^i[\varepsilon])$ and then we use it to represent $\nu_{\omega}(t)\cdot \nabla \Big(u^a_\varepsilon(\varepsilon t)\Big) u^b(\varepsilon t)$.

To compute the coefficients of the series involved in the representation,  we need explicit expressions for the derivatives with respect to $\varepsilon$ of functions of the type $F(\varepsilon x)$. More precisely, we will exploit the equality
\begin{equation}\label{der.eq0}
\partial_\varepsilon^j(F(\varepsilon x))=\sum_{\substack{\beta \in \mathbb{N}^d\\|\beta|=j}}\frac{j!}{\beta!}x^\beta(D^{\beta}F)(\varepsilon x)
\end{equation} which holds for all ${j\in\mathbb{N}}$, $\varepsilon\in\mathbb{R}$, $x\in\mathbb{R}^d$, and for all functions $F$ analytic in a neighbourhood of $\varepsilon x$. Here, if $\alpha\in\mathbb{N}^d$, then $(D^\alpha F)(y)$ denotes the partial derivative of multi-index $\alpha$ with respect to $x$ of the function $F(x)$ evaluated at $y\in\mathbb{R}^d$. {We also exploit formulas for the derivatives of layer potentials in the singularly perturbed set $\Omega \setminus \varepsilon \overline{\omega}$ with respect to the parameter $\varepsilon$ as it is done in Dalla Riva, Luzzini and Musolino \cite{DaLuMu22}.}

Then we have the following  variant of  Dalla Riva, Musolino, and Rogosin \cite[Prop.~5.1]{DaMuRo15}, where we represent $(\rho^o[\varepsilon],\rho^i[\varepsilon])$ as a power series for $\varepsilon$ close to $0$.

\begin{prop}\label{rhok} Let $(\rho^o[\varepsilon],\rho^i[\varepsilon])$ be as in Proposition \ref{rhoeps} for all $\varepsilon\in]-\varepsilon_0,\varepsilon_0[$. Then there exist $\varepsilon_\rho\in]0,\varepsilon_0[$ and a sequence $\{(\rho^o_k,\rho^i_k)\}_{k\in\mathbb{N}}$ in $C^{0,\alpha}(\partial\Omega)\times C^{0,\alpha}(\partial\omega)$ such that 
\begin{equation}\label{rhok0}
\rho^o[\varepsilon]=\sum_{k=0}^{+\infty}\frac{\rho^o_k}{k!}\varepsilon^k\quad\text{ and }\quad\rho^i[\varepsilon]=\sum_{k=0}^{+\infty}\frac{\rho^i_k}{k!}\varepsilon^k\qquad \forall \varepsilon\in]-\varepsilon_\rho,\varepsilon_\rho[\,,
\end{equation}
where the two series converge normally  in $C^{0,\alpha}(\partial\Omega)$ and in $C^{0,\alpha}(\partial\omega)$, respectively, for $\varepsilon\in]-\varepsilon_\rho,\varepsilon_\rho[$. Moreover,  the pair of functions $(\rho^o_0,\rho^i_0)$ is the unique solution in $C^{0,\alpha}(\partial\Omega)
\times C^{0,\alpha}(\partial\omega)$ of the following system of integral equations
\begin{align}\nonumber
&\frac{1}{2}\rho^o_0(x)+W^*_{\Omega}[\rho^o_0](x)=-\nu_{\Omega}(x)\cdot\nabla S_d(x) & \forall x\in\partial\Omega\,,\\
\nonumber
&\frac{1}{2} \rho^i_0(t)-W^*_{\omega}[ \rho^i_0](t)=0 & \forall t\in\partial\omega\,,\\
\nonumber
&\int_{\partial\omega}\rho^i_0\, d\sigma=1\,, 
\end{align}
for each $k\in\{1,\dots,d-2\}$ the pair of functions $(\rho^o_k,\rho^i_k)$ is the unique solution in $C^{0,\alpha}(\partial\Omega)
\times C^{0,\alpha}(\partial\omega)$ of the following system of integral equations
\begin{align}\nonumber
&\frac{1}{2}\rho^o_k(x)+W^*_{\Omega}[\rho^o_k](x)\\
\nonumber
&\quad =(-1)^{k+1}\sum_{\substack{\beta \in \mathbb{N}^d\\|\beta|=k}}\frac{k!}{\beta!}\nu_{\Omega}(x)\cdot(\nabla D^\beta S)(x)\int_{\partial\omega}\rho^i_{0}(s)s^\beta\, d\sigma_s& \forall x\in\partial\Omega\,,\\
\nonumber
&\rho^i_k(t)=0 \qquad \forall t \in \partial \omega\, ,
\end{align}  
and for each $k\in\mathbb{N}\setminus\{0,\dots,d-2\}$ the pair $(\rho^o_k,\rho^i_k)$ is the unique solution in $C^{0,\alpha}(\partial\Omega)
\times C^{0,\alpha}(\partial\omega)$ of the following system of integral equations which involves $\{(\rho^o_j,\rho^i_j)\}_{j=0}^{k-1}$,
\begin{align}\nonumber
&\frac{1}{2}\rho^o_k(x)+W^*_{\Omega}[\rho^o_k](x)\\
\nonumber
&\quad =\sum_{j=0}^{k}\binom{k}{j}(-1)^{j+1}\sum_{\substack{\beta \in \mathbb{N}^d\\|\beta|=j}}\frac{j!}{\beta!}\nu_{\Omega}(x)\cdot(\nabla D^\beta S)(x)\int_{\partial\omega}\rho^i_{k-j}(s)s^\beta\, d\sigma_s \qquad \qquad \forall x\in\partial\Omega\,,\\
\nonumber
&\frac{1}{2} \rho^i_k(t)-W^*_{\omega}[ \rho^i_k](t)\\
\nonumber
&\quad =\frac{k!}{(k-(d-1))!}\sum_{j=0}^{k-(d-1)}\binom{k-(d-1)}{j}  \sum_{\substack{\beta \in \mathbb{N}^d\\|\beta|=j}}\frac{j!}{\beta!}(-1)^{j+1}t^\beta \nu_{\omega}(t)\cdot \int_{\partial\omega}\rho^o_{k-(d-1)-j} \;  \nabla D^\beta S_d\, d\sigma \\
\nonumber &\qquad \qquad  \qquad \qquad\forall t\in\partial\omega\,,\\
\nonumber
&\int_{\partial\omega}\rho^i_k\, d\sigma=0\,.
\end{align}  
\end{prop}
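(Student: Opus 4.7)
The plan is to reduce everything to Proposition \ref{rhoeps}. Part (iii) of that proposition gives real analyticity of $\varepsilon\mapsto(\rho^o[\varepsilon],\rho^i[\varepsilon])$ on a neighborhood of $0$, so these functions automatically admit convergent Taylor expansions as in \eqref{rhok0} on some interval $]-\varepsilon_\rho,\varepsilon_\rho[$ with $\varepsilon_\rho\in ]0,\tilde\varepsilon_\rho]$, provided we \emph{define} $\rho^o_k,\rho^i_k$ as $\partial_\varepsilon^k\rho^o[\varepsilon]|_{\varepsilon=0}$ and $\partial_\varepsilon^k\rho^i[\varepsilon]|_{\varepsilon=0}$ respectively. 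It then remains to verify that these coefficients satisfy the displayed integral equations.

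The strategy is to apply $\partial_\varepsilon^k$ at $\varepsilon=0$ to the identity $M[\varepsilon,\rho^o[\varepsilon],\rho^i[\varepsilon]]=0$. The operators $\frac12 I+W^*_{\Omega}$ and $\frac12 I-W^*_{\omega}$ are independent of $\varepsilon$, so they transfer directly to the coefficients. For the coupling integrals I would combine the Leibniz rule with formula \eqref{der.eq0} applied to $\nabla S_d(x-\varepsilon s)$ and to $\nabla S_d(\varepsilon t-y)$. In the first case one obtains $(-s)^\beta(\nabla D^\beta S_d)(x)$, while in the second one obtains $t^\beta(\nabla D^\beta S_d)(-y)=(-1)^{|\beta|+1}t^\beta(\nabla D^\beta S_d)(y)$ by the oddness of $\nabla S_d$, which is precisely where the sign $(-1)^{j+1}$ in the statement originates. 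The $M^c$ equation, after differentiation, yields $\int_{\partial\omega}\rho^i_0\,d\sigma=1$ and $\int_{\partial\omega}\rho^i_k\,d\sigma=0$ for $k\ge1$.

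Three regimes then emerge naturally. For $k=0$ only the $|\beta|=0$ term in \eqref{der.eq0} contributes to the $M^o$ identity, and $\int \rho^i_0\,d\sigma=1$ produces the stated $-\nu_\Omega(x)\cdot\nabla S_d(x)$ on the right-hand side. In the range $1\le k\le d-2$, the factor $\varepsilon^{d-1}$ in $M^i$ has vanishing $k$-th derivative at $0$, so the $\rho^i_k$ equation degenerates to the homogeneous system $\frac12\rho^i_k-W^*_\omega[\rho^i_k]=0$ with $\int\rho^i_k\,d\sigma=0$, forcing $\rho^i_k=0$; consequently, in the Leibniz expansion of the $\rho^o_k$ equation only the term with $k-j=0$ survives, yielding the stated formula. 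For $k\ge d-1$ the Leibniz identity gives $\partial_\varepsilon^k(\varepsilon^{d-1}G(\varepsilon))|_{\varepsilon=0}=\binom{k}{d-1}(d-1)!\,G^{(k-(d-1))}(0)=\frac{k!}{(k-(d-1))!}G^{(k-(d-1))}(0)$, and a further application of Leibniz together with \eqref{der.eq0} to $G$ produces the displayed right-hand side of the $\rho^i_k$ equation. The formula for $\rho^o_k$ is obtained in the same way as before without simplifications.

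Uniqueness of the pair $(\rho^o_k,\rho^i_k)$ at each step reduces to two well-known facts from potential theory in dimension $d\ge3$: the operator $\frac12 I+W^*_\Omega$ is an isomorphism on $C^{0,\alpha}(\partial\Omega)$ (exterior Dirichlet problem with decay at infinity), and $\frac12 I-W^*_\omega$ has a one-dimensional kernel on $C^{0,\alpha}(\partial\omega)$, transverse to the constraint hyperplane $\{\int\cdot\,d\sigma=\text{const}\}$, by Fredholm duality with $\frac12 I-W_\omega$ whose kernel in $C^{1,\alpha}(\partial\omega)$ consists of constants. I expect the main obstacle to be the careful combinatorial bookkeeping of the Leibniz-plus-\eqref{der.eq0} expansions, in particular tracking the interplay of the $\varepsilon^{d-1}$ prefactor in $M^i$ with the outer Leibniz sum and reconciling the parity sign coming from $\nabla S_d(-y)=-\nabla S_d(y)$ with the factor $(-1)^{j+1}$ in the target formulas.
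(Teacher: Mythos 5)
Your proposal reproduces the paper's argument essentially verbatim: invoke the real analyticity from Proposition \ref{rhoeps}(iii) to identify $(\rho^o_k,\rho^i_k)$ with Taylor coefficients, differentiate $M[\varepsilon,\rho^o[\varepsilon],\rho^i[\varepsilon]]=0$ via the Leibniz rule and \eqref{der.eq0}, split into the three regimes $k=0$, $1\le k\le d-2$, $k\ge d-1$ according to the vanishing of derivatives of the $\varepsilon^{d-1}$ prefactor in $M^i$, track the parity sign $\nabla D^\beta S_d(-y)=(-1)^{|\beta|+1}\nabla D^\beta S_d(y)$, and close with uniqueness from the standard Fredholm theory for $\frac12 I+W^*_\Omega$ and $\frac12 I- W^*_\omega$. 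The only slip is the parenthetical identifying $\frac12 I+W^*_\Omega$ with the \emph{Dirichlet} exterior problem, when it is in fact the exterior normal derivative of the single layer (hence the exterior \emph{Neumann} problem), but this is a labeling issue and does not affect the conclusion that the operator is an isomorphism for $d\ge3$.
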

\begin{proof}

By the real analyticity result of Proposition \ref{rhoeps} (iii) for the map
\[
\varepsilon \mapsto (\rho^o[\varepsilon],\rho^i[\varepsilon])\, ,
\] 
we deduce that there exist $\varepsilon_\rho$ and  $\{(\rho^o_k,\rho^i_k)\}_{k\in\mathbb{N}}$ such that \eqref{rhok0} holds. By the real analyticity of $\varepsilon \mapsto (\rho^o[\varepsilon],\rho^i[\varepsilon])$, we have $(\rho^o_k,\rho^i_k)=(\partial_\varepsilon^k\rho^o[0],\partial_\varepsilon^k\rho^i[0])$ for all   $k\in\mathbb{N}$.  Therefore our goal is to identify the derivatives $(\partial_\varepsilon^k\rho^o[0],\partial_\varepsilon^k\rho^i[0])$ for all   $k\in\mathbb{N}$. Equality $M[\varepsilon,\rho^o[\varepsilon],\rho^i[\varepsilon]]=0$ for all $\varepsilon\in]-\varepsilon_0,\varepsilon_0[$ (cf.~Proposition \ref{rhoeps} (ii)) implies that the map
\[
\varepsilon \mapsto M[\varepsilon,\rho^o[\varepsilon],\rho^i[\varepsilon]]
\] 
has zero derivatives, {\it i.e.},   
\begin{equation}\label{eq:Mder0}
\partial_\varepsilon^k(M[\varepsilon,\rho^o[\varepsilon],\rho^i[\varepsilon]])=0 \qquad \forall \varepsilon\in]-\varepsilon_0,\varepsilon_0[\, ,k\in\mathbb{N}\, .
\end{equation} 
Therefore we compute $ \partial_\varepsilon^k(M[\varepsilon,\rho^o[\varepsilon],\rho^i[\varepsilon]])$ and we have
\begin{align}\label{rhook'}
&\partial_\varepsilon^k(M^o[\varepsilon,\rho^o[\varepsilon],\rho^i[\varepsilon]])(x)=\frac{1}{2}\partial_\varepsilon^k \rho^o[\varepsilon](x)+W^*_{\Omega}[\partial_\varepsilon^k \rho^o[\varepsilon]](x)\\
\nonumber
&+\sum_{j=0}^{k}\binom{k}{j}(-1)^j\sum_{\substack{\beta \in \mathbb{N}^d\\|\beta|=j}}\frac{j!}{\beta!}\nu_{\Omega}(x)\cdot\int_{\partial\omega}\partial_\varepsilon^{k-j} \rho^i[\varepsilon](s)\;s^{\beta}(\nabla D^\beta S_d)(x-\varepsilon s)\, d\sigma_s\\ 
\nonumber
&\qquad\qquad\qquad\qquad\qquad\qquad\qquad\qquad\qquad\qquad\qquad\qquad\qquad\qquad\qquad \forall x\in\partial\Omega\,,\\
\label{rhoik1'}
&\partial_\varepsilon^k(M^i[\varepsilon,\rho^o[\varepsilon],\rho^i[\varepsilon]])(t)=\frac{1}{2}\partial_\varepsilon^k \rho^i[\varepsilon](t)-W^*_{\omega}[\partial_\varepsilon^k \rho^i[\varepsilon]](t)\\
\nonumber
\nonumber
&- \partial_\varepsilon^k\bigg(\varepsilon^{d-1} \int_{\partial\Omega}{\rho^o[\varepsilon](y)}\;\nu_{\omega}(t)\cdot\nabla S_d(\varepsilon t- y)\, d\sigma_y\bigg)\\
\nonumber
&\qquad\qquad\qquad\qquad\qquad\qquad\qquad\qquad\qquad\qquad\qquad\qquad\qquad\qquad\qquad \forall t\in\partial\omega\,,\\
\label{rhoik2'}
&\partial_\varepsilon^k (M^c[\varepsilon,\rho^o[\varepsilon],\rho^i[\varepsilon]])=\int_{\partial\omega}\partial_\varepsilon^k\rho^i[\varepsilon]\, d\sigma-\delta_{{0k}}\,,
\end{align} for all $\varepsilon\in]-\varepsilon_0,\varepsilon_0[$ and all $k\in\mathbb{N}$ (see also \eqref{der.eq0}). {Here above $\delta_{ij}$ denotes the Kronecker delta function.} 
Next we note that if $k<d-1$ then
\[
\partial_\varepsilon^k\bigg(\varepsilon^{d-1}\int_{\partial\Omega}\rho^o{[\varepsilon]}(y)\;\nu_{\omega}(t)\cdot\nabla S_d(\varepsilon t- y)\, d\sigma_y\bigg)_{|\varepsilon=0}=0 \qquad \forall t \in \partial \omega\, . 
\]
Instead,  if $k\geq d-1$ then
\[
\begin{split}
&\partial_\varepsilon^k\bigg(\varepsilon^{d-1}\int_{\partial\Omega}\rho^o[\varepsilon](y)\;\nu_{\omega}(t)\cdot\nabla S_d(\varepsilon t- y)\, d\sigma_y\bigg)_{|\varepsilon=0}\\
&=\binom{k}{(d-1)}(d-1)! \partial_\varepsilon^{k-(d-1)}\bigg(\int_{\partial\Omega}\rho^o[\varepsilon](y)\;\nu_{\omega}(t)\cdot\nabla S_d(\varepsilon t- y)\, d\sigma_y\bigg)_{|\varepsilon=0}\\
&=\frac{k!}{(k-(d-1))!}\sum_{j=0}^{k-(d-1)}\binom{k-(d-1)}{j}\\
&\qquad \qquad \times \sum_{\substack{\beta \in \mathbb{N}^d\\|\beta|=j}}\frac{j!}{\beta!} \int_{\partial\omega}\partial_\varepsilon^{k-(d-1)-j}\big(\rho^o[\varepsilon](y)\big)_{|\varepsilon=0}\; t^\beta \nu_{\omega}(t)\cdot\nabla D^\beta S_d(-y)\, d\sigma_y\\
&=\frac{k!}{(k-(d-1))!}\sum_{j=0}^{k-(d-1)}\binom{k-(d-1)}{j}\\
&\qquad \qquad \times \sum_{\substack{\beta \in \mathbb{N}^d\\|\beta|=j}}\frac{j!}{\beta!}(-1)^{|\beta|+1} \int_{\partial\omega}\partial_\varepsilon^{k-(d-1)-j}\big(\rho^o[\varepsilon](y)\big)_{|\varepsilon=0}\; t^\beta \nu_{\omega}(t)\cdot\nabla D^\beta S_d(y)\, d\sigma_y \qquad \forall t \in \partial \omega\, . 
\end{split}
\]
Then, by taking $\varepsilon=0$  in \eqref{rhook'}--\eqref{rhoik2'} and by equality \eqref{eq:Mder0}, we deduce that for each $k \in \mathbb{N}$ the pair of functions $(\rho^o_k,\rho^i_k)$ is a solution of the corresponding integral equations of the statement. By the characterization of the kernels of the operators of Dalla Riva, Lanza de Cristoforis, and Musolino \cite[\S 6.5 and \S 6.6]{DaLaMu21}, we deduce the uniqueness of the solutions of the integral equations of the statement. 
\end{proof}

By a similar computation, in the following proposition we construct the power series expansion of $(\theta^o[\varepsilon],\theta^i[\varepsilon])$.

\begin{prop}\label{thetak} Let $(\theta^o[\varepsilon],\theta^i[\varepsilon])$ be as in Proposition \ref{thetaeps} for all $\varepsilon\in]-\varepsilon_0,\varepsilon_0[$. Then there exist $\varepsilon_\theta\in]0,\varepsilon_0[$ and a sequence $\{(\theta^o_k,\theta^i_k)\}_{k\in\mathbb{N}}$ in $C^{1,\alpha}(\partial\Omega)\times C^{1,\alpha}(\partial\omega)_0$ such that 
\begin{equation}\label{thetak0}
\theta^o[\varepsilon]=\sum_{k=0}^\infty\frac{\theta^o_k}{k!}\varepsilon^k\quad\text{and }\quad\theta^i[\varepsilon]=\sum_{k=0}^\infty\frac{\theta^i_k}{k!}\varepsilon^k\qquad\forall \varepsilon\in]-\varepsilon_\theta,\varepsilon_\theta[\,,
\end{equation} where the two series converge normally in $C^{1,\alpha}(\partial\Omega)$ and in $C^{1,\alpha}(\partial\omega)_0$, respectively, for $\varepsilon\in]-\varepsilon_\theta,\varepsilon_\theta[$. Moreover,  
\[
(\theta^o_0,\theta^i_0)=(0,0)\, , \qquad \theta^o_k=0\qquad \forall k \in \{0,\dots, d-1\}\, ,
\]
for each  $k \in \{0,\dots, d-1\}$ the function $\theta^i_{k}$ is the unique solution in $C^{1,\alpha}(\partial\omega)_0$ of
\begin{equation}\label{thetai1}
\frac{1}{2}\theta^i_k(t)-W_{\omega}[\theta^i_k](t)=\sum_{|\beta|=k}\frac{k!}{\beta!}t^\beta (D^{\beta}u^a)(0)-\sum_{l=0}^k \sum_{\substack{\beta \in \mathbb{N}^d\\ |\beta|=l}} \binom{k}{l}\frac{l!}{\beta!}{\int_{\partial\omega}s^{\beta}(D^\beta u^a)(0)\rho^i_{k-l}(s) \, d\sigma_s}\quad \forall t\in\partial\omega\,{,}
\end{equation}
and for each $k\in\mathbb{N}\setminus\{0,\dots, d-1\}$ the pair $(\theta^o_k,\theta^i_k)$ is the unique solution in $C^{1,\alpha}(\partial\Omega)\times C^{1,\alpha}(\partial\omega)_0$ of the following  system of integral equations which involves $\{(\theta^o_j,\theta^i_j)\}_{j=0}^{k-1}$,
\begin{align}\label{thetaok}
&\frac{1}{2}\theta^o_k(x)+W_{\Omega}[\theta^o_k](x)\\
\nonumber
&=\frac{k!}{(k-(d-1))!}\sum_{j=0}^{k-(d-1)-1}\binom{k-(d-1)}{j}(-1)^{j+1}\sum_{|\beta|=j}\frac{j!}{\beta!}(\nabla D^{\beta} S_d)(x)\cdot \int_{\partial\omega}\theta^i_{k-(d-1)-j}(s)\;\nu_{\omega}(s)s^{\beta} d\sigma_s\\ 
\nonumber
&\qquad\qquad\qquad\qquad\qquad\qquad\qquad\qquad\qquad\qquad\qquad\qquad\qquad\qquad\qquad \forall x\in\partial\Omega\,,\\
\label{thetaik}
&\frac{1}{2}\theta^i_k(t)-W_{\omega}[\theta^i_k](t)=\sum_{j=0}^{k-(d-1)}\binom{k}{j}(-1)^{j+1}\sum_{|\beta|=j}\frac{j!}{\beta!}t^\beta\int_{\partial\Omega}\theta^o_{k-j}\nu_{\Omega}\cdot \nabla D^{\beta} S_d \,d\sigma\\ \nonumber
&+\sum_{|\beta|=k}\frac{k!}{\beta!}t^\beta (D^{\beta}u^a)(0)-\sum_{l=0}^k \sum_{\substack{\beta \in \mathbb{N}^d\\ |\beta|=l}} \binom{k}{l}\frac{l!}{\beta!}{\int_{\partial\omega}s^{\beta}(D^\beta u^a)(0)\rho^i_{k-l}(s) \, d\sigma_s}\qquad  \forall t\in\partial\omega\,.\nonumber
\end{align}
\end{prop}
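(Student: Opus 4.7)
The plan is to follow closely the argument of Proposition \ref{rhok}, with the operator $\Lambda$ playing the role of $M$. First, from the real analyticity of $\varepsilon \mapsto (\theta^o[\varepsilon], \theta^i[\varepsilon])$ provided by Proposition \ref{thetaeps}(iii), I would extract some $\varepsilon_\theta \in ]0, \varepsilon_0[$ together with the normally convergent Taylor expansions (\ref{thetak0}), identifying the coefficients as $(\theta^o_k, \theta^i_k) = (\partial_\varepsilon^k \theta^o[0], \partial_\varepsilon^k \theta^i[0])$. Differentiating the identity $\Lambda[\varepsilon, \theta^o[\varepsilon], \theta^i[\varepsilon]] = 0$ $k$ times and evaluating at $\varepsilon = 0$ will then produce the integral equations stated.

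The technical ingredients are the same as in the proof of Proposition \ref{rhok}: the Leibniz rule applied to the $\varepsilon^{d-1}$ factor in $\Lambda^o$, formula (\ref{der.eq0}) applied to the kernels $\nabla S_d(x - \varepsilon s)$ and $\nabla S_d(\varepsilon t - y)$, the Taylor expansion $U^a_\#[\varepsilon](t) = u^a(\varepsilon t)$, and the already established series for $\rho^i[\varepsilon]$ from Proposition \ref{rhok} to handle $\int_{\partial\omega} U^a_\#[\varepsilon]\, \rho^i[\varepsilon]\, d\sigma$. The signs $(-1)^{|\beta|+1}$ appearing in (\ref{thetaok})--(\ref{thetaik}) will come from the identity $(\nabla D^\beta S_d)(-y) = (-1)^{|\beta|+1}(\nabla D^\beta S_d)(y)$.

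The conclusion will be drawn by induction on $k$. For $k = 0$: the $\Lambda^o$ identity reads $\frac{1}{2}\theta^o_0 + W_\Omega[\theta^o_0] = 0$, hence $\theta^o_0 = 0$ by the kernel characterization of $\frac{1}{2}I + W_\Omega$ on $C^{1,\alpha}(\partial\Omega)$ from \cite[\S 6.5, \S 6.6]{DaLaMu21}; then $\Lambda^i$ reduces, via $\int_{\partial\omega}\rho^i_0\, d\sigma = 1$, to $\frac{1}{2}\theta^i_0 - W_\omega[\theta^i_0] = u^a(0) - u^a(0) = 0$, which is (\ref{thetai1}) at $k = 0$, and uniqueness in $C^{1,\alpha}(\partial\omega)_0$ forces $\theta^i_0 = 0$. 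For $1 \leq k \leq d - 2$: the prefactor $\varepsilon^{d-1}$ kills the coupling term in $\Lambda^o$ (since Leibniz requires $k - j = d - 1$ with $j \geq 0$, impossible when $k < d - 1$), so $\theta^o_k = 0$; then the $\Lambda^i$ identity at order $k$, combined with the inductively known vanishing $\theta^o_j = 0$ for $j \leq k$, yields (\ref{thetai1}). For $k = d - 1$: the only Leibniz contribution in $\Lambda^o$ comes from $j = 0$ and involves $\theta^i_0 = 0$, so $\theta^o_{d-1} = 0$ and (\ref{thetai1}) for $\theta^i_{d-1}$ follows as before. For $k \geq d$: all coupling terms are active and the full system (\ref{thetaok})--(\ref{thetaik}) emerges, the upper limit $k - (d - 1)$ in the first sum of (\ref{thetaik}) encoding that $\theta^o_j = 0$ for $j \leq d - 1$.

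The main obstacle I anticipate is the combinatorial bookkeeping of the multi-index Leibniz expansions needed to match exactly the signs and the binomial and multinomial factors in (\ref{thetaok})--(\ref{thetaik}), and the delicate handling of the threshold $k = d - 1$, where the $\varepsilon^{d-1}$ coupling first becomes active but happens to depend on the density $\theta^i_0$ whose vanishing has just been established.
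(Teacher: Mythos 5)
Your proposal is correct and follows the same route as the paper's proof: real analyticity of $\varepsilon\mapsto(\theta^o[\varepsilon],\theta^i[\varepsilon])$ from Proposition~\ref{thetaeps}(iii), Taylor coefficients identified via $\partial_\varepsilon^k\Lambda[\varepsilon,\theta^o[\varepsilon],\theta^i[\varepsilon]]=0$, the Leibniz rule for the $\varepsilon^{d-1}$ factor, formula~\eqref{der.eq0} for the kernels, and the uniqueness statements from \cite[\S 6.5--6.6]{DaLaMu21}. One small factual imprecision: the sign $(-1)^{j+1}$ in \eqref{thetaok} does \emph{not} come from $(\nabla D^{\beta}S_d)(-y)=(-1)^{|\beta|+1}(\nabla D^{\beta}S_d)(y)$ (that parity identity only enters in \eqref{thetaik}, for the kernel $\nabla S_d(\varepsilon t-y)$ evaluated at $\varepsilon=0$); in \eqref{thetaok} the factor $(-1)^{j}$ arises because differentiating $\nabla S_d(x-\varepsilon s)$ in $\varepsilon$ brings down $(-s)^{\beta}$, and the extra $(-1)$ comes from moving the coupling term to the right-hand side — this would of course self-correct in the actual computation, but it is worth keeping straight.
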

\proof  We proceed as in  Dalla Riva, Musolino, and Rogosin \cite[Prop.~5.2]{DaMuRo15} and in \cite[Prop~3.2]{AbBoLeMu21}. Proposition \ref{thetaeps} (iii) implies that the map  
\[
\varepsilon \mapsto (\theta^o[\varepsilon],\theta^i[\varepsilon])
\]  
is real analytic. As a consequence, there exist $\varepsilon_\theta$ and $\{(\theta^o_k,\theta^i_k)\}_{k\in\mathbb{N}}$ such that \eqref{thetak0} holds.  Our goal is to identify the terms $(\theta^o_k,\theta^i_k)$ for each ${k\in\mathbb{N}}$ and we do that by computing the derivatives of $(\theta^o[\varepsilon],\theta^i[\varepsilon])$ with respect to $\varepsilon$. By Proposition \ref{thetaeps} (ii), we have 
\[
\Lambda[\varepsilon,\theta^o[\varepsilon],\theta^i[\varepsilon]]=0 \qquad \forall \varepsilon\in]-\varepsilon_0,\varepsilon_0[\, .
\] 
We can compute the derivative with respect to $\varepsilon$ in the equality above and we deduce that 
\begin{equation}\label{eq:Lmbdder0}
\partial_\varepsilon^k(\Lambda[\varepsilon,\theta^o[\varepsilon],\theta^i[\varepsilon]])=0 \qquad \forall \varepsilon\in]-\varepsilon_0,\varepsilon_0[\, , \forall k\in\mathbb{N}\, .
\end{equation}
Hence we compute $\partial_\varepsilon^k(\Lambda[\varepsilon,\theta^o[\varepsilon],\theta^i[\varepsilon]])$ and we have
\begin{align}\label{thetaok'}
&\partial_\varepsilon^k(\Lambda^o[\varepsilon,\theta^o[\varepsilon],\theta^i[\varepsilon]])(x)=\frac{1}{2}\partial^k_\varepsilon\theta^o[\varepsilon](x)+W_{\Omega}[\partial^k_\varepsilon\theta^o[\varepsilon]](x)\\
\nonumber
&+\partial_\varepsilon^k(\varepsilon^{d-1}\int_{\partial\omega}\theta^i[\varepsilon](s)\;\nu_{\omega}(s)\cdot\nabla S_d(x-\varepsilon s)\, d\sigma_s) \qquad\qquad\qquad \forall x\in\partial\Omega\,,
\end{align}
\begin{align}
\label{thetaik'}
&\partial_\varepsilon^k(\Lambda^i[\varepsilon,\theta^o[\varepsilon],\theta^i[\varepsilon]])(t)=\frac{1}{2}\partial^k_\varepsilon\theta^i[\varepsilon](t)-W_{\omega}[\partial^k_\varepsilon\theta^i[\varepsilon]](t)\\
\nonumber
&-\sum_{j=0}^k\binom{k}{j}\sum_{|\beta|=j}\frac{j!}{\beta!}t^{\beta}\int_{\partial\Omega}\partial_\varepsilon^{k-j}\theta^o[\varepsilon](y)\,\nu_{\Omega}(y)\cdot(\nabla D^{\beta} S_d)(\varepsilon t-y)\,d\sigma_y\\
\nonumber
&-\sum_{|\beta|=k}\frac{k!}{\beta!}t^{\beta} (D^{\beta}u^a)(\varepsilon t)\\
&+\sum_{l=0}^k \sum_{\substack{\beta \in \mathbb{N}^d\\ |\beta|=l}} \binom{k}{l}\frac{l!}{\beta!}{ \int_{\partial\omega}s^{\beta}(D^\beta u^a)(\varepsilon s)\partial_\varepsilon^{k-l}\rho^i[\varepsilon](s)\,d\sigma_s}\qquad\qquad\qquad\qquad\forall t\in\partial\omega\,, \nonumber
\end{align}
for all $\varepsilon\in]-\varepsilon_0,\varepsilon_0[$ and all  $k\in\mathbb{N}$.  
Then we note that if $k<d-1$ then
\[
\partial_\varepsilon^k\bigg(\varepsilon^{d-1}\int_{\partial\omega}\theta^i[\varepsilon](s)\;\nu_{\omega}(s)\cdot\nabla S_d(x-\varepsilon s)\, d\sigma_s\bigg)_{|\varepsilon=0}=0 \qquad \forall x \in \partial \Omega\, . 
\]
Instead,  if $k\geq d-1$ then
\[
\begin{split}
&\partial_\varepsilon^k\bigg(\varepsilon^{d-1}\int_{\partial\omega}\theta^i[\varepsilon](s)\;\nu_{\omega}(s)\cdot\nabla S_d(x-\varepsilon s)\, d\sigma_s\bigg)_{|\varepsilon=0}\\
&=\binom{k}{(d-1)}(d-1)! \partial_\varepsilon^{k-(d-1)}\bigg(\int_{\partial\omega}\theta^i[\varepsilon](s)\;\nu_{\omega}(s)\cdot\nabla S_d(x-\varepsilon s)\, d\sigma_s\bigg)_{|\varepsilon=0}\\
&=\frac{k!}{(k-(d-1))!}\sum_{j=0}^{k-(d-1)}\binom{k-(d-1)}{j}(-1)^j\\
&\qquad \qquad \times \sum_{|\beta|=j}\frac{j!}{\beta!} \int_{\partial\omega}\partial_\varepsilon^{k-(d-1)-j}\big(\theta^i[\varepsilon](s)\big)_{|\varepsilon=0}\; s^\beta \nu_{\omega}(s)\cdot\nabla D^\beta S_d(x)\, d\sigma_s \qquad \forall x \in {\partial}\Omega\, . 
\end{split}
\]
The real analyticity of $\varepsilon \mapsto (\theta^o[\varepsilon],\theta^i[\varepsilon])$ implies that $(\theta^o_k,\theta^i_k)=(\partial_\varepsilon^k\theta^o[0],\partial_\varepsilon^k\theta^i[0])$ for all  $k\in\mathbb{N}$. Therefore, by taking $\varepsilon=0$  in \eqref{thetaok'} and \eqref{thetaik'} and by equality \eqref{eq:Lmbdder0}, we deduce that  $(\theta^o_0,\theta^i_0)=(0,0)$, that $\theta^o_k=0$ for all $k \in \{0,\dots,d-1\}$, that  for each $k \in \{0,\dots,d-1\}$ the function $\theta^i_k$ solves equation \eqref{thetai1},  and that $(\theta^o_k,\theta^i_k)$  is a solution of \eqref{thetaok} and \eqref{thetaik} for all $k\in\mathbb{N}\setminus \{0,\dots,d-1\}$. To conclude,  we note  that, by classical potential theory (cf., {\it e.g.}, Dalla Riva, Lanza de Cristoforis, and Musolino \cite[\S 6.5 and \S 6.6]{DaLaMu21}), equation \eqref{thetai1} has a unique solution in $C^{1,\alpha}(\partial\omega)_0$ and equations \eqref{thetaok}-\eqref{thetaik} have a unique solution in $C^{1,\alpha}(\partial\Omega)\times C^{1,\alpha}(\partial\omega)_0$. 
 \qed

\section{Series expansion of $\nu_{\omega}(\cdot)\cdot \nabla \big(u^a_\varepsilon(\varepsilon \cdot)\big) u^b(\varepsilon \cdot)$ around $\varepsilon=0$}\label{sec4}

The next step is to obtain a series expansion of $\nu_{\omega}(\cdot)\cdot \nabla \big(u^a_\varepsilon(\varepsilon \cdot)\big) u^b(\varepsilon \cdot)$ for $\varepsilon$ close to $0$. The coefficients of such series will be defined by means of the sequences $\{(\rho^o_k,\rho^i_k)\}_{k\in\mathbb{N}}$ and $\{(\theta^o_k,\theta^i_k)\}_{k\in\mathbb{N}}$  introduced in Section \ref{sec3}. We begin with the intermediate result of Proposition \ref{uk} below, whose proof can be obtained by arguing as in the proof of \cite[Prop.~2.9]{AbLeMu22}.

\begin{prop}\label{uk} Let $\{(\rho^o_k,\rho^i_k)\}_{k\in\mathbb{N}}$ and $\{(\theta^o_k,\theta^i_k)\}_{k\in\mathbb{N}}$ be as in Propositions \ref{rhok} and \ref{thetak}, respectively.  Let 
\[
\begin{split}
&u^a_{\mathrm{m},0}(t)\equiv0\qquad\qquad\qquad\qquad\qquad\qquad\qquad\qquad  \forall t\in\mathbb{R}^d\setminus\omega\,, \\
& u^a_{\mathrm{m},k}(t)\equiv- w^-[\partial\omega,\theta^i_k](t) \qquad\qquad\qquad\qquad\qquad  \forall t\in\mathbb{R}^d\setminus\omega\,, k \in \{0,\dots,d-1\}\, , \\
&u^a_{\mathrm{m},k}(t)\equiv\frac{1}{k!}\sum_{j=0}^{k-d}\binom{k}{j}(-1)^{j}\sum_{\substack{\beta \in \mathbb{N}^d\\|\beta|=j}}\frac{j!}{\beta!} t^\beta \int_{\partial\Omega}\theta^o_{k-j}\,\nu_{\Omega}\cdot(\nabla D^\beta S_d)\,d\sigma\\
&\qquad\qquad - \frac{1}{k!}w^-[\partial\omega,\theta^i_k](t) \quad\qquad\qquad\qquad\qquad  \forall t\in\mathbb{R}^d\setminus\omega\,, \quad \forall k \geq d \\
\end{split}
\]
and
 \[
\begin{split}
&v_{\mathrm{m},k}(t)\equiv\frac{1}{(k-(d-2))!}\sum_{j=0}^{k-(d-2)} \binom{k-(d-2)}{j}(-1)^j\sum_{\substack{\beta \in \mathbb{N}^d\\ {|\beta|=j}}}\frac{j!}{\beta!} t^\beta \int_{\partial\Omega}\rho^o_{k-(d-2)-j}D^\beta S_d\,d\sigma
 +\frac{1}{k!}v^-[\partial\omega,\rho^i_k](t)\\
&\qquad\qquad\qquad\qquad\qquad\qquad\qquad\qquad\qquad\qquad\qquad\qquad\qquad\qquad \forall t\in\mathbb{R}^d\setminus\omega\,, \\
&g^a_k\equiv\frac{1}{k!}\sum_{l=0}^k \sum_{\substack{\beta \in \mathbb{N}^d\\|\beta|=l}} \binom{k}{l}\frac{l!}{\beta!}\int_{\partial\omega}s^\beta(D^\beta u^a)(0)\rho^i_{k-l}(s)\,d\sigma_s\,,\\
&r_k\equiv\frac{1}{(k-(d-2))!\int_{\partial\omega}d\sigma}\sum_{j=0}^{k-(d-2)}\binom{k-(d-2)}{j}(-1)^j\sum_{\substack{\beta \in \mathbb{N}^d\\ |\beta|=j}}\frac{j!}{\beta!} \int_{\partial\omega}s^\beta \,d\sigma_s\int_{\partial\Omega}\rho^o_{k-(d-2)-j}D^\beta S_d\,d\sigma\\
&\qquad+\frac{1}{k!\int_{\partial\omega}d\sigma}\int_{\partial\omega}v[\partial\omega,\rho^i_k]\,d\sigma\,,
\end{split}
\] for all $k\in\mathbb{N}$. Here above the sum $\sum_{j=0}^{k-(d-2)}$ is omitted if $k-(d-2)<0$. {Let} 
\[
\begin{split}
&u^l_{\#,k}(t)\equiv\sum_{\substack{\beta\in \mathbb{N}^d\\ |\beta|=k}}\frac{D^\beta u^l(0)}{\beta !}t^\beta \qquad\qquad\quad  \forall t\in  \mathbb{R}^d\,, \qquad l=a,b\, ,\\
& \tilde{u}_{k}(t)\equiv\sum_{l=0}^k \nu_{\omega}(t)\cdot\nabla u^a_{\mathrm{m},l |\partial \omega}(t) u^b_{\#,k-l}(t)
\qquad  \forall t\in\partial\omega\,, \\
& \tilde{v}_k(t)\equiv \nu_{\omega}(t)\cdot\nabla v_{\mathrm{m},k|\partial \omega}(t) \qquad  \forall t\in\partial\omega\,, \\
&\tilde{g}_{k}(t)\equiv\sum_{l=0}^k g^a_l u^b_{\#,k-l}(t) \quad\qquad\qquad\qquad\qquad  \forall t\in\partial \omega\,, \end{split}
\]
for all $k\in\mathbb{N}$. Then  there exists $\tilde{\varepsilon}\in]0,\varepsilon_0]$  such that
\begin{equation}\label{funepsm}
\nu_{\omega}(\cdot) \cdot \nabla \big(u^a_{\varepsilon}(\varepsilon\cdot)\big)_{|\partial \omega}u^b(\varepsilon \cdot)_{|\partial \omega}=\sum_{k=1}^\infty \tilde{u}_{k}(\cdot)\varepsilon^k+\Bigg(\sum_{k=0}^\infty \tilde{g}_{k}(\cdot)\varepsilon^k\Bigg)\frac{\sum_{k=0}^\infty \tilde{v}_{k}(\cdot)\varepsilon^k}{\sum_{k=0}^\infty r_{k}\varepsilon^k}
\end{equation} for all $\varepsilon\in]-\tilde{\varepsilon},\tilde{\varepsilon}[\setminus\{0\}$. Moreover, the series $\sum_{k=0}^\infty \tilde{g}_{k}\varepsilon^k$, $\sum_{k={1}}^\infty \tilde{u}_{k}\varepsilon^k$, and $\sum_{k=0}^\infty \tilde{v}_{k}\varepsilon^k$ converge normally  in $C^{0,\alpha}(\partial \omega)$  for $\varepsilon\in]-\tilde{\varepsilon},\tilde{\varepsilon}[$ and  $\sum_{k=0}^\infty r_{k}\varepsilon^k$ converge absolutely in $]-\tilde{\varepsilon},\tilde{\varepsilon}[$. 
\end{prop}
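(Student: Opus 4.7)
The plan is to start from the representation formula for $u^a_\varepsilon(\varepsilon t)$ given in Proposition \ref{solution} and to expand each of the three constituent pieces as a power series in $\varepsilon$ valued in a Schauder space, before taking the normal derivative in $t$ on $\partial\omega$ and multiplying by $u^b(\varepsilon t)$. The key inputs are the series representations of $(\rho^o[\varepsilon],\rho^i[\varepsilon])$ and $(\theta^o[\varepsilon],\theta^i[\varepsilon])$ from Propositions \ref{rhok} and \ref{thetak}, the real-analyticity of $u^a,u^b$ at $0$, and formula \eqref{der.eq0} for differentiating composite maps of the form $F(\varepsilon x)$.

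For the first piece $w^+[\partial\Omega,\theta^o[\varepsilon]](\varepsilon t)$, since $\varepsilon t$ lies far from $\partial\Omega$ for $\varepsilon$ small, the double layer potential coincides with the integral against the smooth kernel $\nu_\Omega(y)\cdot\nabla S_d(\varepsilon t-y)$; Taylor expanding this kernel around $-y$ using \eqref{der.eq0} and combining with the series for $\theta^o[\varepsilon]$ produces a power series in $\varepsilon$, whose terms vanish for $k\le d-1$ because $\theta^o_k=0$ there (Proposition \ref{thetak}). This accounts for the polynomial part of $u^a_{\mathrm m,k}$ when $k\ge d$. The second piece $-w^-[\partial\omega,\theta^i[\varepsilon]](t)$ depends on $\varepsilon$ only through the density $\theta^i[\varepsilon]$, so substituting \eqref{thetak0} directly yields the $w^-[\partial\omega,\theta^i_k]$ contribution of $u^a_{\mathrm m,k}$. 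For the fraction term, the scalar factor $\int_{\partial\omega}U^a_\#[\varepsilon]\rho^i[\varepsilon]\,d\sigma$ is expanded by combining the Taylor series $u^a(\varepsilon s)=\sum_l\varepsilon^l\sum_{|\beta|=l}\tfrac{s^\beta}{\beta!}(D^\beta u^a)(0)$ with the series for $\rho^i[\varepsilon]$, giving $\sum_k g^a_k\varepsilon^k$; the numerator $\varepsilon^{d-2}v^+[\partial\Omega,\rho^o[\varepsilon]](\varepsilon t)+v^-[\partial\omega,\rho^i[\varepsilon]](t)$ is expanded similarly (Taylor-expanding $S_d(\varepsilon t-y)$ in the outer term and substituting directly in the inner one), yielding $\sum_k v_{\mathrm m,k}(t)\varepsilon^k$; averaging over $\partial\omega$ produces the denominator series $\sum_k r_k\varepsilon^k$. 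Taking the normal derivative in $t$ on $\partial\omega$ commutes with the series expansions (since the series converge normally in $C^{1,\alpha}(\partial\omega)$, hence term-by-term differentiation is legitimate on the boundary), producing $\tilde v_k=\nu_\omega\cdot\nabla v_{\mathrm m,k|\partial\omega}$ and $\nu_\omega\cdot\nabla u^a_{\mathrm m,k|\partial\omega}$. Finally, multiplication by the Taylor expansion of $u^b(\varepsilon t)=\sum_k\varepsilon^k u^b_{\#,k}(t)$ generates the Cauchy products defining $\tilde u_k$ and $\tilde g_k$.

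For the convergence statement, I would argue as follows: each intermediate object (the scalar $\int_{\partial\omega}U^a_\#[\varepsilon]\rho^i[\varepsilon]\,d\sigma$, and the $C^{1,\alpha}(\partial\omega)$-valued maps obtained by restricting layer potentials to $\partial\omega$ and differentiating) is the composition of real-analytic maps in $\varepsilon$, hence real-analytic with values in the corresponding Banach space. This gives normal convergence of the series $\sum_k\tilde g_k\varepsilon^k$, $\sum_k\tilde u_k\varepsilon^k$, $\sum_k\tilde v_k\varepsilon^k$ in $C^{0,\alpha}(\partial\omega)$ and absolute convergence of $\sum_k r_k\varepsilon^k$ on a common neighborhood of $0$. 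Moreover, $r_0=(\int_{\partial\omega}d\sigma)^{-1}\int_{\partial\omega}v[\partial\omega,\rho^i_0]\,d\sigma>0$, because $\rho^i_0$ is precisely the equilibrium (capacitary) density of $\omega$ and $v[\partial\omega,\rho^i_0]$ is the associated positive equilibrium potential; by continuity the denominator $\sum_k r_k\varepsilon^k$ stays bounded away from $0$ on a possibly smaller interval $]-\tilde\varepsilon,\tilde\varepsilon[$, so that the quotient in \eqref{funepsm} is well-defined and analytic there.

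The main obstacle is the combinatorial bookkeeping when performing the Cauchy products and the Taylor expansion of the kernels: one must carefully track the binomial coefficients $\binom{k}{j}$ and factors $j!/\beta!$ arising from \eqref{der.eq0}, the sign $(-1)^{|\beta|}$ coming from the parity $(\nabla D^\beta S_d)(-y)=(-1)^{|\beta|+1}(\nabla D^\beta S_d)(y)$, and the shift of indices by $d-2$ (or $d-1$) produced by the factor $\varepsilon^{d-2}$ that multiplies $v^+[\partial\Omega,\rho^o[\varepsilon]](\varepsilon t)$. This is exactly the point where the higher-dimensional computation departs from the planar case of \cite[Prop.~2.9]{AbLeMu22}: the algebra is formally the same, but all index shifts coming from the fundamental solution $S_d$ (which is of order $|x|^{2-d}$ instead of logarithmic) have to be propagated consistently through the Cauchy products, which is what leads to the explicit formulas for $u^a_{\mathrm m,k}$, $v_{\mathrm m,k}$ and $r_k$ with summations starting at $j=0$ up to $k-d$ or $k-(d-2)$.
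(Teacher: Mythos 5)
Your proposal is correct in substance and follows essentially the same route the paper intends (the paper simply refers the reader to the analogous proof of \cite[Prop.~2.9]{AbLeMu22}): expand the three pieces of the representation formula of Proposition~\ref{solution} with the help of \eqref{der.eq0} and the series of Propositions~\ref{rhok}--\ref{thetak}, form Cauchy products, take the exterior normal derivative term-by-term, and multiply by $u^b(\varepsilon\cdot)$. One small correction: your claim that $r_0>0$ has the wrong sign. By Lemma~\ref{rhoi0}, $v^+[\partial\omega,\rho^i_0]$ equals $\big((2-d)s_d\lim_{t\to\infty}|t|^{d-2}H^i(t)\big)^{-1}$, and since $d\ge3$ and the exterior capacitary potential $H^i$ is positive, this constant is negative; equivalently, $-1/r_0=\mathrm{Cap}_{\mathbb{R}^d}(\overline\omega)>0$ (Remark~\ref{rem:capepsfirst}), so $r_0<0$. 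This does not affect the argument, since all you actually need is $r_0\neq0$ to ensure the denominator $\sum_k r_k\varepsilon^k$ is bounded away from zero near $\varepsilon=0$.
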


We would like to have a representation formula for $\nu_{\omega}(\cdot) \cdot \nabla \big(u^a_{\varepsilon}(\varepsilon\cdot)\big)_{|\partial \omega}u^b(\varepsilon \cdot)_{|\partial \omega}$ in the form of a convergent power series of the type $\sum_{n=0}^\infty  \varphi_\varepsilon(\cdot) \varepsilon^n$. By exploiting an  argument similar to that of  Dalla Riva, Musolino, and Rogosin \cite[Thm.~6.3]{DaMuRo15}, we can prove  Theorem  \ref{umk} below where we obtain from formula \eqref{funepsm}  a series expansion for the map which takes $\varepsilon$ to  $\nu_{\omega}(\cdot) \cdot \nabla \big(u^a_{\varepsilon}(\varepsilon\cdot)\big)_{|\partial \omega}u^b(\varepsilon \cdot)_{|\partial \omega}$ (see also \cite[Thm.~2.10]{AbBoLeMu21} and \cite[Thm.~4.3]{AbLeMu22}).

\begin{theorem}\label{umk}
With the notation introduced in Proposition \ref{uk}, let $\{\tilde{a}_{n}\}_{n\in\mathbb{N}}$ be the sequence of functions from $\partial \omega$ to $\mathbb{R}$ defined by 
\[
\tilde{a}_{n}\equiv\sum_{k=0}^n \tilde{g}_{n-k}\tilde{v}_{k}\qquad\forall n\in\mathbb{N}\,.
\] Let $\{\tilde{\lambda}_{n}\}_{n\in\mathbb{N}}$ be the sequence of functions from $\partial \omega$ to $\mathbb{R}$ defined by 
\[
\tilde{\lambda}_{0}\equiv {\tilde{a}_{0}/{r_0}}\, ,\qquad \tilde{\lambda}_{n}\equiv \tilde{u}_{n} +\tilde{a}_{n}/{r_0}+\sum_{k=1}^n \tilde{a}_{n-k} \sum_{j=1}^k\frac{(-1)^j}{r_0^{j+1}}\sum_{\substack{\beta\in(\mathbb{N}\setminus\{0\})^j\\\ |\beta|=k}}\prod_{h=1}^jr_{\beta_h} \qquad \forall n \geq 1\, .
\] 
Then there exists $\tilde{\varepsilon}'\in]0,\varepsilon_0]$ such that 
  \begin{equation}\label{fuepsmseries}
\nu_{\omega}(\cdot) \cdot \nabla \big(u^a_{\varepsilon}(\varepsilon\cdot)\big)_{|\partial \omega}u^b(\varepsilon \cdot)_{|\partial \omega}=\sum_{n=0}^\infty\tilde{\lambda}_{n}(\cdot) \varepsilon^n
\end{equation} for all $\varepsilon\in]-\tilde{\varepsilon}',\tilde{\varepsilon}'[\setminus\{0\}$. Moreover, the series 
\[
\sum_{n=0}^\infty\tilde{\lambda}_{n}(\cdot) \varepsilon^n
\] 
converges normally in $C^{0,\alpha}(\partial \omega)$  for $\varepsilon \in]-\tilde{\varepsilon}',\tilde{\varepsilon}'[$ {and}
\[
\begin{split}
 {\tilde{\lambda}_{0}=}&\frac{u^a(0)u^b(0)}{r_0} \frac{\partial}{\partial \nu_{\omega}}v^-[\partial \omega, \rho^i_0]\, .
\end{split}
\]
\end{theorem}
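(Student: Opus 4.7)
The plan is to combine formula \eqref{funepsm} from Proposition \ref{uk} with the standard expansion of the reciprocal of a power series whose constant term is nonzero, and then to carry out the resulting Cauchy products in the Banach algebra $C^{0,\alpha}(\partial\omega)$. The first substantive step is to verify that $r_0\neq 0$, which is needed to invert $\sum_{k\geq 0}r_k\varepsilon^k$. For $d\geq 3$ and $k=0$ the double sum in the definition of $r_k$ is omitted since $0-(d-2)<0$, so $r_0 = \frac{1}{\int_{\partial\omega}d\sigma}\int_{\partial\omega} v[\partial\omega,\rho^i_0]\,d\sigma$. Using the jump relations for the normal derivative of the single layer together with the equation $\frac{1}{2}\rho^i_0 - W^*_\omega[\rho^i_0]=0$ from Proposition \ref{rhok}, one checks that $v^+[\partial\omega,\rho^i_0]$ has zero normal derivative on $\partial\omega$, hence is constant in $\omega$; since $v^-[\partial\omega,\rho^i_0]$ is harmonic in $\mathbb{R}^d\setminus\overline\omega$, vanishes at infinity, and agrees with this constant on $\partial\omega$, the constant cannot be zero (otherwise $\rho^i_0$ would vanish, contradicting $\int_{\partial\omega}\rho^i_0\,d\sigma=1$). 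Thus $r_0\neq 0$.

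Once this is established, $\sum_{k\geq 0}r_k\varepsilon^k$ admits a multiplicative inverse on some smaller interval $]-\tilde\varepsilon',\tilde\varepsilon'[$, obtained from
\[
\frac{1}{r_0+\sum_{k\geq1}r_k\varepsilon^k}=\frac{1}{r_0}\sum_{j=0}^{\infty}(-1)^j\left(\frac{1}{r_0}\sum_{k\geq1}r_k\varepsilon^k\right)^j.
\]
Extracting the coefficient of $\varepsilon^k$ yields $\sum_{j=1}^k\frac{(-1)^j}{r_0^{j+1}}\sum_{\beta\in(\mathbb{N}\setminus\{0\})^j,|\beta|=k}\prod_{h=1}^jr_{\beta_h}$ for $k\geq 1$, and $1/r_0$ for $k=0$. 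I would then perform two successive Cauchy products: first $\left(\sum_k\tilde g_k\varepsilon^k\right)\left(\sum_k\tilde v_k\varepsilon^k\right)=\sum_n\tilde a_n\varepsilon^n$ with $\tilde a_n=\sum_{k=0}^n\tilde g_{n-k}\tilde v_k$, and then multiplication by the reciprocal series above. Adding the contribution of $\sum_{n\geq 1}\tilde u_n\varepsilon^n$ produces exactly $\tilde\lambda_n$ as defined in the statement; for $n=0$ this collapses to $\tilde\lambda_0=\tilde a_0/r_0$, since $\tilde u_0=0$ by the construction in Proposition \ref{uk}. Normal convergence in $C^{0,\alpha}(\partial\omega)$ is preserved at each step because $C^{0,\alpha}(\partial\omega)$ is a Banach algebra and inversion is real-analytic on invertible elements.

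Finally, to obtain the explicit form of $\tilde\lambda_0$, I would evaluate each factor: from $u^a_{\#,0}\equiv u^a(0)$, $u^b_{\#,0}\equiv u^b(0)$ and $\int_{\partial\omega}\rho^i_0\,d\sigma=1$ one gets $\tilde g_0=u^a(0)u^b(0)$; since $0-(d-2)<0$ for $d\geq 3$ the sum in $v_{\mathrm{m},0}$ is omitted, so $v_{\mathrm{m},0}=v^-[\partial\omega,\rho^i_0]$ and thus $\tilde v_0=\frac{\partial}{\partial\nu_\omega}v^-[\partial\omega,\rho^i_0]$ on $\partial\omega$. Substituting in $\tilde\lambda_0=\tilde g_0\tilde v_0/r_0$ gives the claimed formula. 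The main obstacle is the careful bookkeeping in the three-fold convolution, as one must recognise the coefficients as products $\prod_h r_{\beta_h}$ summed over compositions of $k$; the non-vanishing of $r_0$ is standard potential theory but essential to justify the inversion step.
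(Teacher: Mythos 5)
Your proposal is correct and follows essentially the same route as the paper, which (as noted just before the theorem statement) relies on the series-manipulation argument of Dalla Riva, Musolino, and Rogosin and on the analogues in the earlier two-dimensional works: invert the scalar series $\sum_k r_k\varepsilon^k$ by a geometric series, perform the Cauchy products in the Banach algebra $C^{0,\alpha}(\partial\omega)$, and identify the coefficients as the three-fold convolution indexed by compositions of $k$. Your explicit verification that $r_0\neq0$ (via the vanishing of the interior normal derivative of $v^+[\partial\omega,\rho^i_0]$, the uniqueness of the exterior Dirichlet problem, and the jump relation forcing $\rho^i_0=0$ if $r_0=0$) is a point the paper in effect outsources to its Lemma \ref{rhoi0}, and your direct argument is valid and self-contained; the remaining bookkeeping, including the evaluation $\tilde g_0=u^a(0)u^b(0)$, $\tilde v_0=\partial_{\nu_\omega}v^-[\partial\omega,\rho^i_0]$, and $\tilde u_0=0$, is exactly what yields the stated formula for $\tilde\lambda_0$.
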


\section{Series expansion of $\mathrm{Cap}_\Omega(\varepsilon \overline{\omega},u^a,u^b)$}\label{sec5}

We recall that the $(u^a,u^b)$-capacity $\mathrm{Cap}_\Omega(\varepsilon \overline{\omega},u^a,u^b)$ can be represented  as the sum of $\int_{\Omega_\varepsilon}\nabla u^a_\varepsilon \cdot \nabla u^b_\varepsilon \, dx$ and of $\int_{\varepsilon \omega}\nabla u^a \cdot \nabla u^b \, dx$. Therefore, in order to compute a series expansion of $\mathrm{Cap}_\Omega(\varepsilon \overline{\omega},u^a,u^b)$, we begin by  providing an expansion for $\int_{\varepsilon \omega}\nabla u^a \cdot \nabla u^b \, dx$ around $\varepsilon=0$. 

\begin{lem}\label{lem:nrguom}
Let $\{\xi_{n}\}_{n\in\mathbb{N}}$ be the sequence of real numbers defined by 
\[
\begin{split}
& \xi_0\equiv\xi_1\equiv \dots\equiv \xi_{d-1}\equiv0\, , \qquad\xi_{n}\equiv\sum_{j=1}^d\sum_{l=0}^{n-d} \int_{\omega} \partial_j u^a_{\#,l+1}(t)\partial_j u^b_{\#,n-l-(d-1)}(t)\, dt\qquad \forall n \geq d\ .
\end{split}
\] 
Then there exists $\varepsilon_\xi\in]0,\varepsilon_0]$ such that 
\[
\int_{\varepsilon \omega}\nabla u^a \cdot \nabla u^b \, dx=\sum_{n=d}^\infty\xi_n \varepsilon^n
\]
for all $\varepsilon\in]0,\varepsilon_\xi[$. Moreover, 
\[
\xi_d = \nabla u^a(0) \cdot \nabla u^b(0) m_d(\omega) \, ,
\]
and the series 
\[
\sum_{n=d}^\infty\xi_n \varepsilon^n
\] 
converges absolutely for $\varepsilon \in]-\varepsilon_\xi,\varepsilon_\xi[$. (The symbol $m_d(\dots)$ denotes the d-dimensional Lebesgue measure of a set).
\end{lem}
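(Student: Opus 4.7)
The plan is to exploit the real-analyticity of $u^a$ and $u^b$ near the origin, rescale via $x = \varepsilon t$, and use the homogeneity of the principal parts $u_{\#,k}^l$ to expose clean powers of $\varepsilon$. Since $u^a$ and $u^b$ are admissible, both admit convergent Taylor series on some open ball $B(0,r) \subseteq \Omega$, with $u^l(x) = \sum_{k=0}^\infty u_{\#,k}^l(x)$ and normal convergence on compact subsets of $B(0,r)$. Termwise differentiation then yields $\nabla u^l(x) = \sum_{k=1}^\infty \nabla u_{\#,k}^l(x)$, since $u_{\#,0}^l$ is constant.

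I would then pick $\varepsilon_\xi \in ]0,\varepsilon_0]$ small enough that $\varepsilon\overline{\omega} \subset B(0,r)$ for $\varepsilon \in [0,\varepsilon_\xi[$, and perform the change of variable $x = \varepsilon t$, giving
\[
\int_{\varepsilon \omega}\nabla u^a\cdot\nabla u^b\,dx = \varepsilon^d \int_\omega \nabla u^a(\varepsilon t)\cdot\nabla u^b(\varepsilon t)\,dt.
\]
Since $\nabla u_{\#,k+1}^l$ is homogeneous of degree $k$, we have $\nabla u^l(\varepsilon t) = \sum_{k=0}^\infty \varepsilon^k \nabla u_{\#,k+1}^l(t)$, and the Cauchy product yields
\[
\nabla u^a(\varepsilon t)\cdot\nabla u^b(\varepsilon t) = \sum_{m=0}^\infty \varepsilon^m \sum_{l=0}^m \nabla u_{\#,l+1}^a(t)\cdot\nabla u_{\#,m-l+1}^b(t),
\]
with normal convergence on $\overline{\omega}$ for $\varepsilon \in ]-\varepsilon_\xi,\varepsilon_\xi[$ (a consequence of Cauchy estimates applied on the compact set $\varepsilon\overline{\omega}$). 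Multiplying by $\varepsilon^d$, interchanging sum and integral (justified by normal convergence), and relabelling $n = m+d$ produces exactly the series with coefficients $\xi_n$ given in the statement, with $\xi_n = 0$ for $n < d$ and absolute convergence of the resulting numerical series in $]-\varepsilon_\xi,\varepsilon_\xi[$.

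Finally, to identify $\xi_d$ it suffices to note that the sum defining $\xi_d$ contains only the term $l=0$, and $u_{\#,1}^l(t) = \sum_{j=1}^d \partial_j u^l(0)\, t_j$, so $\partial_j u_{\#,1}^l(t)$ is the constant $\partial_j u^l(0)$. Hence
\[
\xi_d = \sum_{j=1}^d \partial_j u^a(0)\,\partial_j u^b(0)\int_\omega dt = \nabla u^a(0)\cdot\nabla u^b(0)\,m_d(\omega),
\]
as claimed. No serious obstacle is expected: the argument is essentially bookkeeping around the Taylor expansion, and the only non-trivial point is the uniform estimate on the tail of the series used to swap $\int$ and $\sum$, which is routine given the analyticity hypothesis of Definition \ref{d:function}.
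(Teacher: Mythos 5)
Your argument follows essentially the same route as the paper: change of variable $x=\varepsilon t$, Taylor-expand $\nabla u^l(\varepsilon t)=\sum_h \varepsilon^h\nabla u^l_{\#,h+1}(t)$ using homogeneity, form the Cauchy product, integrate term by term, and read off $\xi_d$. The only cosmetic difference is that the paper justifies normal convergence in $C^{0,\alpha}(\overline{\omega})$ via composition-operator analyticity results (B\"ohme--Tomi, Henry, Valent), whereas you appeal directly to Cauchy estimates for the Taylor series; both are adequate for swapping the sum and the integral.
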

\proof 
We argue as in the proof of \cite[Lem.~2.12]{AbLeMu22}. We first note that the Theorem of change of variable in integrals implies that
\[
\int_{\varepsilon \omega}\nabla u^a \cdot \nabla u^b \, dx=\varepsilon^d\int_{ \omega}\nabla u^a(\varepsilon t)\cdot \nabla u^b(\varepsilon t) \, dt \qquad \forall \varepsilon \in ]0,\varepsilon_0[\, .
\]
Then  analyticity of $u^a$ and $u^b$ (see 
{Definition \ref{d:function}}) and  analyticity results for the composition operator (cf.~B\"{o}hme and Tomi~\cite[p.~10]{BoTo73}, 
Henry~\cite[p.~29]{He82}, Valent~\cite[Thm.~5.2, p.~44]{Va88}), imply that there exists $\varepsilon_\xi \in ]0,\varepsilon_0]$ such that the map from $]-\varepsilon_\xi,\varepsilon_\xi[$ to $C^{0,\alpha}(\overline{\omega})$ which takes $\varepsilon$ to $(\partial_j u^l)(\varepsilon \cdot)_{|\overline{\omega}}$ is real analytic for $l=a,b$ and that
\[
\begin{split}
(\partial_j u^l )(\varepsilon t) &=\sum_{h=0}^\infty \partial_j u^l_{\#,h+1}(t) \varepsilon^{h} \qquad \forall t \in \overline{\omega}\, , \qquad l=a,b\, ,
\end{split}
\]
where for $l=a,b$ the series $\sum_{h=0}^\infty \partial_j u^l_{\#, h+1|\overline{\omega}} \varepsilon^h$ converges normally in $C^{0,\alpha}(\overline{\omega})$  for $\varepsilon \in ]-\varepsilon_\xi,\varepsilon_\xi[$. Accordingly,
\[
(\partial_j u^a )(\varepsilon t) (\partial_j u^b)(\varepsilon t)=  \sum_{n=0}^\infty \Bigg(\sum_{l=0}^n \partial_j u^a_{\#,l+1}(t)\partial_j u^b_{\#,n-l+1}(t)\Bigg) \varepsilon^n \qquad \forall t \in \overline{\omega}\, , \forall \varepsilon \in ]-\varepsilon_\xi,\varepsilon_\xi[\setminus \{0\}\, .
\]
{Possibly taking a smaller $\varepsilon_\xi$, we also have
\begin{equation}\label{eq:nrguom1}
\int_{ \omega}\nabla u^a(\varepsilon t) \cdot  \nabla u^b(\varepsilon t) \, dt=\sum_{n=0}^\infty \bigg(\sum_{j=1}^d\sum_{l=0}^n \int_{\omega}\partial_j u^a_{\#,l+1}(t)\partial_j u^b_{\#,n-l+1}(t)\, dt\bigg) \varepsilon^n\, ,
\end{equation}
for all $\varepsilon \in ]0,\varepsilon_\xi[$. Moreover, 
\[
\begin{split}
\sum_{j=1}^d \int_{\omega}\partial_j u^a_{\#,1}(t)\partial_j u^b_{\#,1}(t)\, dt&= \nabla u^a(0) \cdot \nabla u^a(0) m_d(\omega)\, .
\end{split}
\]
Finally, to deduce the validity of the lemma, it is enough to multiply equation \eqref{eq:nrguom1} by $\varepsilon^d$.\qed

\medskip

To deduce our main result on the asymptotic behavior of $\mathrm{Cap}_\Omega(\varepsilon \overline{\omega},u^a,u^b)$, it suffices to integrate formula \eqref{fuepsmseries} over $\partial \omega$   adding the coefficients of Lemma \ref{lem:nrguom} and to apply Theorem \ref{umk}.

\begin{theorem}\label{capk}
With the notation introduced in Proposition \ref{uk}, Theorem \ref{umk} and Lemma \ref{lem:nrguom}, let $\{c_n\}_{n\in \mathbb{N}}$ be the sequence of real numbers defined by
\[
c_n\equiv 0 \qquad \forall n \in \{0,\dots,d-3\}\, ,\qquad  c_n\equiv-\int_{\partial \omega}\tilde{\lambda}_{n-(d-2)}\, d\sigma+\xi_n \qquad \forall n \geq d-2\, .
\] 
 Then there exists $\varepsilon_\mathrm{c}\in]0,\varepsilon_0]$ such that 
\[
\mathrm{Cap}_\Omega(\varepsilon \overline{\omega},u^a,u^b)=\sum_{n=0}^\infty c_n \varepsilon^n
\]
for all $\varepsilon\in]0,\varepsilon_\mathrm{c}[$. Moreover, the series 
\[
\sum_{n=0}^\infty c_n \varepsilon^n
\] 
converges  absolutely for $\varepsilon \in]-\varepsilon_\mathrm{c},\varepsilon_\mathrm{c}[$ and
\begin{equation}\label{eq:cd-2}
\begin{split}
c_{d-2}=& -\int_{\partial \omega} \frac{u^a(0) u^b(0)}{r_0} \frac{\partial}{\partial \nu_{\omega}}v^-[\partial \omega, \rho^i_0]\, d\sigma=-\frac{u^a(0) u^b(0)}{{r_0}}\int_{\partial \omega}\rho^i_0\, d\sigma=-\frac{u^a(0) u^b(0)}{r_0}\, .
\end{split}
\end{equation}
\end{theorem}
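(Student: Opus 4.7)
The plan is to combine the two series expansions already established (Theorem \ref{umk} for the boundary integral and Lemma \ref{lem:nrguom} for the interior piece) via the decomposition of $\mathrm{Cap}_\Omega(\varepsilon \overline{\omega},u^a,u^b)$ recalled in the introduction to Section \ref{sec5} and derived via the Divergence Theorem in Section \ref{s:intcap}, namely
\begin{equation*}
\mathrm{Cap}_\Omega(\varepsilon \overline{\omega},u^a,u^b)
= -\varepsilon^{d-2}\int_{\partial \omega}\nu_{\omega}(t)\cdot \nabla \bigl(u^a_\varepsilon(\varepsilon t)\bigr) u^b(\varepsilon t)\, d\sigma_t
+ \int_{\varepsilon \omega}\nabla u^a \cdot \nabla u^b\, dx,
\end{equation*}
valid for all $\varepsilon\in ]0,\varepsilon_0[$.

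First I would take $\varepsilon$ in the common interval $]0,\min\{\tilde\varepsilon',\varepsilon_\xi\}[$ and substitute the expansion from Theorem \ref{umk} into the first summand. Since the series $\sum_n\tilde\lambda_n\varepsilon^n$ converges normally in $C^{0,\alpha}(\partial\omega)\hookrightarrow L^1(\partial\omega)$, the linear functional $f\mapsto \int_{\partial\omega}f\,d\sigma$ is continuous and one may exchange it with the sum to obtain
\begin{equation*}
-\varepsilon^{d-2}\int_{\partial\omega}\nu_\omega(t)\cdot\nabla\bigl(u^a_\varepsilon(\varepsilon t)\bigr)u^b(\varepsilon t)\,d\sigma_t
= \sum_{n=0}^\infty\Bigl(-\int_{\partial\omega}\tilde\lambda_n\,d\sigma\Bigr)\varepsilon^{n+d-2}.
\end{equation*}
Adding the expansion from Lemma \ref{lem:nrguom}, which starts at $\varepsilon^d$, and reindexing so that the coefficient of $\varepsilon^n$ coincides with $c_n$ as defined in the statement (with $c_n=0$ for $0\le n\le d-3$, automatic since both contributions vanish there), yields the desired series expansion; the Cauchy product/shift preserves absolute convergence on a possibly smaller interval $]-\varepsilon_\mathrm{c},\varepsilon_\mathrm{c}[$, giving the convergence claim.

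It remains to identify the leading coefficient $c_{d-2}$. Since $\xi_{d-2}=0$, only the $\tilde\lambda_0$ term contributes, so
\begin{equation*}
c_{d-2}= -\int_{\partial\omega}\tilde\lambda_0\,d\sigma
= -\frac{u^a(0)u^b(0)}{r_0}\int_{\partial\omega}\frac{\partial}{\partial\nu_\omega}v^-[\partial\omega,\rho^i_0]\,d\sigma,
\end{equation*}
by the explicit formula for $\tilde\lambda_0$ given at the end of Theorem \ref{umk}. Using the jump formula recalled in Section \ref{prel1},
\begin{equation*}
\nu_\omega\cdot\nabla v^-[\partial\omega,\rho^i_0]_{|\partial\omega}=\tfrac12\rho^i_0+W^*_\omega[\rho^i_0],
\end{equation*}
together with the identity $\tfrac12\rho^i_0-W^*_\omega[\rho^i_0]=0$ from Proposition \ref{rhok}, one obtains $\nu_\omega\cdot\nabla v^-[\partial\omega,\rho^i_0]_{|\partial\omega}=\rho^i_0$. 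Since also $\int_{\partial\omega}\rho^i_0\,d\sigma=1$ by the third equation in the system defining $\rho^i_0$, formula \eqref{eq:cd-2} follows.

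The routine but care-demanding step will be the bookkeeping of index shifts to align the two series and to verify that the coefficients in the range $0\le n\le d-3$ vanish (which relies crucially on $d\ge 3$ and on the fact that $\xi_n=0$ for $n<d$). No new analytical obstacle arises beyond what was already handled in Theorem \ref{umk} and Lemma \ref{lem:nrguom}; the proof is essentially an assembly of these two ingredients.
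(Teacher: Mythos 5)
Your proposal is correct and follows essentially the same approach as the paper, which states just before Theorem \ref{capk} that it suffices to integrate formula \eqref{fuepsmseries} over $\partial\omega$, add the coefficients from Lemma \ref{lem:nrguom}, and apply Theorem \ref{umk}. Your identification of $c_{d-2}$ via the jump relation $\nu_\omega\cdot\nabla v^-[\partial\omega,\rho^i_0]_{|\partial\omega}=\tfrac12\rho^i_0+W^*_\omega[\rho^i_0]$ combined with $\tfrac12\rho^i_0-W^*_\omega[\rho^i_0]=0$ and $\int_{\partial\omega}\rho^i_0\,d\sigma=1$ is exactly the calculation the paper performs in display \eqref{eq:cd-2} (only a minor terminological quibble: adding and shifting two power series is not a Cauchy product, but this does not affect the argument).
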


Our next aim is to better understand the value $r_0$ which appears in formula \eqref{eq:cd-2} and, possibly, to link it to some boundary value problem related to the geometric setting. We begin with the lemma below where $v^+[\partial \omega, \rho^i_0]$ is related to the solution of some exterior Dirichlet problem in $\mathbb{R}^d\setminus \omega$.

\begin{lem}\label{rhoi0}
Let $H^i\in C^{1,\alpha}_{\mathrm{loc}}(\mathbb{R}^d\setminus\omega)$ be the solution of
\begin{equation}\label{auxbvpHi}
\left\{
\begin{array}{ll}
\Delta H^i=0&\text{in }\mathbb{R}^d\setminus\overline{\omega}\,,\\
H^i(t)=1 &\text{for all }t\in\partial\omega\,,\\
\lim_{t\to \infty}H^i(t)=0\,.
\end{array} 
\right.
\end{equation}
Then the restriction $v^+[\partial \omega, \rho^i_0]$ is constant and equal to   $\big((2-d)s_d\lim_{t\to \infty}|t|^{d-2}H^i(t)\big)^{-1}$.
\end{lem}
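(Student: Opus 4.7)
The plan is to exploit the jump relations for single-layer potentials together with the two defining properties of $\rho^i_0$ stated in Proposition \ref{rhok}, namely
\[
\tfrac{1}{2}\rho^i_0-W^*_\omega[\rho^i_0]=0\ \text{on }\partial\omega,\qquad \int_{\partial\omega}\rho^i_0\,d\sigma=1.
\]
First, I would compute the interior normal derivative of $v^+[\partial\omega,\rho^i_0]$ on $\partial\omega$. By the jump formula recalled in Subsection \ref{prel1},
\[
\nu_\omega\cdot\nabla v^+[\partial\omega,\rho^i_0]_{|\partial\omega}=-\tfrac{1}{2}\rho^i_0+W^*_\omega[\rho^i_0]=0.
\]
Since $v^+[\partial\omega,\rho^i_0]$ is harmonic in the connected set $\omega$ and has zero Neumann data on $\partial\omega$, it must be constant; call this constant $c$.

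Next, I would propagate this information to the exterior. By continuity of $v[\partial\omega,\rho^i_0]$ across $\partial\omega$, the exterior trace $v^-[\partial\omega,\rho^i_0]_{|\partial\omega}$ equals the same constant $c$. The function $v^-[\partial\omega,\rho^i_0]$ is harmonic in $\mathbb{R}^d\setminus\overline\omega$ and harmonic at infinity (a standard fact for single-layer potentials in dimension $d\ge 3$), so in particular it tends to $0$ at infinity. Comparing with problem \eqref{auxbvpHi}, uniqueness for the exterior Dirichlet problem yields
\[
v^-[\partial\omega,\rho^i_0](t)=c\,H^i(t)\qquad\forall t\in\mathbb{R}^d\setminus\omega.
\]

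Finally, I would determine $c$ by matching the leading asymptotic behaviors at infinity. Using the normalization $\int_{\partial\omega}\rho^i_0\,d\sigma=1$ together with the expansion
\[
S_d(t-s)=\frac{1}{(2-d)s_d|t|^{d-2}}+O(|t|^{1-d})\qquad\text{as }|t|\to\infty,\ s\in\partial\omega,
\]
one obtains
\[
\lim_{|t|\to\infty}|t|^{d-2}v[\partial\omega,\rho^i_0](t)=\frac{1}{(2-d)s_d}\int_{\partial\omega}\rho^i_0\,d\sigma=\frac{1}{(2-d)s_d}.
\]
Combining with $v^-[\partial\omega,\rho^i_0]=cH^i$ gives
\[
c\,\lim_{|t|\to\infty}|t|^{d-2}H^i(t)=\frac{1}{(2-d)s_d},
\]
from which the claimed formula for $c$ follows. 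No step is genuinely delicate: the main thing to check carefully is the jump relation for $\nabla v^+$ (with the correct sign convention for $\nu_\omega$) and the admissibility of the exterior uniqueness argument, which requires $\mathbb{R}^d\setminus\overline\omega$ to be connected — this is guaranteed by Definition \ref{def:domain}.
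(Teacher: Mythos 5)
Your proof is correct. The argument is sound: the jump relation combined with the integral equation $\tfrac12\rho^i_0-W^*_\omega[\rho^i_0]=0$ gives $\nu_\omega\cdot\nabla v^+[\partial\omega,\rho^i_0]=0$, so $v^+$ is a constant $c$; continuity of the single layer identifies the exterior trace; uniqueness of the exterior Dirichlet problem (with decay at infinity) gives $v^-[\partial\omega,\rho^i_0]=c\,H^i$; and matching the $|t|^{2-d}$ asymptotics via the normalization $\int_{\partial\omega}\rho^i_0\,d\sigma=1$ pins down $c$. The paper reaches the same identity by a slightly different route: rather than invoking exterior uniqueness directly, it quotes the classical representation of an arbitrary decaying exterior harmonic function as $w^-[\partial\omega,\mu]+\text{(const)}\cdot v^-[\partial\omega,\rho^i_0]$, applies it to $u=H^i$, and then observes that the double-layer term decays like $|t|^{1-d}$ and thus drops out of the $|t|^{d-2}$-scaled limit; the remaining multiplicative constant $\bigl(\tfrac{1}{\int_{\partial\omega}d\sigma}\int_{\partial\omega}v[\partial\omega,\rho^i_0]\,d\sigma\bigr)^{-1}$ is then recognized as $c^{-1}$ since $v^+$ is constant. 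Your version avoids the representation formula and is arguably more elementary, using only the uniqueness theorem; the paper's version makes the role of the double-layer contribution explicit, which is the style it reuses elsewhere (e.g.\ Corollary \ref{corrhoi0}). Both buy the same thing with essentially the same ingredients, so the difference is one of packaging rather than substance.
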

\proof Let $u\in C^{1,\alpha}_\mathrm{loc}(\mathbb{R}^d\setminus\omega)$, $\Delta u=0$ in $\mathbb{R}^d\setminus\overline{\omega}$, and $\lim_{t\to \infty}u(t)=0$. Then by classical potential theory there exists $\mu\in C^{1,\alpha}(\partial\omega)$ such that 
\[
u=w^-_{\omega}[\mu]+\frac{\int_{\partial \omega} u \rho^i_0\, d\sigma}{ \frac{1}{\int_{\partial\omega}d\sigma} \int_{\partial \omega}v[\partial \omega,\rho^i_0]\, d\sigma}v^-[\partial \omega,\rho^i_0]
\] 
(cf., {\it e.g.}, Folland \cite[Chap.~3]{Fo95}). Then 
\[
\begin{split}
\lim_{t\to \infty}|t|^{d-2}u(t)&=\lim_{t\to \infty}|t|^{d-2}w^-_{\omega}[\mu](t)+\lim_{t\to \infty}|t|^{d-2}\frac{\int_{\partial \omega} u \rho^i_0\, d\sigma}{\frac{1}{\int_{\partial\omega}d\sigma} \int_{\partial \omega}v[\partial \omega,\rho^i_0]\, d\sigma}v^-[\partial \omega,\rho^i_0](t)\\
&=\lim_{t\to \infty}\frac{\int_{\partial \omega} u \rho^i_0\, d\sigma}{\frac{1}{\int_{\partial\omega}d\sigma}\int_{\partial \omega}v[\partial \omega,\rho^i_0]\, d\sigma}|t|^{d-2}v^-[\partial \omega,\rho^i_0](t)\\
&=\frac{\int_{\partial \omega} u \rho^i_0\, d\sigma}{\frac{1}{\int_{\partial\omega}d\sigma}\int_{\partial \omega}v[\partial \omega,\rho^i_0]\, d\sigma}\frac{1}{(2-d)s_d}\int_{\partial \omega}\rho^i_0\, d\sigma\\
&=\frac{1}{(2-d)s_d}\frac{\int_{\partial \omega} u \rho^i_0\, d\sigma}{\frac{1}{\int_{\partial\omega}d\sigma}\int_{\partial \omega}v[\partial \omega,\rho^i_0]\, d\sigma}\, .\end{split}
\] 
As a consequence,
\[
\begin{split}
\lim_{t\to \infty}|t|^{d-2}H^i(t)&=\frac{1}{(2-d)s_d}\frac{\int_{\partial \omega} 1 \rho^i_0\, d\sigma}{\frac{1}{\int_{\partial \omega}\, d\sigma}\int_{\partial \omega}v[\partial \omega,\rho^i_0]\, d\sigma}\\
&=\frac{1}{(2-d)s_d}\frac{1}{ \frac{1}{\int_{\partial\omega}d\sigma}  \int_{\partial \omega}v[\partial \omega,\rho^i_0]\, d\sigma}\, ,
\end{split}
\] 
 and thus
 \[
 \frac{1}{\frac{1}{\int_{\partial\omega}d\sigma} \int_{\partial \omega}v[\partial \omega,\rho^i_0]\, d\sigma}=(2-d)s_d\lim_{t\to \infty}|t|^{d-2}H^i(t)\, .
 \]
Moreover, by the jump properties of the single layer potential we have $\nu_{\omega}\cdot\nabla v^+[\partial \omega, \rho^i_0]_{|\partial\omega}=0$. Thus $v^+[\partial \omega, \rho^i_0]$ is constant in $\overline{\omega}$ and the validity of the statement follows.
\qed

\begin{cor}\label{corrhoi0}
Let $g\in C^{1,\alpha}(\partial \omega)$. Let $u\in C^{1,\alpha}_{\mathrm{loc}}(\mathbb{R}^d\setminus\omega)$ be the solution of
\[
\left\{
\begin{array}{ll}
\Delta u=0&\text{in }\mathbb{R}^d\setminus\overline{\omega}\,,\\
u(t)=g(t)&\text{for all }t\in\partial\omega\,,\\
\lim_{t\to \infty}u(t)=0\,.
\end{array} 
\right.
\] 
Then 
\[
\lim_{t\to \infty}|t|^{d-2}u(t)=\frac{1}{(2-d)s_d}\frac{\int_{\partial \omega} u \rho^i_0\, d\sigma}{\frac{1}{\int_{\partial\omega}d\sigma}\int_{\partial \omega}v[\partial \omega,\rho^i_0]\, d\sigma}=\int_{\partial \omega} g \rho^i_0\, d\sigma  \lim_{t\to \infty}|t|^{d-2}H^i(t)\, .
\]
\end{cor}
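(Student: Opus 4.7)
The strategy is that this corollary is essentially a by-product of the calculation already performed in the proof of Lemma \ref{rhoi0}: that calculation only used the fact that $u$ is harmonic outside $\overline\omega$ and vanishes at infinity, and specialized to $u=H^i$ only in the very last step. My plan is therefore to rerun the representation argument, extract the first equality for a general boundary datum $g$, and then combine with the conclusion of Lemma \ref{rhoi0} to obtain the second equality.

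More precisely, first I would invoke the potential-theoretic representation quoted in the proof of Lemma \ref{rhoi0} (from Folland, Chap.~3) to write
\[
u=w^-[\partial\omega,\mu]+\frac{\int_{\partial\omega}u\,\rho^i_0\,d\sigma}{\frac{1}{\int_{\partial\omega}d\sigma}\int_{\partial\omega}v[\partial\omega,\rho^i_0]\,d\sigma}\,v^-[\partial\omega,\rho^i_0]
\]
for some $\mu\in C^{1,\alpha}(\partial\omega)$. Since $\omega$ is bounded, $w^-[\partial\omega,\mu](t)=O(|t|^{-(d-1)})$ as $|t|\to\infty$, so its contribution to $\lim_{t\to\infty}|t|^{d-2}u(t)$ vanishes. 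On the other hand, a direct expansion of $S_d(t-y)$ for large $|t|$ yields
\[
\lim_{t\to\infty}|t|^{d-2}v^-[\partial\omega,\rho^i_0](t)=\frac{1}{(2-d)s_d}\int_{\partial\omega}\rho^i_0\,d\sigma=\frac{1}{(2-d)s_d},
\]
where the final equality uses the normalization $\int_{\partial\omega}\rho^i_0\,d\sigma=1$ built into Proposition \ref{rhok}. Combining these two observations and using that $u_{|\partial\omega}=g$ gives the first equality of the statement.

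For the second equality, I would simply substitute the identity proved in Lemma \ref{rhoi0}, namely
\[
\frac{1}{\frac{1}{\int_{\partial\omega}d\sigma}\int_{\partial\omega}v[\partial\omega,\rho^i_0]\,d\sigma}=(2-d)s_d\lim_{t\to\infty}|t|^{d-2}H^i(t),
\]
into the denominator of the first expression; the factor $(2-d)s_d$ cancels against the $\frac{1}{(2-d)s_d}$ coming from the single-layer asymptotics, leaving exactly $\int_{\partial\omega}g\,\rho^i_0\,d\sigma\cdot\lim_{t\to\infty}|t|^{d-2}H^i(t)$. There is no real obstacle here; the only points that need verification are the $|t|^{-(d-1)}$ decay of the double-layer potential of a density on a bounded boundary and the normalization $\int_{\partial\omega}\rho^i_0\,d\sigma=1$, both of which are standard.
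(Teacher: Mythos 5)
Your proof is correct and follows exactly the route the paper intends: the first equality is precisely the intermediate computation in the proof of Lemma \ref{rhoi0} (carried out there for a general harmonic $u$ before specializing to $H^i$), and the second equality follows by substituting the identity for $r_0^{-1}$ established at the end of that proof; the decay $w^-[\partial\omega,\mu](t)=O(|t|^{-(d-1)})$ and the normalization $\int_{\partial\omega}\rho^i_0\,d\sigma=1$ are indeed the only facts used. No gaps.
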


\begin{rem}\label{rem:capepsfirst}
Let $H^i$ be the unique solution  in $C^{1,\alpha}_{\mathrm{loc}}(\mathbb{R}^d\setminus\omega)$ of problem \eqref{auxbvpHi}. Then by Lemma \ref{rhoi0} we have
\[
-\frac{1}{r_0}= (d-2)s_d\lim_{t\to \infty}|t|^{d-2}H^i(t)\, .
\]
Accordingly,
\begin{equation}\label{eq:capepsfirst}
\begin{split}
\mathrm{Cap}_\Omega(\varepsilon \overline{\omega},u^a,u^b)=& u^a(0) u^b(0) (d-2)s_d\lim_{t\to \infty}|t|^{d-2}H^i(t)\varepsilon^{d-2}+\varepsilon^{d-1}\bigg(\sum_{n=d-1}^\infty c_n \varepsilon^{n-(d-1)}\bigg)
\end{split}
\end{equation}
for all $\varepsilon\in]0,\varepsilon_\mathrm{c}[$. 

We now wish to provide an alternative characterization of the quantity
\[
(d-2)s_d\lim_{t\to \infty}|t|^{d-2}H^i(t)\, .
\] 
By the Divergence theorem in exterior domains {for functions which are harmonic at infinity (see Dalla Riva, Lanza de Cristoforis, and Musolino \cite[\S 4.2]{DaLaMu21})}, we have
\[
\begin{split}
\int_{\mathbb{R}^d\setminus \overline{\omega}}|\nabla H^i(t)|^2\, dt&=-\int_{\partial \omega}H^i(t)\frac{\partial}{\partial \nu_{\omega}}H^i(t)\, d\sigma_t\\
&=-\int_{\partial \omega}\frac{\partial}{\partial \nu_{\omega}}H^i(t)\, d\sigma_t\, .
\end{split}
\]
On the other hand, {by Lemma \ref{rhoi0}} one verifies that
\[
H^i=\big((2-d)s_d\lim_{t\to \infty}|t|^{d-2}H^i(t)\big) v^-[\partial \omega, \rho^i_0] \qquad \text{in } \mathbb{R}^d \setminus \omega\, ,
\]
and that
\begin{equation}\label{eq:Hder}
\begin{split}
\frac{\partial}{\partial \nu_{\omega}}H^i&=\big((2-d)s_d\lim_{t\to \infty}|t|^{d-2}H^i(t)\big)\Bigg(\frac{1}{2}\rho^i_0+W^\ast_\omega[\rho^i_0]\Bigg)\\
&=\big((2-d)s_d\lim_{t\to \infty}|t|^{d-2}H^i(t)\big) \rho^i_0 \qquad \text{on } \partial \omega\, .
\end{split}
\end{equation}
As a consequence,
\[
\begin{split}
-\int_{\partial \omega}\frac{\partial}{\partial \nu_{\omega}}H^i(t)\, d\sigma_t&=\big((d-2)s_d\lim_{t\to \infty}|t|^{d-2}H^i(t)\big) \int_{\partial \omega}\rho^i_0 \, d\sigma\\
&=\big((d-2)s_d\lim_{t\to \infty}|t|^{d-2}H^i(t)\big)\, .
\end{split}
\]
Accordingly,
\[
\int_{\mathbb{R}^d\setminus \overline{\omega}}|\nabla H^i(t)|^2\, dt=(d-2)s_d\lim_{t\to \infty}|t|^{d-2}H^i(t)\, .
\]
In other words, the quantity
\[
(d-2)s_d\lim_{t\to \infty}|t|^{d-2}H^i(t)
\]
equals the energy integral of $H^i$, {\it i.e.}, of the unique function in $C^{1,\alpha}_{\mathrm{loc}}(\mathbb{R}^d\setminus \omega)$ which is harmonic in $\mathbb{R}^d \setminus \overline{\omega}$ and at infinity and that is equal to $1$ on $\partial \omega$. {Such quantity is thus equal to the Newtonian capacity $\mathrm{Cap}_{\mathbb{R}^d}(\overline{\omega})$ (see \eqref{eq:Newcap}).}
\end{rem}

{\begin{rem}
In case $\omega$ is equal to the open unit ball $B_1$ in $\mathbb{R}^d$ of center $0$ and radius $1$, one verifies that
\[
H^i(t)=|t|^{2-d}\qquad \forall t \in \mathbb{R}^d\setminus B_1\, .
\]
As a consequence,
\[
(d-2)s_d\lim_{t\to \infty}|t|^{d-2}H^i(t)=(d-2)s_d\, ,
\]
and thus
\[
\mathrm{Cap}_{\mathbb{R}^d}(\overline{B_1})=(d-2)s_d\, .
\]
In particular, if $d=3$, then 
\[
\mathrm{Cap}_{\mathbb{R}^3}(\overline{B_1})=4\pi
\]
\end{rem}}

\section{Asymptotic behavior of $\mathrm{Cap}_\Omega(\varepsilon \overline{\omega},u^a,u^b)$ under vanishing assumption for $u^a$ and $u^b$}\label{s:vanish}

In this section,
we investigate the behavior of $\mathrm{Cap}_\Omega(\varepsilon \overline{\omega},u^a,u^b)$ assuming that the functions $u^a$ and $u^b$ and their derivatives up to a certain order {could} vanish at $0$ and we modify the computation of \cite[\S 5.1]{AbBoLeMu21} and \cite[\S 2.7]{AbLeMu22}. {So we consider the following assumption  
\begin{equation}\label{eq:vanu}
\begin{split}
&\mbox{$u^a$ and $u^b$ are admissible functions as in Definition \ref{d:function},} \\
&\mbox{not identically zero in a neighborhood of $0$.}
\end{split}
\end{equation}
Assumption \eqref{eq:vanu} implies that {$u^a,u^b$ have finite order of vanishing $\overline{k}^a, \overline{k}^b \in \mathbb{N}$ at $0$, that is}
\[
\begin{split}
&D^\gamma u^l(0)=0 \quad \forall |\gamma| <\overline{k}^l\, , \qquad D^{\beta^l} u^l(0)\neq 0 \quad \mbox{for some $\beta^l \in \mathbb{N}^d$ with $|\beta^l|=\overline{k}^l$}\, ,\qquad l=a,b\,.
\end{split}
\]} 
By Remark \ref{rem:capepsfirst}, we already know   the principal term of the asymptotic expansion of $\mathrm{Cap}_\Omega(\varepsilon \overline{\omega},u^a,u^b)$ as $\varepsilon \to 0$ in case $\overline{k}^a= \overline{k}^b =0$ ({\it i.e.}, $u^a(0) u^b(0)\neq 0$). We now wish to investigate the case when at least one between $u^a$ and $u^b$ vanishes at $0$. The computation below notably simplifies if $\overline{k}^a$ or $\overline{k}^b$ is equal to $0$, although it is not necessary to assume that this is the case. 
} By condition \eqref{eq:vanu} and Proposition \ref{thetak} we note that
\begin{equation}\label{eq:vantheta}
(\theta^o_k,\theta^i_k)=(0,0) \quad \forall k < \overline{k}{^a}\, , \qquad \theta^o_{\overline{k}^a}=0\, ,
\end{equation}
and that $\theta^i_{\overline{k}^a}$ is the unique solution in $C^{1,\alpha}(\partial\omega)_0$ of  
\[
\begin{split}
\frac{1}{2}\theta^i_{\overline{k}^a}(t)-W_{\omega}[\theta^i_{\overline{k}^a}](t)&=\sum_{|\beta|=\overline{k}^a}\frac{\overline{k}^a!}{\beta!}t^\beta (D^{\beta}u^a)(0)- \sum_{\substack{\beta \in \mathbb{N}^d\\ |\beta|=\overline{k}^a}} \frac{\overline{k}^a!}{\beta!}\int_{\partial\omega}{s^{\beta}(D^\beta u^a)(0)\rho^i_{0}(s) \, d\sigma_s}\qquad \forall t\in\partial\omega\,,
\end{split}
\]
\textit{i.e.}, 
\begin{equation}\label{eq:thetaooverk}
\begin{split}
\frac{1}{2}\theta^i_{\overline{k}^a}(t)-W_{\omega}[\theta^i_{\overline{k}^a}](t)&=\overline{k}^a! \Bigg (u_{\#,\overline{k}^a}(t)-\int_{\partial\omega}u_{\#,\overline{k}^a}\rho^i_{0}\,d\sigma\Bigg) \qquad \forall t\in\partial\omega\,.
\end{split}
\end{equation}
Then equations \eqref{eq:vantheta}, \eqref{eq:thetaooverk}, and Proposition \ref{uk} imply that
\begin{equation}\label{eq:vanum}
u^a_{\mathrm{m},k}=0 \qquad \forall k <\overline{k}^a\, ,\qquad u^a_{\mathrm{m},\overline{k}^a}=-\frac{1}{\overline{k}^a!}w^-[\partial \omega, \theta^i_{\overline{k}^a}]\, .
\end{equation}
Hence, by the properties of the double layer potential, one verifies that $u^a_{\mathrm{m},\overline{k}^a}$ is the unique solution in $C^{1,\alpha}_{\mathrm{loc}}(\mathbb{R}^d \setminus \omega)$ of 
\[
\left\{
\begin{array}{ll}
\Delta u^a_{\mathrm{m},\overline{k}^a}=0&\text{in }\mathbb{R}^d\setminus\overline{\omega}\,,\\
u^a_{\mathrm{m},\overline{k}^a}(t)=u^a_{\#,\overline{k}^a}(t)-\int_{\partial\omega}u^a_{\#,\overline{k}^a}\rho^i_{0}\,d\sigma&\text{for all }t\in\partial\omega\,,\\
\lim_{t\to \infty}u^a_{\mathrm{m},\overline{k}^a}(t)=0\,.
\end{array} 
\right.
\]
In addition,  by assumption \eqref{eq:vanu} and Proposition \ref{uk} we deduce that
\begin{equation}\label{eq:vang}
g^a_{k}=0 \quad \forall k <\overline{k}^a\, ,\qquad g^a_{\overline{k}^a}=\int_{\partial\omega}u^a_{\#,\overline{k}^a}\rho^i_{0}\,d\sigma\, .
\end{equation}
Then by \eqref{eq:vanu} and by Propostion \ref{uk} we show that
\begin{equation}\label{eq:vanusharp}
u^l_{\#,k}=0 \qquad \forall k <\overline{k}^l\, , \qquad l=a,b\, .
\end{equation}
As a consequence, Proposition \ref{uk} and equations \eqref{eq:vanum}, \eqref{eq:vanusharp} imply
\begin{equation}\label{eq:vantildeu}
\tilde{u}_{k}=0 \qquad \forall k <\overline{k}^a+\overline{k}^b\, ,\qquad \tilde{u}_{\overline{k}^a+\overline{k}^b}=\Bigg( \frac{\partial u^a_{\mathrm{m},\overline{k}^a}}{\partial \nu_\omega}\Bigg)u^b_{\#,\overline{k}^b|\partial \omega}\, .
\end{equation}
By \eqref{eq:vang} and \eqref{eq:vanusharp} we have
\begin{equation}\label{eq:vantildeg}
\tilde{g}_{k}=0 \qquad \forall k <\overline{k}^a+\overline{k}^b\, ,\qquad \tilde{g}_{\overline{k}^a+\overline{k}^b}=g^a_{\overline{k}^a}u^b_{\#,\overline{k}^b|\partial \omega}=
 \bigg(\int_{\partial\omega}u^a_{\#,\overline{k}^a}\rho^i_{0}\,d\sigma \bigg)  u^b_{\#,\overline{k}^b|\partial \omega} \, .
\end{equation}

We now consider the quantities $\tilde{a}_{n}, \tilde{\lambda}_{n}$ introduced in Theorem \ref{umk} for representing the behavior of $\nu_{\omega}(\cdot) \cdot \nabla \big(u^a_{\varepsilon}(\varepsilon\cdot)\big) u^b(\varepsilon \cdot)$. By a direct computation based on \eqref{eq:vantildeu}, \eqref{eq:vantildeg} we have
\[
\tilde{a}_{n}=0 \qquad \forall n <\overline{k}^a+\overline{k}^b\, ,\qquad \tilde{a}_{\overline{k}^a+\overline{k}^b}=\tilde{g}_{\overline{k}^a+\overline{k}^b}\tilde{v}_0=  \tilde{v}_0
\Big(\int_{\partial\omega}u^a_{\#,\overline{k}^a}\rho^i_{0}\,d\sigma \Big) u^b_{\#,\overline{k}^b|\partial \omega} \, ,
 \]
and thus
\begin{equation}\label{eq:vantilde0}
\begin{split}
&\tilde{\lambda}_{n}=0 \qquad \forall n <\overline{k}^a+\overline{k}^b\, ,\\
& \tilde{\lambda}_{\overline{k}^a+\overline{k}^b}=\tilde{u}_{\overline{k}^a+\overline{k}^b}+\frac{\tilde{a}_{\overline{k}^a+\overline{k}^b}}{r_0}= \Bigg( \frac{\partial u^a_{\mathrm{m},\overline{k}^a}}{\partial \nu_\omega}\Bigg)u^b_{\#,\overline{k}^b|\partial \omega}+\frac{1}{r_0} \tilde{v}_0
\Big(\int_{\partial\omega}u^a_{\#,\overline{k}^a}\rho^i_{0}\,d\sigma \Big) u^b_{\#,\overline{k}^b|\partial \omega} \, .
\end{split}
\end{equation}
Moreover, a simple computation shows that
\[
\xi_{n}=0 \qquad \forall n < \overline{k}^a+\overline{k}^b+d-2\, , \qquad \xi_{\overline{k}^a+\overline{k}^b+d-2}= \int_{\omega} \nabla u^a_{\#,\overline{k}^a}\cdot \nabla u^b_{\#,\overline{k}^b} \, dt\, .
\]
Finally, by Theorem \ref{capk} and by integrating equalities \eqref{eq:vantilde0}, we obtain 
\[
\begin{split}
&c_{n}=0 \qquad \forall n <\overline{k}^a+\overline{k}^b+d-2\, ,\\ 
&c_{\overline{k}^a+\overline{k}^b+d-2}=-\int_{\partial \omega}\Big(\tilde{u}_{\overline{k}^a+\overline{k}^b}+\frac{\tilde{a}_{\overline{k}^a+\overline{k}^b}}{r_0}\Big)\, d\sigma+\int_{\omega} \nabla u^a_{\#,\overline{k}^a}\cdot \nabla u^b_{\#,\overline{k}^b} \, dt\\
&\qquad=-\int_{\partial \omega} \Bigg( \frac{\partial u^a_{\mathrm{m},\overline{k}^a}}{\partial \nu_\omega}\Bigg)u^b_{\#,\overline{k}^b|\partial \omega}\, d\sigma-\frac{1}{r_0} \int_{\partial \omega}\tilde{v}_0
u^b_{\#,\overline{k}^b}\, d\sigma \int_{\partial\omega}u^a_{\#,\overline{k}^a}\rho^i_{0}\,d\sigma+\int_{\omega} \nabla u^a_{\#,\overline{k}^a}\cdot \nabla u^b_{\#,\overline{k}^b} \, dt\,\\
&\qquad=-\int_{\partial \omega} \Bigg( \frac{\partial u^a_{\mathrm{m},\overline{k}^a}}{\partial \nu_\omega}\Bigg)u^b_{\#,\overline{k}^b|\partial \omega}\, d\sigma-\frac{1}{r_0} \int_{\partial\omega}u^a_{\#,\overline{k}^a}\rho^i_{0}\,d\sigma \int_{\partial\omega}u^b_{\#,\overline{k}^b}\rho^i_{0}\,d\sigma+\int_{\omega} \nabla u^a_{\#,\overline{k}^a}\cdot \nabla u^b_{\#,\overline{k}^b} \, dt\,.
\end{split}
\]
{
Now let $H_{u^a,\overline{k}^a}\in C^{1,\alpha}_{\mathrm{loc}}(\mathbb{R}^d\setminus\omega)$ be the solution of
\[
\left\{
\begin{array}{ll}
\Delta H_{u^a,\overline{k}^a}=0&\text{in }\mathbb{R}^d\setminus\overline{\omega}\,,\\
H_{u^a,\overline{k}^a}(t)= \int_{\partial\omega}u^a_{\#,\overline{k}^a}\rho^i_{0}\,d\sigma &\text{for all }t\in\partial\omega\,,\\
\lim_{t\to \infty}H_{u^a,\overline{k}^a}=0\,.
\end{array} 
\right.
\]
Then by Remark \ref{rem:capepsfirst} we have
\[
H_{u^a,\overline{k}^a}=\Bigg(\int_{\partial\omega}u^a_{\#,\overline{k}^a}\rho^i_{0}\,d\sigma\Bigg)H^i \qquad \text{in $\mathbb{R}^d\setminus \omega$}\, ,
\]
and accordingly by \eqref{eq:Hder}
\[
\frac{\partial}{\partial \nu_{\omega}}H_{u^a,\overline{k}^a}=\frac{1}{r_0}\Bigg(\int_{\partial\omega}u^a_{\#,\overline{k}^a}\rho^i_{0}\,d\sigma\Bigg)\rho^i_{0}\qquad \text{on $\partial \omega$}\, .
\]
As a consequence,
\[
\frac{1}{r_0} \int_{\partial\omega}u^a_{\#,\overline{k}^a}\rho^i_{0}\,d\sigma \int_{\partial\omega}u^b_{\#,\overline{k}^b}\rho^i_{0}\,d\sigma=\int_{\partial \omega}\Bigg(\frac{\partial H_{u^a,\overline{k}^a}}{\partial \nu_{\omega}}\Bigg)u^b_{\#,\overline{k}^b|\partial \omega}\, d\sigma\, .
\]
Then for $l=a,b$ we denote by $\mathsf{u}^l_{\overline{k}^l}$ the unique solution in $C^{1,\alpha}_{\mathrm{loc}}(\mathbb{R}^d\setminus \omega)$ of
\begin{equation}\label{eq:bvp:mathsfu}
\left\{
\begin{array}{ll}
\Delta \mathsf{u}^l_{\overline{k}^l}=0&\text{in }\mathbb{R}^d\setminus\overline{\omega}\,,\\
\mathsf{u}^l_{\overline{k}^l}(t)=u^l_{\#,\overline{k}^l}(t)&\text{for all }t\in\partial\omega\,,\\
\lim_{t\to \infty}\mathsf{u}^l_{\overline{k}^l}(t)=0\,.
\end{array} 
\right.
\end{equation}
Thus
\[
\mathsf{u}^a_{\overline{k}^a}=u^a_{\mathrm{m},\overline{k}^a}+H_{u^a,\overline{k}^a}\qquad \text{in $\mathbb{R}^d \setminus \omega$}\, ,
\]
and therefore
\[
-\int_{\partial \omega} \Bigg( \frac{\partial u^a_{\mathrm{m},\overline{k}^a}}{\partial \nu_\omega}\Bigg)u^b_{\#,\overline{k}^b|\partial \omega}\, d\sigma-\frac{1}{r_0} \int_{\partial\omega}u^a_{\#,\overline{k}^a}\rho^i_{0}\,d\sigma \int_{\partial\omega}u^b_{\#,\overline{k}^b}\rho^i_{0}\,d\sigma=-\int_{\partial \omega}\Bigg(\frac{\partial \mathsf{u}^a_{\overline{k}^a}}{\partial \nu_\omega}\Bigg)\mathsf{u}^b_{\overline{k}^b}\, d\sigma\, .
\]
On the other hand,  the harmonicity at infinity of $\mathsf{u}^a_{\overline{k}^a}$ and of  $\mathsf{u}^b_{\overline{k}^b}$ and the Divergence Theorem imply that
\[
{\int_{\mathbb{R}^d \setminus \overline{\omega}}\nabla \mathsf{u}^a_{\overline{k}^a} \cdot \nabla \mathsf{u}^b_{\overline{k}^b}}\, dt=-\int_{\partial \omega}\Bigg(\frac{\partial \mathsf{u}^a_{\overline{k}^a}}{\partial \nu_\omega}\Bigg)\mathsf{u}^b_{\overline{k}^b}\, d\sigma\, 
\]
(cf.~Folland \cite[p.~118]{Fo95}, Dalla Riva, Lanza de Cristoforis, and Musolino \cite[Cor.~4.7]{DaLaMu21}). 
Accordingly,
\[
\begin{split}
c_{\overline{k}^a+\overline{k}^b+d-2}=\int_{\mathbb{R}^d \setminus \overline{\omega}}\nabla \mathsf{u}^a_{\overline{k}^a} \cdot \nabla \mathsf{u}^b_{\overline{k}^b}\, dt+\int_{\omega} \nabla u^a_{\#,\overline{k}^a}\cdot \nabla u^b_{\#,\overline{k}^b} \, dt\, .
\end{split}
\]
{Incidentally, we note that if for example $u^b(0)\neq 0$ ({\it i.e.}, if $\overline{k}^b=0$), then $u^b_{\#,\overline{k}^b}=u^b(0)$ (and so $\nabla u^b_{\#,\overline{k}^b}=0$) and $\mathsf{u}^b_{\overline{k}^b}=u^b(0)H^i$. Therefore, if $\overline{k}^b=0$ the term $c_{\overline{k}^a+\overline{k}^b+d-2}$ reduces to ${u}^b(0)\int_{\mathbb{R}^d \setminus \overline{\omega}}\nabla \mathsf{u}^a_{\overline{k}^a} \cdot \nabla H^i\, dt$. Similarly, if $\overline{k}^a=0$ the term $c_{\overline{k}^a+\overline{k}^b+d-2}$ reduces to ${u}^a(0)\int_{\mathbb{R}^d \setminus \overline{\omega}}\nabla \mathsf{u}^b_{\overline{k}^b} \cdot \nabla H^i\, dt$. We also note that if both $\overline{k}^a=0$ and $\overline{k}^b=0$, then for $l=a,b$ we have $u^l_{\#,\overline{k}^l}=u^l(0)$ (and so $\nabla u^l_{\#,\overline{k}^l}=0$) and $\mathsf{u}^l_{\overline{k}^l}=u^l(0)H^i$, and accordingly
\[
\begin{split}
\int_{\mathbb{R}^d \setminus \overline{\omega}}\nabla \mathsf{u}^a_{\overline{k}^a} \cdot \nabla \mathsf{u}^b_{\overline{k}^b}\, dt+\int_{\omega} \nabla u^a_{\#,\overline{k}^a}\cdot \nabla u^b_{\#,\overline{k}^b} \, dt&=u^a(0)u^b(0)\int_{\mathbb{R}^d \setminus \overline{\omega}}|\nabla H^i|^2\, dt\\
&=u^a(0)u^b(0)(d-2)s_d\lim_{t\to \infty}|t|^{d-2}H^i(t)\, .
\end{split}
\]
In other words, the quantity $u^a(0)u^b(0)(d-2)s_d\lim_{t\to \infty}|t|^{d-2}H^i(t)$ can be seen as the specific value of $\int_{\mathbb{R}^d \setminus \overline{\omega}}\nabla \mathsf{u}^a_{\overline{k}^a} \cdot \nabla \mathsf{u}^b_{\overline{k}^b}\, dt+\int_{\omega} \nabla u^a_{\#,\overline{k}^a}\cdot \nabla u^b_{\#,\overline{k}^b} \, dt$ when both $\overline{k}^a$ and $\overline{k}^b$ are equal to $0$.}

As a consequence, under assumption \eqref{eq:vanu}, by Theorem \ref{capk} and formula \eqref{eq:capepsfirst}, we can deduce the validity of the following.

 \begin{theorem}\label{thm:cepsmseries}
Let assumption \eqref{eq:vanu} hold. For  $l=a,b$, let $\mathsf{u}^l_{\overline{k}^l}$ be the unique solution in $C^{1,\alpha}_{\mathrm{loc}}(\mathbb{R}^d\setminus \omega)$ of \eqref{eq:bvp:mathsfu}.Then
 \begin{equation}\label{cepsmseries}
\begin{split}
\mathrm{Cap}_{\Omega}&(\varepsilon \overline{\omega},u^a,u^b)\\
=&\varepsilon^{\overline{k}^a+\overline{k}^b+d-2}\Bigg(\int_{\mathbb{R}^d \setminus \overline{\omega}}\nabla \mathsf{u}^a_{\overline{k}^a} \cdot \nabla \mathsf{u}^b_{\overline{k}^b}\, dt+\int_{\omega} \nabla u^a_{\#,\overline{k}^a}\cdot \nabla u^b_{\#,\overline{k}^b} \, dt\Bigg)+\sum_{n=\overline{k}^a+\overline{k}^b+d-1}^\infty\varepsilon^n c_{n}\, ,
\end{split}
\end{equation} 
for all $\varepsilon\in]0,\varepsilon_\mathrm{c}[$. 
\end{theorem}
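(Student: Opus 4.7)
My plan is to deduce Theorem \ref{thm:cepsmseries} as a consequence of the general expansion of Theorem \ref{capk}, by identifying which coefficients $c_n$ must vanish under assumption \eqref{eq:vanu} and by giving the leading coefficient a geometric interpretation through the exterior Dirichlet problems for $\mathsf{u}^a_{\overline{k}^a}$ and $\mathsf{u}^b_{\overline{k}^b}$. Thus the whole proof will just be the assembly of three blocks: the order-of-vanishing bookkeeping, an explicit computation of the first non-zero coefficient, and a Divergence-Theorem rewriting.

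First, I would propagate the vanishing of $u^a$ and $u^b$ at $0$ through the recursions in Sections \ref{sec3}--\ref{sec4}. Using Proposition \ref{thetak}, assumption \eqref{eq:vanu} forces $(\theta^o_k,\theta^i_k)=(0,0)$ for $k<\overline{k}^a$ and pins down $\theta^i_{\overline{k}^a}$ as the unique solution of \eqref{eq:thetaooverk}; from Proposition \ref{uk} one then deduces that $u^a_{\mathrm{m},k}$, $g^a_k$, and $u^l_{\#,k}$ all vanish in the expected ranges, and identifies $u^a_{\mathrm{m},\overline{k}^a}$ as the unique harmonic function in $\mathbb{R}^d\setminus\overline{\omega}$ decaying at infinity with boundary data $u^a_{\#,\overline{k}^a}-\int_{\partial\omega}u^a_{\#,\overline{k}^a}\rho^i_0\,d\sigma$. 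Combining these vanishings with the products defining $\tilde{u}_k$, $\tilde{g}_k$, $\tilde{a}_k$, $\tilde{\lambda}_k$ in Proposition \ref{uk} and Theorem \ref{umk}, I obtain $\tilde{\lambda}_n=0$ for $n<\overline{k}^a+\overline{k}^b$. A direct inspection of the formula for $\xi_n$ in Lemma \ref{lem:nrguom} gives $\xi_n=0$ for $n<\overline{k}^a+\overline{k}^b+d-2$. By the definition $c_n=-\int_{\partial\omega}\tilde{\lambda}_{n-(d-2)}\,d\sigma+\xi_n$ of Theorem \ref{capk}, this yields $c_n=0$ in the announced range, which already produces the factor $\varepsilon^{\overline{k}^a+\overline{k}^b+d-2}$.

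Next, I would compute $c_{\overline{k}^a+\overline{k}^b+d-2}$ explicitly. Summing $\tilde{u}_{\overline{k}^a+\overline{k}^b}+\tilde{a}_{\overline{k}^a+\overline{k}^b}/r_0$ from the formulas above gives a term $(\partial u^a_{\mathrm{m},\overline{k}^a}/\partial\nu_\omega)\,u^b_{\#,\overline{k}^b}$ on $\partial\omega$ plus a correction $(1/r_0)\,\tilde{v}_0\,(\int_{\partial\omega}u^a_{\#,\overline{k}^a}\rho^i_0\,d\sigma)\,u^b_{\#,\overline{k}^b}$. After integration over $\partial\omega$ and addition of $\xi_{\overline{k}^a+\overline{k}^b+d-2}=\int_\omega\nabla u^a_{\#,\overline{k}^a}\cdot\nabla u^b_{\#,\overline{k}^b}\,dt$, the resulting expression for $c_{\overline{k}^a+\overline{k}^b+d-2}$ is
\[
-\int_{\partial\omega}\frac{\partial u^a_{\mathrm{m},\overline{k}^a}}{\partial\nu_\omega}u^b_{\#,\overline{k}^b}\,d\sigma-\frac{1}{r_0}\int_{\partial\omega}u^a_{\#,\overline{k}^a}\rho^i_0\,d\sigma\int_{\partial\omega}u^b_{\#,\overline{k}^b}\rho^i_0\,d\sigma+\int_\omega\nabla u^a_{\#,\overline{k}^a}\cdot\nabla u^b_{\#,\overline{k}^b}\,dt.
\]

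The final block, which I expect to be the main obstacle, is recognizing the first two boundary integrals as $-\int_{\partial\omega}(\partial\mathsf{u}^a_{\overline{k}^a}/\partial\nu_\omega)\,\mathsf{u}^b_{\overline{k}^b}\,d\sigma$. For this I would introduce the auxiliary harmonic extension $H_{u^a,\overline{k}^a}=(\int_{\partial\omega}u^a_{\#,\overline{k}^a}\rho^i_0\,d\sigma)H^i$ (with $H^i$ from \eqref{auxbvpHi}) and observe, via \eqref{eq:Hder}, that its normal derivative on $\partial\omega$ is $(1/r_0)(\int_{\partial\omega}u^a_{\#,\overline{k}^a}\rho^i_0\,d\sigma)\rho^i_0$; this converts the $1/r_0$ term into $\int_{\partial\omega}(\partial H_{u^a,\overline{k}^a}/\partial\nu_\omega)\,u^b_{\#,\overline{k}^b}\,d\sigma$. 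The identity $\mathsf{u}^a_{\overline{k}^a}=u^a_{\mathrm{m},\overline{k}^a}+H_{u^a,\overline{k}^a}$ on $\mathbb{R}^d\setminus\overline{\omega}$ — which follows by comparing their boundary values on $\partial\omega$, both being harmonic and decaying at infinity — then collapses the two boundary integrals into $-\int_{\partial\omega}(\partial\mathsf{u}^a_{\overline{k}^a}/\partial\nu_\omega)\,\mathsf{u}^b_{\overline{k}^b}\,d\sigma$. Applying the Divergence Theorem in the exterior domain for functions harmonic at infinity (as recalled in Remark \ref{rem:capepsfirst}) rewrites this last boundary integral as $\int_{\mathbb{R}^d\setminus\overline{\omega}}\nabla\mathsf{u}^a_{\overline{k}^a}\cdot\nabla\mathsf{u}^b_{\overline{k}^b}\,dt$, yielding the claimed leading coefficient; all remaining higher-order coefficients survive from Theorem \ref{capk} and give the tail series, finishing the proof.
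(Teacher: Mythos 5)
Your proposal is correct and follows essentially the same route as the paper's derivation in Section \ref{s:vanish}: the same propagation of vanishings through Propositions \ref{thetak}, \ref{uk}, Theorem \ref{umk} and Lemma \ref{lem:nrguom}, the same explicit computation of $c_{\overline{k}^a+\overline{k}^b+d-2}$, the same introduction of $H_{u^a,\overline{k}^a}$ via \eqref{eq:Hder} and the decomposition $\mathsf{u}^a_{\overline{k}^a}=u^a_{\mathrm{m},\overline{k}^a}+H_{u^a,\overline{k}^a}$, and the same Divergence Theorem argument in the exterior domain. No gaps or genuine deviations from the paper's proof.
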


\begin{rem}\label{rem:cepsmseries}
Under assumption \eqref{eq:vanu}, by \eqref{cepsmseries} we have
 \[
\begin{split}
\mathrm{Cap}_{\Omega}&(\varepsilon \overline{\omega},u^a,u^b)\\
=&\varepsilon^{\overline{k}^a+\overline{k}^b+d-2}\Bigg(\int_{\mathbb{R}^d \setminus \overline{\omega}}\nabla \mathsf{u}^a_{\overline{k}^a} \cdot \nabla \mathsf{u}^b_{\overline{k}^b}\, dt+\int_{\omega} \nabla u^a_{\#,\overline{k}^a}\cdot \nabla u^b_{\#,\overline{k}^b} \, dt\Bigg) +o(\varepsilon^{\overline{k}^a+\overline{k}^b+d-2})\, 
 \qquad \text{as }\varepsilon \to 0\, .
\end{split}
\]
Moreover, we note that the coefficient of $\varepsilon^{\overline{k}^a+\overline{k}^b+d-2}$ depends  both on the geometrical properties of the set $\omega$ and on the behavior at $0$ of the functions $u^a$ and $u^b$, but does not depend on $\Omega$.
\end{rem}
}

\section{Asymptotic behavior of the eigenvalues of the Dirichlet-Laplacian in perforated domains} \label{sec6}

It is well known that if $\Omega$ is a bounded open set in $\mathbb{R}^d$, $K$ a compact subset of $\Omega$, and {if we denote} by
\[
0<\lambda_1(\Omega)<\lambda_2(\Omega)\leq \dots \leq \lambda_N(\Omega)\leq \dots
\]
and
\[
0<\lambda_1(\Omega \setminus K)<\lambda_2(\Omega \setminus K)\leq \dots \leq \lambda_N(\Omega \setminus K)\leq \dots
\]
the sequences of the eigenvalues of the Dirichlet-Laplacian in $\Omega$ and in $\Omega \setminus K$, respectively, then $\lambda_N(\Omega\setminus K)$ is close to $\lambda_N(\Omega)$ if and only if the capacity $\mathrm{Cap}_\Omega(K)$ of $K$ in $\Omega$ is small (see Rauch and Taylor \cite{RaTa75}).  A typical example is when {we fix} $\Omega$ and $\omega$ admissible domains, and we set
\[
K=\varepsilon \overline{\omega} \qquad\quad\forall\varepsilon\in]-\varepsilon_\#,\varepsilon_\#[\, ,
\]
with $\varepsilon_\#$ as in \eqref{eq:varepssharp}. 
{We define}
\[
\Omega_\varepsilon\equiv\Omega\setminus (\eps\overline\omega)
\qquad\quad\forall\varepsilon\in]-\varepsilon_\#,\varepsilon_\#[\,,
\]
{as before,} and we wish to study the convergence of the $N$-th Dirichlet eigenvalues $\lambda_N(\Omega_\varepsilon)=\lambda_N(\Omega \setminus (\varepsilon \overline{\omega}))$ to $\lambda_N(\Omega)$ as $\varepsilon \to 0$.

In this section we show how the results on the asymptotic behavior of the generalizations of the capacity can be employed to obtain accurate asymptotic {estimates}
for  $\lambda_N(\Omega_\varepsilon)$ when $\varepsilon \to 0$, both in the case {when}
 $\lambda_N(\Omega)$ is a simple eigenvalue of the Dirichlet-Laplacian in $\Omega$ and {in the case when it is a multiple eigenvalue,} as we have done in \cite{AbBoLeMu21, AbLeMu22} for the planar case. We treat these two situations in two different subsections.

\subsection{Simple eigenvalues}\label{subsec:simple}

We begin our analysis by considering the case when  $\lambda_N(\Omega)$ is a simple eigenvalue of the Dirichlet-Laplacian in the open set $\Omega$

The following result by  Courtois \cite[Proof of Theorem 1.2]{Co95} and Abatangelo, Felli, Hillairet, and L\'ena \cite[Theorem 1.4]{AbFeHiLe19} shows that the $u$-capacity can be succesfully used to study the asymptotic behavior of the eigenvalues $\lambda_N(\Omega \setminus (\varepsilon\overline{\omega}))$ as $\varepsilon \to 0$. 

\begin{theorem}\label{thm:asy:eig} 
Let $\lambda_N(\Omega)$ be a simple eigenvalue of the Dirichlet-Laplacian in a bounded, connected, and open set $\Omega$. Let $u_N$ be a $L^2(\Omega)$-normalized eigenfunction associated to $\lambda_N(\Omega)$ and let $(K_\varepsilon)_{\varepsilon>0}$ be a family of compact sets contained in $\Omega$ concentrating to a compact set $K$ with $\mathrm{Cap}_\Omega(K) = 0$. Then
\begin{equation}\label{eq:asy:eig}
\lambda_N(\Omega \setminus K_\varepsilon)=\lambda_N(\Omega)+\mathrm{Cap}_\Omega(K_\varepsilon,u_N)+o(\mathrm{Cap}_\Omega(K_\varepsilon,u_N))\, , \qquad \text{as $\varepsilon \to 0$}\, .
\end{equation}
\end{theorem}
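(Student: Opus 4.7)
The plan is to prove the asymptotic expansion in two halves: first a min-max upper bound on $\lambda_N(\Omega\setminus K_\varepsilon)$ built from the eigenfunctions of $\Omega$, then a matching lower bound coming from the eigenfunctions of the perforated problem. The starting ingredient in both halves is classical spectral stability (going back to Rauch--Taylor): under the hypotheses that $K_\varepsilon$ concentrates to $K$ and $\mathrm{Cap}_\Omega(K)=0$, one has $\lambda_N(\Omega\setminus K_\varepsilon)\to\lambda_N(\Omega)$, and by simplicity the $L^2$-normalized $N$-th eigenfunction $u_{N,\varepsilon}$ (extended by zero to $\Omega$) converges strongly to $u_N$ in $H^1_0(\Omega)$ up to a sign; this spectral continuity will be used to reduce everything to a first-order perturbation calculation.

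For the upper bound, the natural $N$-dimensional trial subspace of $H^1_0(\Omega\setminus K_\varepsilon)$ is $\mathrm{span}\{\tilde u_1,\dots,\tilde u_N\}$, where $u_1,\dots,u_N$ are $L^2$-orthonormal eigenfunctions of $-\Delta$ in $\Omega$ and $\tilde u_j:=u_j-V_{K_\varepsilon,u_j}$. Using that $V_{K_\varepsilon,u_i}$ is weakly harmonic in $\Omega\setminus K_\varepsilon$ together with the weak eigenvalue equation for $u_j$, one obtains the key identity
\[
\int_\Omega\nabla u_j\cdot\nabla V_{K_\varepsilon,u_i}\,dx=\mathrm{Cap}_\Omega(K_\varepsilon,u_i,u_j)=\lambda_j\int_\Omega u_j V_{K_\varepsilon,u_i}\,dx,
\]
from which the $H^1$- and $L^2$-Gram matrices of the $\tilde u_j$ can be computed explicitly in terms of mixed capacities and of $\|V_{K_\varepsilon,u_j}\|_{L^2}$. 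Plugging these into the Courant--Fischer formula and letting $\varepsilon\to 0$ yields $\lambda_N(\Omega\setminus K_\varepsilon)\le\lambda_N(\Omega)+\mathrm{Cap}_\Omega(K_\varepsilon,u_N)+o(\mathrm{Cap}_\Omega(K_\varepsilon,u_N))$.

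For the matching lower bound, I would decompose $u_{N,\varepsilon}=\alpha_\varepsilon u_N+w_\varepsilon$ with $w_\varepsilon\perp u_N$ in $L^2$ (so $\alpha_\varepsilon\to 1$), plug this into the weak eigenvalue equation for $u_{N,\varepsilon}$, and test against $\tilde u_N\in H^1_0(\Omega\setminus K_\varepsilon)$. Using the identity above and the fact that $\int_\Omega\nabla u_{N,\varepsilon}\cdot\nabla V_{K_\varepsilon,u_N}\,dx=0$ (since $u_{N,\varepsilon}\in H^1_0(\Omega\setminus K_\varepsilon)$ and $V_{K_\varepsilon,u_N}$ is harmonic there), one arrives at a scalar identity of the form
\[
\bigl(\lambda_N(\Omega\setminus K_\varepsilon)-\lambda_N(\Omega)\bigr)\,\alpha_\varepsilon\int_\Omega u_N u_{N,\varepsilon}\,dx=\mathrm{Cap}_\Omega(K_\varepsilon,u_N)+R_\varepsilon,
\]
where the remainder $R_\varepsilon$ involves $w_\varepsilon$ and $V_{K_\varepsilon,u_N}$ and is controlled via simplicity of $\lambda_N(\Omega)$ (spectral gap) together with spectral stability; combined with $\alpha_\varepsilon\to 1$ and $\int u_Nu_{N,\varepsilon}\to 1$, this gives the matching lower bound.

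The principal technical obstacle is verifying that the various error terms in both halves are genuinely $o(\mathrm{Cap}_\Omega(K_\varepsilon,u_N))$ and not merely $o(1)$. Specifically, one needs the quantitative comparison $\|V_{K_\varepsilon,u_j}\|_{L^2}^2=o(\mathrm{Cap}_\Omega(K_\varepsilon,u_N))$, which exploits that $V_{K_\varepsilon,u_j}$ is concentrated near $K$ (a set of zero capacity), via Hardy- or Poincar\'e-type inequalities in perforated domains; and one must show that the mixed capacities $\mathrm{Cap}_\Omega(K_\varepsilon,u_i,u_j)$ with $i\neq j$ are of strictly lower order. This last step uses in an essential way that $\lambda_N(\Omega)$ is \emph{simple}, which is precisely why the same argument does not extend directly to multiple eigenvalues and instead demands the diagonalization procedure leading to Theorem~\ref{t:2}.
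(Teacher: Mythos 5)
The paper does not actually prove Theorem~\ref{thm:asy:eig}: it is cited directly from Courtois~\cite{Co95} (Proof of Theorem~1.2) and Abatangelo--Felli--Hillairet--L\'ena~\cite{AbFeHiLe19} (Theorem~1.4), with no argument supplied. Your sketch is broadly aligned with the strategy of those references — min-max with the trial functions $\tilde u_j=u_j-V_{K_\varepsilon,u_j}$ for the upper bound, first-order perturbation of the resulting Gram matrices, and a test of the perturbed weak eigenvalue equation against $\tilde u_N$ for the lower bound — so there is no competing ``paper's proof'' to compare, but the plan is on the right track.

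Two technical statements in your last paragraph are not correct as written, and the fix is worth spelling out. First, the quantitative estimate that actually goes into the first-order term is $\lVert V_{K_\varepsilon,u_N}\rVert_{L^2}^2=o\bigl(\mathrm{Cap}_\Omega(K_\varepsilon,u_N)\bigr)$, i.e.\ with the \emph{same} eigenfunction on both sides. Your version with $j\neq N$, $\lVert V_{K_\varepsilon,u_j}\rVert_{L^2}^2=o\bigl(\mathrm{Cap}_\Omega(K_\varepsilon,u_N)\bigr)$, is in general false: if $u_j$ ($j<N$) has a lower vanishing order at the hole than $u_N$, one has $\mathrm{Cap}_\Omega(K_\varepsilon,u_j)\gg\mathrm{Cap}_\Omega(K_\varepsilon,u_N)$, and the best that holds is $\lVert V_{K_\varepsilon,u_j}\rVert_{L^2}^2=o\bigl(\mathrm{Cap}_\Omega(K_\varepsilon,u_j)\bigr)$. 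Second, the mixed capacities $\mathrm{Cap}_\Omega(K_\varepsilon,u_i,u_j)$ with $i\neq j$ need \emph{not} be of strictly lower order than $\mathrm{Cap}_\Omega(K_\varepsilon,u_N)$; by Cauchy--Schwarz they are only bounded by $\sqrt{\mathrm{Cap}_\Omega(K_\varepsilon,u_i)\mathrm{Cap}_\Omega(K_\varepsilon,u_j)}$, which can exceed $\mathrm{Cap}_\Omega(K_\varepsilon,u_N)$. What rescues the argument is not a smallness of the off-diagonal entries per se, but the spectral gap $\lambda_{N-1}<\lambda_N$: in perturbation theory for a generalized eigenvalue problem with a non-degenerate unperturbed eigenvalue, the off-diagonal entries contribute only at \emph{second} order, namely through terms proportional to $\lvert\mathrm{Cap}_\Omega(K_\varepsilon,u_i,u_N)\rvert^2/(\lambda_N-\lambda_i)\le\mathrm{Cap}_\Omega(K_\varepsilon,u_i)\,\mathrm{Cap}_\Omega(K_\varepsilon,u_N)/\lvert\lambda_N-\lambda_i\rvert$, which is $o\bigl(\mathrm{Cap}_\Omega(K_\varepsilon,u_N)\bigr)$ because $\mathrm{Cap}_\Omega(K_\varepsilon,u_i)\to 0$. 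This is also exactly where simplicity of $\lambda_N(\Omega)$ enters — it guarantees that the $N$-th diagonal entry of the limiting matrix is an isolated eigenvalue — rather than in controlling the size of the mixed capacities themselves.
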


In view of Theorem \ref{thm:asy:eig}, we can produce an asymptotic expansion of $\lambda_N(\Omega\setminus (\varepsilon \overline{\omega}))$ by combining the expansion of ${\mathrm{Cap}_{\Omega}(\varepsilon \overline{\omega}},u)$ and the asymptotic fomula  \eqref{eq:asy:eig} for the eigenvalues.

Therefore, we fix $\Omega$ and $\omega$ admissible domains and we assume that
\begin{equation}\label{ass:eig}
\begin{split}
&\text{the $N$-th eigenvalue $\lambda_N(\Omega)$ for the Dirichlet-Laplacian  is simple}\\
&\text{and $u_N$ is a $L^2(\Omega)$-normalized eigenfunction related to $\lambda_N(\Omega)$.}
\end{split}
\end{equation} 

As a first step, we formulate our result on the asymptotic behavior of ${\mathrm{Cap}_{\Omega}(\varepsilon \overline{\omega}},u_N)$. Standard elliptic regularity theory  (see for instance Friedman \cite[Thm.~1.2, p.~205]{Fr69}) implies that $u_N$ is analytic in a neighborhood of $0$. Accordingly,  by \eqref{eq:capepsfirst} we have
\[
\begin{split}
\mathrm{Cap}_\Omega(\varepsilon \overline{\omega},u_N)=&\mathrm{Cap}_\Omega(\varepsilon \overline{\omega},u_N,u_N)\\
=&(u_N(0))^2
{\mathrm{Cap}_{\mathbb{R}^d}(\overline\omega)}\varepsilon^{d-2}+\varepsilon^{d-1}\bigg(\sum_{n=d-1}^\infty c_n \varepsilon^{n-(d-1)}\bigg)
\end{split}
\]
for $\varepsilon$ positive and small, where the sequence of coefficients $\{c_{n}\}_{n\in\mathbb{N}}$ is as in Theorem \ref{capk} and 
{$\mathrm{Cap}_{\mathbb{R}^d}(\overline\omega)$ is as in Equation \eqref{eq:Newcap}.}

Then by formula  \eqref{eq:asy:eig} we immediately deduce the validity of the following result.

\begin{theorem}\label{thm:eig1}
Let assumption \eqref{ass:eig} hold.Then
\begin{equation}\label{eq:eign1:1}
\begin{split}
\lambda_N(\Omega&\setminus (\varepsilon \overline{\omega}))\\&=\lambda_N(\Omega)+(u_N(0))^2 {\mathrm{Cap}_{\mathbb{R}^d}(\overline{\omega})}\varepsilon^{d-2}+o\Big(\varepsilon^{d-2}\Big)\qquad \text{as $\varepsilon \to 0^+$}\, {.}
\end{split}
\end{equation}
\end{theorem}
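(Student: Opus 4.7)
The plan is to combine two ingredients that are already essentially available in the paper: the asymptotic formula for perturbed eigenvalues of Courtois and Abatangelo--Felli--Hillairet--L\'ena (Theorem \ref{thm:asy:eig}) and the series expansion of the $u$-capacity derived from Theorem \ref{capk} together with Remark \ref{rem:capepsfirst}. The strategy is simply to plug the expansion of $\mathrm{Cap}_\Omega(\varepsilon\overline{\omega},u_N)$ into the abstract asymptotic formula for $\lambda_N$.

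First, I would verify that $u_N$ fits the framework of the capacity expansion, \emph{i.e.}, that $u_N$ is an admissible function in the sense of Definition \ref{d:function}. Since $u_N \in H^1_0(\Omega) \subset H^1(\Omega)$ and $-\Delta u_N = \lambda_N(\Omega) u_N$ in $\Omega$, standard elliptic regularity (see Friedman \cite[Thm.~1.2, p.~205]{Fr69}) yields that $u_N$ is real-analytic in a neighborhood of any interior point of $\Omega$, in particular near $0$. Hence $u_N$ is admissible and Theorem \ref{capk} applies with $u^a=u^b=u_N$.

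Second, I would specialize formula \eqref{eq:capepsfirst} with $u^a=u^b=u_N$ and use the identification in Remark \ref{rem:capepsfirst} between $(d-2)s_d\lim_{t\to\infty}|t|^{d-2}H^i(t)$ and $\mathrm{Cap}_{\mathbb{R}^d}(\overline{\omega})$ to obtain
\begin{equation*}
\mathrm{Cap}_\Omega(\varepsilon\overline{\omega},u_N) = (u_N(0))^2\,\mathrm{Cap}_{\mathbb{R}^d}(\overline{\omega})\,\varepsilon^{d-2} + O(\varepsilon^{d-1}) \qquad \text{as } \varepsilon\to 0^+.
\end{equation*}
In particular, $\mathrm{Cap}_\Omega(\varepsilon\overline{\omega}) \to 0$ as $\varepsilon\to 0^+$ (apply Theorem \ref{capk} with $u^a=u^b=1$), so that the hypothesis of Theorem \ref{thm:asy:eig} is satisfied with $K_\varepsilon = \varepsilon\overline{\omega}$. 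This theorem then yields
\begin{equation*}
\lambda_N(\Omega\setminus(\varepsilon\overline{\omega})) = \lambda_N(\Omega) + \mathrm{Cap}_\Omega(\varepsilon\overline{\omega},u_N) + o\!\left(\mathrm{Cap}_\Omega(\varepsilon\overline{\omega},u_N)\right),
\end{equation*}
and substituting the capacity expansion produces exactly \eqref{eq:eign1:1} whenever $u_N(0)\neq 0$, since in that case $o(\mathrm{Cap}_\Omega(\varepsilon\overline{\omega},u_N))=o(\varepsilon^{d-2})$ absorbs both the $O(\varepsilon^{d-1})$ remainder and the little-$o$ term.

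The only mild subtlety lies in the degenerate case $u_N(0)=0$. Then the leading coefficient in \eqref{eq:eign1:1} vanishes, so one must independently ensure that $\mathrm{Cap}_\Omega(\varepsilon\overline{\omega},u_N) = o(\varepsilon^{d-2})$. This however follows from the refined expansion of Theorem \ref{thm:cepsmseries}: if the order of vanishing of $u_N$ at $0$ is $\overline{k}\geq 1$, then $\mathrm{Cap}_\Omega(\varepsilon\overline{\omega},u_N) = O(\varepsilon^{2\overline{k}+d-2}) = o(\varepsilon^{d-2})$, and the statement still holds trivially. Thus the main work is already encapsulated in Theorems \ref{thm:asy:eig} and \ref{capk}; the proof of Theorem \ref{thm:eig1} reduces to the verification of the analyticity of $u_N$ near $0$ and a direct substitution. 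No genuine obstacle arises.
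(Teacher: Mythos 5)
Your proposal is correct and follows essentially the same route as the paper: invoke elliptic regularity to confirm $u_N$ is admissible, substitute the capacity expansion \eqref{eq:capepsfirst} (with Remark \ref{rem:capepsfirst}) into the abstract asymptotic formula \eqref{eq:asy:eig} of Theorem \ref{thm:asy:eig}. The only minor observation is that your detour through Theorem \ref{thm:cepsmseries} for the case $u_N(0)=0$ is unnecessary, since the expansion from Theorem \ref{capk} already gives $c_{d-2}=0$ in that case and hence $\mathrm{Cap}_\Omega(\varepsilon\overline{\omega},u_N)=O(\varepsilon^{d-1})=o(\varepsilon^{d-2})$ directly.
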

The above result agrees with the one of Maz'ya, Nazarov, and Plamenevski\u\i  \ \cite{MaNaPl84} for $N=1$ and $d=3$, {with that of Ozawa \cite{Oz81b} for $d=2,3$, and with that of Flucher \cite{Fl94} if $d\geq2$}. 

{If we assume} that 
\begin{equation}\label{eq:van:uN}
u_N(0)=0\, ,
\end{equation}
then formula \eqref{eq:eign1:1} of Theorem \ref{thm:eig1}  reduces to
\[
\begin{split}
\lambda_N(\Omega\setminus (\varepsilon \overline{\omega}))=\lambda_N(\Omega)+o\Big( \varepsilon^{d-2}\Big)\qquad \text{as $\varepsilon \to 0^+$}\, .
\end{split}
\]
Therefore, if \eqref{eq:van:uN} {holds,} 
\begin{equation}\label{eq:vanuN_0}
\begin{split}
&\text{there exists $\overline{k} \in \mathbb{N}\setminus \{0\}$ such that}\  D^\gamma u_N(0)=0 \qquad \forall |\gamma| <\overline{k}\\
&\text{and that $D^\beta u_N(0)\neq 0$ for some $\beta \in \mathbb{N}^d$ with $|\beta|=\overline{k}$}\, .
\end{split}
\end{equation}
Then we set
\begin{equation}\label{eq:uNsharp_0}
u_{N,\#,\overline{k}}(t)\equiv\sum_{\substack{\beta\in \mathbb{N}^d\\ |\beta|=\overline{k}}}\frac{D^\beta u_N(0)}{\beta !}t^\beta \qquad\qquad\quad  \forall t\in  \mathbb{R}^d\, ,
\end{equation}
and we denote by $\mathsf{u}_{N,\overline{k}}$ the unique solution in $C^{1,\alpha}_{\mathrm{loc}}(\mathbb{R}^d\setminus \omega)$ of
\begin{equation}\label{eq:uNbf_0}
\left\{
\begin{array}{ll}
\Delta \mathsf{u}_{N,\overline{k}}=0&\text{in }\mathbb{R}^d\setminus\overline{\omega}\,,\\
\mathsf{u}_{N,\overline{k}}(t)=u_{N,\#,\overline{k}}(t)&\text{for all }t\in\partial\omega\,,\\
 \lim_{t\to \infty}\mathsf{u}_{N,\overline{k}}(t)=0\,.
\end{array} 
\right.
\end{equation}
Hence, Remark \ref{rem:cepsmseries} implies that{
\[
\begin{split}
\mathrm{Cap}_{\Omega}&(\varepsilon \overline{\omega},u_N)=\mathrm{Cap}_{\Omega}(\varepsilon \overline{\omega},u_N,u_N)\\
=&\varepsilon^{2\overline{k}+d-2}\Bigg(\int_{\mathbb{R}^d \setminus \overline{\omega}}|\nabla \mathsf{u}_{N,\overline{k}}|^2\, dt+\int_{\omega} |\nabla u_{N,\#,\overline{k}}|^2 \, dt\Bigg) +o(\varepsilon^{2\overline{k}+d-2})\, \\
 =&\varepsilon^{2\overline{k}+d-2}\mathfrak{C}(\omega, u_{N,\#,\overline{k}})+o(\varepsilon^{2\overline{k}+d-2})\, 
 \qquad \text{as }\varepsilon \to 0\, 
\end{split}\]
(see \eqref{eq:frakC}).} Thus,  by formula  \eqref{eq:asy:eig} of Theorem \ref{thm:asy:eig} we  deduce the  following  result.
\begin{theorem}\label{thm:eig2}
Let {assumption \eqref{ass:eig}}
hold. Let {$\overline{k}$ be as in \eqref{eq:vanuN_0} and} $u_{N,\#,\overline{k}}$ be as in \eqref{eq:uNsharp_0}. Let  $\mathsf{u}_{N,\overline{k}}$ be the unique solution in $C^{1,\alpha}_{\mathrm{loc}}(\mathbb{R}^d\setminus \omega)$ of \eqref{eq:uNbf_0}. Then
\begin{equation}\label{eq:eign2:1}
\begin{split}
\lambda_N(\Omega&\setminus (\varepsilon \overline{\omega}))\\&=\lambda_N(\Omega)+\varepsilon^{2\overline{k}+d-2}\Bigg(\int_{\mathbb{R}^d \setminus \overline{\omega}}|\nabla \mathsf{u}_{N,\overline{k}}|^2\, dt
+\int_{\omega} |\nabla u_{N,\#,\overline{k}}|^2 \, dt\Bigg) +o(\varepsilon^{2\overline{k}+d-2})\, 
 \qquad \text{as }\varepsilon \to 0\, .
\end{split}
\end{equation}
\end{theorem}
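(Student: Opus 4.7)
The plan is to combine the two key ingredients already developed in the paper: the general asymptotic formula for eigenvalues in terms of the $u$-capacity (Theorem \ref{thm:asy:eig}), and the explicit asymptotic expansion of the $u$-capacity under the vanishing assumption (Theorem \ref{thm:cepsmseries} and Remark \ref{rem:cepsmseries}). The argument in essence is just a concatenation of these two results, specialized to $u^a=u^b=u_N$.

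First I would verify that the hypotheses of both ingredients apply. Since $u_N$ is an eigenfunction of the Dirichlet-Laplacian on $\Omega$, by standard elliptic regularity (as cited, Friedman \cite{Fr69}) $u_N$ is real-analytic in a neighborhood of $0$, and obviously $u_N \in H^1(\Omega)$, so $u_N$ is an admissible function in the sense of Definition \ref{d:function}. The vanishing hypothesis \eqref{eq:vanuN_0} means that $u_N$ has finite order of vanishing $\overline{k}$ at $0$, so that assumption \eqref{eq:vanu} of Section \ref{s:vanish} holds with $u^a=u^b=u_N$ and $\overline{k}^a=\overline{k}^b=\overline{k}$.

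Next I would apply Remark \ref{rem:cepsmseries} (equivalently Theorem \ref{thm:cepsmseries}) with $u^a=u^b=u_N$. In this specialization, $u^a_{\#,\overline{k}^a}=u^b_{\#,\overline{k}^b}=u_{N,\#,\overline{k}}$ (as defined in \eqref{eq:uNsharp_0}), and $\mathsf{u}^a_{\overline{k}^a}=\mathsf{u}^b_{\overline{k}^b}=\mathsf{u}_{N,\overline{k}}$ (the unique solution of \eqref{eq:uNbf_0}). This gives immediately
\[
\mathrm{Cap}_\Omega(\varepsilon\overline{\omega},u_N)=\varepsilon^{2\overline{k}+d-2}\Bigl(\int_{\R^d\setminus\overline{\omega}}|\nabla \mathsf{u}_{N,\overline{k}}|^2\,dt+\int_{\omega}|\nabla u_{N,\#,\overline{k}}|^2\,dt\Bigr)+o(\varepsilon^{2\overline{k}+d-2})
\]
as $\varepsilon\to 0^+$. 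The coefficient is $\mathfrak{C}(\omega,u_{N,\#,\overline{k}})>0$, as noted in \eqref{eq:frakC}, which is important because it guarantees that the leading term does not vanish, so the $o(\cdot)$ in Theorem \ref{thm:asy:eig} can be absorbed.

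Finally I would apply Theorem \ref{thm:asy:eig} with $K_\varepsilon=\varepsilon\overline{\omega}$, concentrating to $K=\{0\}$. Since $\{0\}$ has zero capacity in $\Omega$ in dimension $d\geq 3$, the hypotheses are satisfied and we get
\[
\lambda_N(\Omega\setminus(\varepsilon\overline{\omega}))=\lambda_N(\Omega)+\mathrm{Cap}_\Omega(\varepsilon\overline{\omega},u_N)+o(\mathrm{Cap}_\Omega(\varepsilon\overline{\omega},u_N)).
\]
Substituting the expansion of the $u_N$-capacity obtained in the previous step, and using that the leading coefficient is strictly positive so that $o(\mathrm{Cap}_\Omega(\varepsilon\overline{\omega},u_N))=o(\varepsilon^{2\overline{k}+d-2})$, we obtain exactly \eqref{eq:eign2:1}.

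There is no real obstacle here: the substantive work has already been done in the previous sections, where the asymptotic expansion of $\mathrm{Cap}_\Omega(\varepsilon\overline{\omega},u^a,u^b)$ was extracted via the Functional Analytic Approach. The only small point requiring attention is checking that $\mathfrak{C}(\omega,u_{N,\#,\overline{k}})>0$ (already observed after \eqref{eq:frakC}), which ensures that the higher-order remainder from Theorem \ref{thm:asy:eig} is indeed negligible compared with the principal term and can be rewritten as $o(\varepsilon^{2\overline{k}+d-2})$.
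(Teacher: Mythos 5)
Your proposal is correct and follows exactly the route the paper takes: combine Remark \ref{rem:cepsmseries} (equivalently Theorem \ref{thm:cepsmseries}) applied to $u^a=u^b=u_N$ with Theorem \ref{thm:asy:eig}, using the strict positivity of $\mathfrak{C}(\omega,u_{N,\#,\overline{k}})$ to convert $o(\mathrm{Cap}_\Omega(\varepsilon\overline\omega,u_N))$ into $o(\varepsilon^{2\overline k+d-2})$. Your additional verifications (elliptic regularity giving admissibility of $u_N$, and the zero capacity of the limiting point) are accurate and match the paper's own justifications.
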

{Clearly, Theorems \ref{thm:eig1} and \ref{thm:eig2} can be restated as Theorem \ref{t:1} of the Introduction.}
\begin{rem}\label{rem:k0}
By the computation of Section \ref{s:vanish}, we note that 
{\[
\begin{split}
\int_{\mathbb{R}^d \setminus \overline{\omega}}|\nabla \mathsf{u}_{N,0}|^2\, dt
+\int_{\omega} |\nabla u_{N,\#,0}|^2 \, dt&=(u_N(0))^2 (d-2)s_d\lim_{t\to \infty}|t|^{d-2}H^i(t)\\
&=(u_N(0))^2\mathrm{Cap}_{\mathbb{R}^d}(\overline{\omega})\, ,
\end{split}\]}
and thus equation \eqref{eq:eign2:1} holds also when the {order of vanishing} $\overline{k}$ of $u_N$ is equal to $0$. 
\end{rem}

\subsection{Multiple eigenvalues}\label{subsec:multiple}

In the previous subsection, we have shown the asymptotic formula \eqref{eq:eign2:1} for simple eigenvalues. In this subsection, instead, we follow the lines of the arguments of \cite{AbLeMu22} and we consider the case where the $N$-th eigenvalue $\lambda_N(\Omega)$ for the Dirichlet-Laplacian  is multiple.

So let $\lambda_N(\Omega)$ {be an eigenvalue} of multiplicity $m>1$ of
{the Dirichlet-Laplacian} in $\Omega$  and let $E(\lambda_N(\Omega))$ be the associated eigenspace. In particular, we have
\[
\lambda_{N-1}(\Omega)<\lambda_N(\Omega)=\lambda_{N+j}(\Omega)<\lambda_{N+m}(\Omega) \qquad \forall j=0,\dots,m-1\, .
\]

By \cite[Appendix A]{AbLeMu22}, we have the following result on the decomposition of $E(\lambda_N(\Omega))$.

\begin{prop}\label{prop:DecompES} There exists a decomposition of $E(\lambda_N(\Omega))$ into a sum of orthogonal subspaces
\[E(\lambda_N(\Omega))=E_1\oplus\dots\oplus E_p\] 
and an associated finite decreasing sequence of integers
\[k_1>\dots>k_p\ge0\]
such that, for all $1\le j \le p$, a function in $E_j\setminus\{0\}$ has the order of vanishing $k_j$ at $0$. In addition, such a decomposition is unique. We call it the \emph{order decomposition} of $E(\lambda_N(\Omega))$. 
\end{prop}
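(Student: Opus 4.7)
The plan is to derive both existence and uniqueness of the decomposition from a canonical filtration of $E(\lambda_N(\Omega))$ by the order of vanishing at $0$. First, I would note that every $u\in E(\lambda_N(\Omega))$, being a solution of $-\Delta u=\lambda_N(\Omega)u$ in $\Omega$, is real-analytic in a neighborhood of the interior point $0$. By unique continuation for second-order elliptic operators, the vanishing of all derivatives $D^\gamma u(0)$ forces $u\equiv0$ on $\Omega$, so the order of vanishing $\kappa(u)$ is a well-defined nonnegative integer for every nonzero $u\in E(\lambda_N(\Omega))$.

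Next, I would consider the linear subspaces
\[F_k\equiv\{u\in E(\lambda_N(\Omega))\colon \kappa(u)\ge k\}\qquad\forall k\in\mathbb{N},\]
which form a decreasing chain $E(\lambda_N(\Omega))=F_0\supseteq F_1\supseteq\cdots$ with trivial intersection. Since $E(\lambda_N(\Omega))$ has finite dimension $m$, only finitely many terms are distinct; I would let $k_1>\cdots>k_p\ge0$ enumerate the integers at which a strict drop occurs, i.e., $F_{k_j}\supsetneq F_{k_j+1}$, with the convention that $F_{k_1+1}=\{0\}$ and $F_{k_p}=E(\lambda_N(\Omega))$. Defining $E_j$ to be the $L^2(\Omega)$-orthogonal complement of $F_{k_j+1}$ inside $F_{k_j}$ and iterating the orthogonal splitting $F_{k_j}=E_j\oplus F_{k_j+1}=E_j\oplus F_{k_{j-1}}$ for $j\ge2$, one obtains $E(\lambda_N(\Omega))=E_1\oplus\cdots\oplus E_p$ as an orthogonal direct sum. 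Every nonzero $u\in E_j$ then has $\kappa(u)\ge k_j$ by membership in $F_{k_j}$, while $\kappa(u)$ cannot exceed $k_j$, since otherwise $u$ would lie in $E_j\cap F_{k_j+1}$ and be orthogonal to itself.

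For uniqueness, suppose $E(\lambda_N(\Omega))=E_1'\oplus\cdots\oplus E_p'$ is another orthogonal decomposition meeting the same prescription. The key observation is that the filtration $(F_{k_j})$ is recoverable from any such decomposition: if $u=v_1+\cdots+v_p$ is the corresponding splitting with $v_i\in E_i'$ and $i_0$ denotes the largest index for which $v_{i_0}\ne0$, then the Taylor expansion of $u$ at $0$ has principal part $(v_{i_0})_\#$, because nonzero homogeneous polynomials of pairwise distinct degrees $k_1>\cdots>k_p$ cannot cancel, and the lowest-degree principal part among the $(v_i)_\#$ is that of $v_{i_0}$. Consequently $\kappa(u)=k_{i_0}$ and $F_{k_j}=\bigoplus_{i\le j}E_i'$. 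Orthogonality then forces $E_j'$ to be the orthogonal complement of $\bigoplus_{i<j}E_i'$ inside $F_{k_j}$ (with the empty sum understood as $\{0\}$), coinciding with the $E_j$ constructed above.

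The analytic input required is minimal---only real-analyticity of eigenfunctions near $0$ and unique continuation inside $\Omega$---and the main substantive step is the no-cancellation property for Taylor principal parts of distinct degrees. I expect the chief point demanding care to be the handling of the endpoint conventions at $j=1$ and $j=p$; all remaining steps amount to linear algebra built on the filtration $(F_k)$, and the overall argument follows the blueprint of the analogous two-dimensional result in \cite[Appendix A]{AbLeMu22}.
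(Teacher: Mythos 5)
Your proof is correct and in fact takes the same route as the source the paper relies on: the present paper gives no independent argument here but simply invokes \cite[Appendix A]{AbLeMu22}, and your filtration-by-order-of-vanishing construction with iterated orthogonal complements, together with the no-cancellation observation for principal parts of distinct degrees, is precisely the argument carried out there. No gaps or missteps to report.
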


{We proceed as in \cite{AbLeMu22} and we introduce some notation.  So let  {$u,v \in E(\lambda_N(\Omega))\setminus \{0\}$. Then $u$ and $v$ are admissible functions in the sense of Definition \ref{d:function} and are not identically zero in a neighborhood of $0$. Then $u$ has a finite order of vanishing $\kappa(u)$ at $0$, with a corresponding principal part $u_\#$, meaning that 
\[
\begin{split}
&\
 D^\gamma u(0)=0 \qquad \forall |\gamma| <\kappa(u)\\
&\text{and that $D^\beta u(0)\neq 0$ for some $\beta \in \mathbb{N}^d$ with $|\beta|=\kappa(u)$}\, ,
\end{split}
\]
and
\begin{equation}\label{eq:uNsharp}
u_{\#}(t)\equiv\sum_{\substack{\beta\in \mathbb{N}^d\\ |\beta|=\kappa(u)}}\frac{D^\beta u(0)}{\beta !}t^\beta \qquad\qquad\quad  \forall t\in  \mathbb{R}^d\, .
\end{equation}
Consistent with Equation \eqref{eq:uNbf_0},} we denote by $\mathsf{u}$ the unique solution in $C^{1,\alpha}_{\mathrm{loc}}(\mathbb{R}^d\setminus \omega)$ of
\[
\left\{
\begin{array}{ll}
\Delta \mathsf{u}=0&\text{in }\mathbb{R}^d\setminus\overline{\omega}\,,\\
\mathsf{u}(t)=u_{\#}(t)&\text{for all }t\in\partial\omega\,,\\
 \lim_{t\to \infty}\mathsf{u}(t)=0\,.
\end{array} 
\right.
\]
 
We {write the order
$\kappa(v)$ and the functions $v_{\#}$,  $\mathsf{v}$ also for the function $v$.}  As in \cite{AbLeMu22}, we define
\[
\begin{split}
	\mathcal Q(u,v)\equiv&\int_{\mathbb{R}^d \setminus \overline{\omega}}\nabla \mathsf{u} \cdot \nabla \mathsf{v}\, dt+\int_{\omega} \nabla u_{\#}\cdot \nabla u_{\#} \, dt\, .
\end{split}
\]
{If {instead} $u$ or $v$ is identically zero, we set $\mathcal Q(u,v)=0$.} We  observe that $\mathcal Q$ is \emph{not} a bilinear form, but the restriction of $\mathcal Q$ to suitable subspaces of $E(\lambda_N(\Omega))$ defines bilinear forms (cf.~Definition \ref{def:Qj} below).}

By Remark \ref{rem:capepsfirst} and Theorem \ref{thm:cepsmseries}, we can deduce the following result where we link the asymptotic behavior of  $\mathrm{Cap}_\Omega(\eps\overline\omega, u,v)$ with $\mathcal Q(u,v)$.

\begin{cor}\label{cor:asymptCap} Let us fix {$u,v \in E(\lambda_N(\Omega))\setminus \{0\}$}. Then,
	\begin{equation*}
		\mathrm{Cap}_\Omega(\eps\overline\omega, u,v)=\eps^{\kappa(u)+\kappa(v)+d-2}\mathcal Q(u,v)+o\left(\eps^{\kappa(u)+\kappa(v)+d-2}\right)\mbox{ as }\eps\to 0^+.
	\end{equation*}
\end{cor}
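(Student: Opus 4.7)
The plan is to apply Theorem~\ref{thm:cepsmseries} (together with Remark~\ref{rem:cepsmseries}) with $u^a=u$ and $u^b=v$, so that the corollary reduces to a matching of the leading coefficient with $\mathcal Q(u,v)$. The main preparatory task is to verify that $u$ and $v$ satisfy assumption~\eqref{eq:vanu}.

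First I would observe that every $u\in E(\lambda_N(\Omega))$ lies in $H_0^1(\Omega)\subset H^1(\Omega)$ and solves $-\Delta u = \lambda_N(\Omega) u$ in $\Omega$. By classical interior elliptic regularity (see, \emph{e.g.}, Friedman~\cite[Thm.~1.2, p.~205]{Fr69}), $u$ is real-analytic in $\Omega$, and in particular in a neighborhood of $0$, so $u$ is admissible in the sense of Definition~\ref{d:function}. Since $u\not\equiv 0$ and $\Omega$ is connected, the identity principle for real-analytic functions rules out that $u$ vanish on any open subset of $\Omega$; hence $u$ is not identically zero in a neighborhood of $0$ and admits a finite order of vanishing $\kappa(u)$ at $0$ with principal part $u_\#$ as defined in \eqref{eq:uNsharp}. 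The same applies to $v$. Therefore the pair $(u,v)$ fulfills assumption~\eqref{eq:vanu}, with $\overline{k}^a=\kappa(u)$ and $\overline{k}^b=\kappa(v)$, and the corresponding harmonic extensions $\mathsf{u}^a_{\overline{k}^a}$ and $\mathsf{u}^b_{\overline{k}^b}$ defined by \eqref{eq:bvp:mathsfu} coincide with $\mathsf{u}$ and $\mathsf{v}$.

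With the hypotheses verified, Theorem~\ref{thm:cepsmseries} and Remark~\ref{rem:cepsmseries} immediately give
\[
\mathrm{Cap}_\Omega(\eps\overline\omega,u,v)=\eps^{\kappa(u)+\kappa(v)+d-2}\Bigl(\int_{\mathbb{R}^d\setminus\overline\omega}\nabla\mathsf{u}\cdot\nabla\mathsf{v}\,dt+\int_{\omega}\nabla u_\#\cdot\nabla v_\#\,dt\Bigr)+o\bigl(\eps^{\kappa(u)+\kappa(v)+d-2}\bigr),
\]
which by the definition of $\mathcal Q(u,v)$ is exactly the desired expansion. The only edge case to flag is $\kappa(u)=0$ (or $\kappa(v)=0$): then $u_\#$ is the constant $u(0)$, its gradient vanishes, and the corresponding interior integral disappears; Remark~\ref{rem:k0} confirms that the resulting value of $\mathcal Q(u,v)$ matches $u(0)v(0)\,\mathrm{Cap}_{\mathbb{R}^d}(\overline\omega)$, so the formula remains consistent with the expansion \eqref{eq:capepsfirst} obtained in Remark~\ref{rem:capepsfirst}. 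No real obstacle is expected: the substantive work is already done in Section~\ref{s:vanish}, and this corollary is a direct specialization of Theorem~\ref{thm:cepsmseries} to Dirichlet-Laplacian eigenfunctions, the only genuinely new input being the analyticity and unique-continuation property of such eigenfunctions at the puncture point.
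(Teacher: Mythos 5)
Your argument is correct and matches the paper's approach exactly. The paper similarly observes, in the paragraph where $\mathcal Q$ is introduced, that eigenfunctions in $E(\lambda_N(\Omega))\setminus\{0\}$ are admissible and not identically zero near $0$, and then presents the corollary as a direct consequence of Theorem~\ref{thm:cepsmseries} (together with Remarks~\ref{rem:capepsfirst} and \ref{rem:cepsmseries}); your invocations of interior elliptic regularity and the identity principle for real-analytic functions merely make explicit the justification that the paper leaves tacit.
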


We now consider again an eigenvalue $\lambda_N(\Omega)$ of multiplicity $m>1$ and the associated eigenspace  $E(\lambda_N(\Omega))$ and we give the following definition (cf.~Proposition \ref{prop:DecompES}).

\begin{defn}\label{def:Qj} For all $1\le j\le p$, we define $\mathcal Q_j$ on $E_j$  by $\mathcal Q_j(u,v)\equiv\mathcal Q(u,v)$. It is a strictly positive (in particular non-degenerate) symmetric bilinear form on $E_j$.
\end{defn}

We can now describe the behavior of the eigenvalues $(\lambda_i(\Omega \setminus (\varepsilon \overline{\omega})))_{N\le i\le N+m-1}$, and more specifically give the principal part of the spectral shift $\lambda_i(\Omega \setminus (\varepsilon \overline{\omega}))-\lambda_N(\Omega)$ for each eigenvalue branch departing from $\lambda_N(\Omega)$. 
\begin{theorem} \label{thm:orderEVs} 
For $1\le j\le p$, we write 
\[m_j\equiv\mbox{dim}(E_j),\]
so that 
\[m=m_1+\dots+m_j+\dots+m_p,\]
and we denote by 
\[0<\mu_{j,1}\le\dots\le\mu_{j,\ell}\le\dots\le\mu_{j,m_j}\]
the eigenvalues of the quadratic form $\mathcal{Q}_j$. Then, for all $1\le j\le p$ and $1\le \ell \le m_j$,
\[
\lambda_{N-1+m_1+\dots+m_{j-1}+\ell}(\Omega \setminus (\varepsilon \overline{\omega}))=\lambda_N(\Omega)+\mu_{j,\ell}\,\varepsilon^{2k_j+(d-2)}+o(\varepsilon^{2k_j+(d-2)})\mbox{ as }\eps\to0^+.
\]
\end{theorem}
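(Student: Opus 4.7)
The proof plan is to adapt to dimension $d \geq 3$ the strategy developed in \cite{AbLeMu22} for the planar case. The argument has three main ingredients: an abstract spectral reduction to a finite-dimensional matrix problem, the choice of a basis of $E(\lambda_N(\Omega))$ adapted to the order decomposition of Proposition \ref{prop:DecompES}, and the block asymptotic analysis of the associated capacity matrix, which is furnished by Corollary \ref{cor:asymptCap}.

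First, I would invoke a quantitative spectral reduction: extending Theorem \ref{thm:asy:eig} to the multiple-eigenvalue case (a variant of which is developed in \cite{AbFeHiLe19, AbLeMu22}), the $m$ eigenvalue branches departing from $\lambda_N(\Omega)$ are determined, up to a negligible remainder, by the symmetric positive semi-definite $m \times m$ capacity matrix
\[
M(\varepsilon) \equiv \bigl(\mathrm{Cap}_\Omega(\varepsilon\overline\omega, v_i, v_j)\bigr)_{i,j=1}^{m}
\]
associated to an $L^2$-orthonormal basis $(v_i)$ of $E(\lambda_N(\Omega))$. Next, I would fix the order decomposition $E(\lambda_N(\Omega)) = E_1 \oplus \cdots \oplus E_p$ and, within each $E_j$, choose an $L^2$-orthonormal basis $(v_{j,1},\ldots,v_{j,m_j})$ that diagonalizes $\mathcal{Q}_j$, so that $\mathcal{Q}_j(v_{j,\ell}, v_{j,\ell'}) = \mu_{j,\ell}\,\delta_{\ell\ell'}$. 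Concatenating these and indexing by $(j,\ell)$ in the order $N-1+m_1+\cdots+m_{j-1}+\ell$ produces the desired orthonormal basis of $E(\lambda_N(\Omega))$.

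Applying Corollary \ref{cor:asymptCap} to each pair of basis vectors gives the block asymptotic
\[
M(\varepsilon)_{(j,\ell),(j',\ell')} = \varepsilon^{k_j + k_{j'} + d-2}\,\mathcal{Q}(v_{j,\ell}, v_{j',\ell'}) + o\bigl(\varepsilon^{k_j + k_{j'} + d-2}\bigr),
\]
so that the diagonal $j$-block of $M(\varepsilon)$ is asymptotic to $\mathrm{diag}(\mu_{j,1},\ldots,\mu_{j,m_j})\,\varepsilon^{2k_j+d-2}$, while the off-diagonal $(j,j')$-blocks with $j\ne j'$ have entries of intermediate order $\varepsilon^{k_j + k_{j'} + d-2}$. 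Because the strict inequalities $k_1 > \cdots > k_p$ force $\varepsilon^{2k_1+d-2} < \cdots < \varepsilon^{2k_p+d-2}$ for small $\varepsilon$, the eigenvalues of $M(\varepsilon)$ are expected to partition, at leading order, into $p$ groups of sizes $m_1,\ldots,m_p$, with the $j$-th group of order $\varepsilon^{2k_j+d-2}$ and leading coefficients $\mu_{j,1}\le\cdots\le\mu_{j,m_j}$. Combining this block spectral picture with the spectral reduction and reordering by magnitude will then yield the asymptotic formula in the statement.

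The main technical obstacle will be the block matrix perturbation argument: precisely because the off-diagonal blocks $(j,j')$ of $M(\varepsilon)$ carry intermediate powers of $\varepsilon$, they are \emph{not} automatically negligible relative to \emph{every} diagonal block, and a naive perturbation bound would suggest spurious corrections of Schur-complement type to the leading coefficients $\mu_{j,\ell}$. The delicate step is to justify that no such corrections survive, by exploiting simultaneously the strict scale separation $k_j > k_{j+1}$, the strict positivity of each $\mathcal{Q}_j$ guaranteed by Definition \ref{def:Qj}, and the min-max characterization of eigenvalues (for both $M(\varepsilon)$ and the Dirichlet-Laplacian on $\Omega \setminus (\varepsilon\overline\omega)$). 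This argument is the higher-dimensional counterpart of the one carried out in \cite{AbLeMu22} and will constitute the heart of the proof.
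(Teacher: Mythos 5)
Your proposal is correct and tracks essentially the same route as the paper, which in fact simply states that the proof of Theorem \ref{thm:orderEVs} is identical to that of \cite[Th.~1.17]{AbLeMu22} and does not reproduce it. You have correctly identified all three ingredients — the abstract spectral reduction to the $m\times m$ capacity matrix, the choice of a basis adapted to the order decomposition and diagonalizing each $\mathcal{Q}_j$, and the block asymptotics furnished by Corollary \ref{cor:asymptCap} — and you have also put your finger on the genuine technical subtlety (the intermediate-order off-diagonal blocks are not uniformly negligible, so a naive Schur-complement heuristic would suggest spurious corrections), which is exactly the delicate step handled in the cited proof via the scale separation $k_1>\dots>k_p$ and the strict positivity of each $\mathcal Q_j$.
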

{

The proof of Theorem \ref{thm:orderEVs} is identical with that of  \cite[Th. 1.17]{AbLeMu22}.  To see how the above results imply Theorem \ref{t:2}, we fix, for each $1\le j\le p$, an orthonormal basis of $E_j$,
\begin{equation*}
	v_{j,1},\dots,v_{j,\ell},\dots,v_{j,m_j}
\end{equation*}  
such that, for all $1\le \ell,\ell'\le m_j$,
\begin{equation*}
	\mathcal Q_j(v_{j,\ell},v_{j,\ell'})=\delta_{\ell\ell'}\mu_{j,\ell}. 
\end{equation*}
From Theorem \ref{thm:orderEVs} and Corollary \ref{cor:asymptCap}, it then follows that, for all $1\le j\le p$ and $1\le \ell \le m_j$, 
\begin{equation*}
	\lambda_{N-1+m_1+\dots+m_{j-1}+\ell}(\Omega \setminus (\varepsilon \overline{\omega}))=\lambda_N(\Omega)+\mathrm{Cap}_\Omega(\eps\overline\omega,v_{j,\ell})+o(\mathrm{Cap}_\Omega(\eps\overline\omega,v_{j,\ell}))\mbox{ as }\eps\to0^+.
\end{equation*}
By relabeling the $v_{j,\ell}$ with $1\le i\le m$, in increasing order, first of $j$, then of $\ell$, we obtain an orthonormal basis of $E(\lambda_N(\Omega))$,
\begin{equation*}
	v_1,\dots,v_i\dots,v_m,
\end{equation*} 	
which has the properties of Theorem \ref{t:2}.
}

Then, to illustrate our result, as in {\cite[Cor.~1.18]{AbLeMu22}, we 
deduce} the following corollary in a specific situation .

\begin{cor} \label{cor:doubleEV} Let us assume that $\lambda_N(\Omega)$ has multiplicity $2$ ({\it i.e.}, $m=2$). Then one of the following alternatives holds.
\begin{enumerate}
\item  There exist two normalized eigenfunctions $u_1,u_2\in E(\lambda_N(\Omega))\setminus\{0\}$, with respective order of vanishing $k_1,k_2$ such that $k_1>k_2$. In that case, 

\begin{align*}
	\lambda_{N}(\Omega \setminus (\varepsilon \overline{\omega}))= &\lambda_{N}(\Omega)+\mathcal Q(u_1,u_1)\eps^{2k_1+(d-2)}+o(\eps^{2k_1+(d-2)});\\
	\lambda_{N+1}(\Omega \setminus (\varepsilon \overline{\omega}))= &\lambda_{N}(\Omega)+\mathcal Q(u_2,u_2)\eps^{2k_2+(d-2)}+o(\eps^{2k_2+(d-2)}).
\end{align*}

\item All eigenfunctions in $E(\lambda_N(\Omega))$ have the same order of vanishing, which we denote by $k$. Let us note that necessarily $k\ge1$. In that case, let us choose eigenfunctions $u_1,u_2$ forming an orthonormal basis of $E(\lambda_N(\Omega))$ and let us denote by $0<\mu_1\le\mu_2$ the eigenvalues of the symmetric and positive {definite} matrix
\begin{equation*}
		\left(\begin{array}{cc}
				\mathcal Q(u_1,u_1)& \mathcal Q(u_1,u_2)\\
				\mathcal Q(u_1,u_2)& \mathcal Q(u_2,u_2)
			\end{array}
		\right).
\end{equation*}
Then
\begin{align*}
	\lambda_N(\Omega \setminus (\varepsilon \overline{\omega}))= & \lambda_{N}(\Omega)+\mu_1\eps^{2k+(d-2)}+o\left(\eps^{2k+(d-2)}\right);\\
	\lambda_{N+1}(\Omega \setminus (\varepsilon \overline{\omega}))= & \lambda_{N}(\Omega)+\mu_2\eps^{2k+(d-2)}+o\left(\eps^{2k+(d-2)}\right).
\end{align*}
\end{enumerate}
\end{cor}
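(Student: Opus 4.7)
The plan is to reduce Corollary \ref{cor:doubleEV} to a direct application of Proposition \ref{prop:DecompES}, Theorem \ref{thm:orderEVs}, and Corollary \ref{cor:asymptCap} in the specific case $m = 2$. First I would invoke Proposition \ref{prop:DecompES} to obtain the order decomposition
\[
E(\lambda_N(\Omega)) = E_1 \oplus \cdots \oplus E_p, \qquad k_1 > \cdots > k_p \geq 0,
\]
with $m_1 + \cdots + m_p = 2$. Since the only ways to write $2$ as an ordered sum of positive integers are $2 = 1+1$ (with two distinct terms) and $2 = 2$, either $p=2$ with $m_1=m_2=1$, which is case (1), or $p=1$ with $m_1=2$, which is case (2). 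These are precisely the two alternatives of the statement.

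For case (1), each $E_j$ is one-dimensional, so I would pick a normalized generator $u_j \in E_j$, which by construction has order of vanishing $k_j$ at $0$. The quadratic form $\mathcal{Q}_j$ on the 1D space $E_j$ has a unique eigenvalue $\mu_{j,1} = \mathcal{Q}_j(u_j,u_j) = \mathcal{Q}(u_j,u_j)$. Applying Theorem \ref{thm:orderEVs} with $(j,\ell)=(1,1)$ gives the index $N-1+0+1 = N$ and the asymptotics for $\lambda_N(\Omega\setminus(\varepsilon\overline{\omega}))$; applying it with $(j,\ell)=(2,1)$ gives the index $N-1+m_1+1 = N+1$ and the asymptotics for $\lambda_{N+1}(\Omega\setminus(\varepsilon\overline{\omega}))$. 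These match the formulas displayed in case (1).

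For case (2), I first need to justify that the common order $k \equiv k_1$ satisfies $k \geq 1$. Suppose for contradiction that $k = 0$, so that every nonzero $u \in E(\lambda_N(\Omega))$ has $u(0) \neq 0$. Choosing linearly independent $u_1, u_2 \in E(\lambda_N(\Omega))$, the function
\[
v \equiv u_2(0)\, u_1 - u_1(0)\, u_2
\]
belongs to $E(\lambda_N(\Omega))$, is nonzero (as $u_1, u_2$ are linearly independent and $u_1(0), u_2(0) \neq 0$), and satisfies $v(0) = 0$, contradicting the assumption. Hence $k \geq 1$. I then choose an arbitrary orthonormal basis $u_1, u_2$ of $E(\lambda_N(\Omega)) = E_1$; the bilinear form $\mathcal{Q}_1$ is represented in this basis by the symmetric positive-definite matrix displayed in the corollary, and $0 < \mu_1 \leq \mu_2$ are its eigenvalues. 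Applying Theorem \ref{thm:orderEVs} with $j=1$, $\ell = 1, 2$ yields the two asymptotics for $\lambda_N$ and $\lambda_{N+1}$ with the common exponent $2k + (d-2)$.

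The main obstacle is the minor verification that the common order $k$ in case (2) is at least $1$, which I addressed above by a linear-combination argument. All remaining steps are a straightforward reading of Proposition \ref{prop:DecompES} and Theorem \ref{thm:orderEVs} in the case $m = 2$, with no additional estimates required.
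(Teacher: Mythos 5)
Your proposal is correct and follows essentially the same route the paper intends: the corollary is stated as a direct specialization of Proposition~\ref{prop:DecompES} and Theorem~\ref{thm:orderEVs} for $m=2$ (citing \cite[Cor.~1.18]{AbLeMu22} for the same reasoning in the planar case), with the two alternatives $p=2,\ m_1=m_2=1$ and $p=1,\ m_1=2$ corresponding to the two ways of decomposing $2$. Your linear-combination argument for $k\ge 1$ in case (2) is the standard and intended justification, and the remaining index and exponent bookkeeping matches Theorem~\ref{thm:orderEVs} exactly.
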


}

\appendix

\section{Blow-up analysis for the $u$-capacity}

Following the outline proposed in \cite{FeNoOgCalcVar2021}, we perform a blow up analysis for $\mathrm{Cap}_{\Omega} (\eps\overline{\omega}, u)$, with $\Omega$, $\omega$ admissible domains and $u$ an admissible function. {Without loss of generality, we  assume that $u$ is not identically zero {in a neighborhood of $0$}.} In the final remark of this appendix we connect this with the results established in {Section \ref{s:vanish}}.

Firstly we introduce the following notation. {Let $K\subseteq \mathbb R^d$ be a compact set. For $d\geq3$, the Beppo--Levi spaces $\mathcal D^{1,2}(\R^d)$ and $\mathcal D^{1,2}(\R^d\setminus K)$ are defined as {the completion, with respect to the $L^2$-norm of the gradient, of $C^\infty_c (\mathbb{R}^d)$ and $C^\infty_c (\mathbb{R}^d\setminus K)$,  respectively. }
We recall as well that thanks to the well-known Hardy Inequality, 
\begin{equation}\label{eq:hardy}
\left(\dfrac{d-2}{2}\right)^2\int_{\mathbb R^d}\frac{{v}^2}{|x|^2} \leq  \int_{\mathbb R^d} |\nabla {v}|^2 \quad \text{(\emph{Hardy Inequality})},
\end{equation}
for all ${v}\in C^\infty_c (\mathbb{R}^d)$, the space $\mathcal D^{1,2}(\mathbb R^d)$ can be characterized in the following way
\begin{equation}\label{eq:charBL}
 \mathcal D^{1,2}(\mathbb R^d) = \left\{ {v}\in L^1_{{\mathrm{loc}}}(\mathbb R^d): \nabla {v}\in L^2(\mathbb R^d) \text{ and }\frac{{v}}{|x|}\in L^2(\mathbb R^d)  \right\}.
\end{equation}

For any $f\in H^1_{{\mathrm{loc}}}(\mathbb R^d)$ we introduce the quantity
\[
 \mathrm{Cap}_{\R^d}(K,f):= \inf\left\{ \int_{\mathbb R^d}|\nabla {v}|^2:\ {v}\in\mathcal D^{1,2}(\mathbb R^d),\ {v}-\eta_K f \in \mathcal D^{1,2}(\mathbb R^d\setminus K) \right\},
\]
where $\eta_K\in C^\infty_c(\mathbb R^d)$ such that $\eta_K=1$ in a neighborhood of $K$. In particular, when $f=1$ we {have
\[\mathrm{Cap}_{\R^d}(K,1)=\mathrm{Cap}_{\R^d}(K)\] 
(for the definition of $\mathrm{Cap}_{\R^d}(K)$ see \eqref{eq:Newcap}).}  Let now $u$ be a fixed admissible function, in the sense of Definition \ref{d:function}, not identically zero in a neighborhood of $0$.  We denote by $\overline k$ its order of vanishing $\kappa(u)$ at $0$ and we define
 \begin{equation*}
  \tilde{u}_\eps(t)\equiv
 \begin{cases}
 \dfrac{u(\eps t)}{\eps^{\overline{k}}} \qquad &\forall t \in \frac1\eps\Omega\,,\\
 0 &\forall t\in \mathbb R^d\setminus \frac1\eps\Omega\, .
 \end{cases}
\end{equation*}
Since $u$ is analytic in a neighborhood of $0$, $\tilde{u}_\eps$ converges to $u_\#$, the  principal part of $u$ defined  
 by Equation \eqref{eq:uNsharp}, uniformly in every compact subset of $\R^d$, and similarly for the derivatives of $\tilde u_\eps$ to any order. This implies in turn that for any $R>0$, as $\eps\to0$, 
\begin{align}
    &\dfrac{1}{\eps^{d+2\overline k}} \int_{\Omega \cap B_{R\eps}} {u}^2 \to \int_{B_R}{u_\#}^2,\label{eq:l2norm}\\
    &\dfrac{1}{\eps^{d-2+2\overline k}}\int_{\Omega \cap B_{R\eps}} |\nabla u|^2 \to \int_{B_R} |\nabla u_\#|^2.\label{eq:l2gradnorm}
\end{align}
{Here, if $\tilde{R}$ is a positive real number, the symbol $B_{\tilde{R}}$ denotes the open ball in $\mathbb{R}^d$ of radius $\tilde{R}$ and center $0$.} Moreover, if $K$ is a compact set with $\mathrm{Cap}_{\R^d}(K)>0$, we have that, for any $R>0$ such that $K\subset B_R$, there exists $C_{P}>0$ such that, {for any $v$ in the closure of $C^\infty_c(\overline{B_R}\setminus K)$ in $H^1(B_R)$,}
{\begin{align}\label{eq:poincare}
\int_{B_R} {v}^2 \leq C_P \int_{B_R} |\nabla {v}|^2 \quad  \quad \text{(\emph{Poincar\'{e} Inequality}).} 
\end{align}}

{ For a proof of the basic inequalities \eqref{eq:hardy} and \eqref{eq:poincare} {in a similar context} we refer the reader to \cite[Lemma 6.5]{FeNoOgCalcVar2021}} and \cite[Lemma 6.7]{FeNoOgCalcVar2021}, respectively.

In the following lemma, we deduce the vanishing rate of $\mathrm{Cap}_{\Omega}(\eps\overline \omega, u)$ as $\eps \to 0$.

\begin{lem}\label{l:Ogrande}
For $\eps\to0$,
\[
\mathrm{Cap}_{\Omega}(\eps\overline \omega, u) =O(\eps^{d-2+2\overline k}).
\]
\end{lem}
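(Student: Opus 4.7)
The plan is to bound $\mathrm{Cap}_\Omega(\eps\overline\omega,u)$ from above by constructing an explicit admissible competitor in the variational definition \eqref{eq:ucap} and estimating its Dirichlet energy. Via the identity $\mathrm{Cap}_\Omega(K,u)=\mathrm{Cap}_\Omega(K,\eta_K u)$, we may first assume, without loss of generality, that $u\in H^1_0(\Omega)$ coincides with the original analytic function on a fixed ball $B_{r_0}$ with $\overline{B_{r_0}}\subset\Omega$, and that $\eps\overline\omega\subset B_{r_0/2}$ throughout.

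The key building block is an outer harmonic extension of $u$: let $\widetilde U_\eps$ be the unique function in $C^{1,\alpha}_{\mathrm{loc}}(\R^d\setminus\eps\overline\omega)$ which is harmonic in $\R^d\setminus\eps\overline\omega$, equals $u$ on $\eps\partial\omega$, and tends to $0$ at infinity. The rescaled function $\widehat U_\eps(y):=\eps^{-\overline k}\widetilde U_\eps(\eps y)$ solves the exterior Dirichlet problem in $\R^d\setminus\overline\omega$ with boundary data $\eps^{-\overline k}u(\eps\cdot)|_{\partial\omega}$, which converges to $u_{\#|\partial\omega}$ in $C^{1,\alpha}(\partial\omega)$ by the Taylor expansion $u(\eps y)=\eps^{\overline k}u_\#(y)+O(\eps^{\overline k+1})$. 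Consequently $\widehat U_\eps$ is uniformly bounded in $\mathcal D^{1,2}(\R^d\setminus\overline\omega)$ and, by the standard asymptotic for harmonic functions decaying at infinity, $\widehat U_\eps(y)=O(|y|^{2-d})$ uniformly in $\eps$. Scaling back yields
\[\int_{\R^d\setminus\eps\overline\omega}|\nabla\widetilde U_\eps|^2\,dx=\eps^{d-2+2\overline k}\int_{\R^d\setminus\overline\omega}|\nabla\widehat U_\eps|^2\,dy=O(\eps^{d-2+2\overline k}),\]
and the pointwise bound $|\widetilde U_\eps(x)|=O(\eps^{\overline k+d-2}|x|^{2-d})$, uniform for $|x|\ge r_0/2$.

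Fix a cutoff $\phi\in C^\infty_c(B_{r_0})$ with $\phi\equiv 1$ on $B_{r_0/2}$, and define the test function
\[f_\eps(x):=\begin{cases} u(x), & x\in\eps\overline\omega,\\ \phi(x)\widetilde U_\eps(x), & x\in\Omega\setminus\eps\overline\omega.\end{cases}\]
Since $\phi\equiv 1$ in a neighborhood of $\eps\overline\omega$ and $\widetilde U_\eps=u$ on $\eps\partial\omega$, the two definitions agree along $\eps\partial\omega$; hence $f_\eps\in H^1_0(\Omega)$ (it vanishes outside $B_{r_0}$) and $f_\eps-u\in H^1_0(\Omega\setminus\eps\overline\omega)$ (because $f_\eps-u\equiv 0$ on $\eps\overline\omega$). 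Thus $f_\eps$ is admissible for the infimum in \eqref{eq:ucap}.

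It remains to bound the energy. Splitting the domain of integration,
\[\int_{\Omega}|\nabla f_\eps|^2\,dx=\int_{\eps\overline\omega}|\nabla u|^2\,dx+\int_{\Omega\setminus\eps\overline\omega}|\nabla(\phi\widetilde U_\eps)|^2\,dx.\]
The first integral is $O(\eps^{d-2+2\overline k})$ by the rescaling \eqref{eq:l2gradnorm}. In the second, $\phi^2|\nabla\widetilde U_\eps|^2\le|\nabla\widetilde U_\eps|^2$ integrates to $O(\eps^{d-2+2\overline k})$ by the scaling established above, while the product-rule contribution $|\nabla\phi|^2\,|\widetilde U_\eps|^2$ is supported in the annulus $\{r_0/2\le|x|\le r_0\}$ where $|\widetilde U_\eps|=O(\eps^{\overline k+d-2})$, giving $O(\eps^{2\overline k+2(d-2)})=o(\eps^{d-2+2\overline k})$ for $d\ge 3$. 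Combining these yields $\mathrm{Cap}_\Omega(\eps\overline\omega,u)\le\int_\Omega|\nabla f_\eps|^2=O(\eps^{d-2+2\overline k})$, as claimed. The only nonroutine step is the uniform blow-up analysis for $\widehat U_\eps$, which follows from the continuous dependence of the exterior Dirichlet problem on its boundary data; the rest of the argument is standard cutoff bookkeeping.
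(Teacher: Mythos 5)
Your proof is correct but uses a genuinely different construction from the paper's. You build the optimal (energy-wise) competitor: the exterior harmonic extension $\widetilde U_\eps$ of $u$ across $\eps\partial\omega$ (decaying at infinity), then glue it to $u$ inside $\eps\overline\omega$ and cut it off with a \emph{fixed} (macroscopic) cutoff $\phi$ supported in $B_{r_0}$. Controlling the energy then requires a uniform blow-up estimate on the rescaled harmonic extensions $\widehat U_\eps$, which you supply via the Taylor expansion of $u$ at $0$, continuous dependence of the exterior Dirichlet problem on $C^{1,\alpha}$ boundary data, and the $O(|y|^{2-d})$ decay of exterior-harmonic functions. The paper's proof is much more economical: it simply tests with $\eta_\eps u$, where $\eta_\eps$ is a cutoff supported at \emph{scale $\eps$} (so $\eta_\eps\equiv 1$ on $B_{R\eps}$, $\eta_\eps\equiv 0$ off $B_{2R\eps}$, $|\nabla\eta_\eps|\lesssim(\eps R)^{-1}$), and both resulting terms, $\int\eta_\eps^2|\nabla u|^2$ and $\int u^2|\nabla\eta_\eps|^2$, are of order $\eps^{d-2+2\overline k}$ directly by the rescalings \eqref{eq:l2norm} and \eqref{eq:l2gradnorm}, with no harmonic extension or exterior asymptotics at all. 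Your version has the virtue of exhibiting the (asymptotically optimal) potential and essentially anticipating the limit identity \eqref{eq:capRd}; the paper's version is shorter and uses only elementary tools. One small but important bookkeeping point you got right and should stress in a write-up: the cross term $\int|\nabla\phi|^2\widetilde U_\eps^2$ is supported in the macroscopic annulus where $|\widetilde U_\eps|=O(\eps^{\overline k+d-2})$, which contributes $O(\eps^{2\overline k+2(d-2)})$ -- this is $o(\eps^{d-2+2\overline k})$ precisely because $d\ge 3$; for $d=2$ the same construction would not close this way, which is consistent with the fact that the two-dimensional analysis in the cited companion papers requires a logarithmic correction.
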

\begin{proof}
If $\eps$ is sufficiently small there exists $R>0$ such that $ \eps \overline \omega \subset B_{R\eps}\subset B_{2R\eps}\subset \Omega$. Let $\eta_\eps \in C^\infty_c(\Omega)$ such that $0\leq \eta_\eps \leq 1$, $\eta_\eps\equiv 1 $ in $ B_{R\eps}$, $\eta_\eps\equiv 0 $ in $\Omega\setminus B_{2R\eps}$ and $|\nabla \eta_\eps|\leq \frac{4}{\eps R}$ in $\Omega$. Then $\eta_\eps u \in H^1_0(\Omega)$ and $\eta_\eps u - u \in H^1_0(\Omega \setminus \eps\overline{\omega})$. Therefore
\begin{equation*}
  \mathrm{Cap}_\Omega (\eps\overline{\omega}, {u}) \leq \int_\Omega |\nabla (\eta_\eps u)|^2 = \int_\Omega |\eta_\eps\nabla u + u \nabla \eta_\eps|^2 \leq 2 \int_\Omega {\eta_\eps}^2 |\nabla u|^2 + 2 \int_\Omega {u}^2 |\nabla \eta_\eps|^2 = O(\eps^{d-2+2\overline{k}}),
\end{equation*}
thanks to \eqref{eq:l2norm} and \eqref{eq:l2gradnorm} together with the properties of $\eta_\eps$.
\end{proof}

We now define a suitable rescaling of the $u$-capacitary potential, that is 
\begin{equation*}
  \Tilde{V_\eps}(t)\equiv
 \begin{cases}
 \dfrac{V_{\eps\overline{\omega},u}(\eps t)}{\eps^{\overline{k}}} \qquad &\forall t \in \frac1\eps\Omega\, ,\\
 0 &\forall t\in \mathbb R^d\setminus \frac1\eps\Omega\, ,
 \end{cases}
\end{equation*}
for any positive $\eps$. Let us note that $\Tilde{V_\eps}\in H^1_{{\mathrm{loc}}}(\mathbb R^d)$.
 \begin{lem}
    Let $R>0$. There exists $C>0$ such that 
    \[
    \|\tilde V_\eps\|_{H^1(B_R)} \leq C
    \]
    if $\eps$ is small enough. 
\end{lem}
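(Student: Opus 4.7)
The plan is to bound $\|\nabla \tilde V_\eps\|_{L^2(B_R)}$ and $\|\tilde V_\eps\|_{L^2(B_R)}$ separately. For $\eps$ small enough, $B_R \subset \frac{1}{\eps}\Omega$, so the extension by zero plays no role on $B_R$. For the gradient, the change of variable $x = \eps t$ gives
\[
\int_{B_R} |\nabla \tilde V_\eps(t)|^2 \, dt = \eps^{2-d-2\overline{k}} \int_{B_{R\eps}} |\nabla V_{\eps\overline{\omega},u}(x)|^2\, dx \leq \eps^{2-d-2\overline{k}} \, \mathrm{Cap}_\Omega(\eps\overline\omega, u),
\]
which is $O(1)$ by Lemma \ref{l:Ogrande}.

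For the $L^2$ bound, the Poincaré inequality \eqref{eq:poincare} cannot be applied directly to $\tilde V_\eps$, since on $\overline\omega$ the latter equals $\tilde u_\eps$ and not zero. Without loss of generality I would enlarge $R$ so that $\overline\omega \subset B_R$, fix a cutoff $\eta \in C^\infty_c(B_R)$ with $\eta \equiv 1$ in a neighborhood of $\overline\omega$, and introduce $w_\eps \equiv \tilde V_\eps - \eta\,\tilde u_\eps$. Since $V_{\eps\overline\omega, u} = u$ on $\eps\overline\omega$ by definition of the capacitary potential, $w_\eps$ vanishes on $\overline\omega$ in the trace sense, and hence lies in the closure of $C^\infty_c(\overline{B_R}\setminus\overline\omega)$ in $H^1(B_R)$. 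Then \eqref{eq:poincare} applied to $w_\eps$ gives
\[
\int_{B_R}w_\eps^2 \leq C_P\int_{B_R}|\nabla w_\eps|^2 \leq 2 C_P \int_{B_R}|\nabla \tilde V_\eps|^2 + 2 C_P\int_{B_R}|\nabla(\eta \tilde u_\eps)|^2.
\]
The first term on the right is controlled by the previous step, while the second is bounded uniformly in $\eps$ by the $C^1$-convergence $\tilde u_\eps \to u_\#$ on compact subsets of $\R^d$ (granted by the analyticity of $u$ at $0$). An analogous bound for $\|\eta \tilde u_\eps\|_{L^2(B_R)}$ then closes the estimate for $\|\tilde V_\eps\|_{L^2(B_R)} = \|w_\eps + \eta \tilde u_\eps\|_{L^2(B_R)}$.

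The main obstacle is the verification that $w_\eps$ is in the correct closure for the Poincaré inequality \eqref{eq:poincare} to apply. This rests on interpreting $V_{\eps\overline\omega,u} = u$ on $\eps\overline\omega$ at the trace level after rescaling, combined with a standard density argument; the admissibility of $\omega$ ensures both that $\overline\omega$ has positive Newtonian capacity (so that $C_P$ in \eqref{eq:poincare} is finite) and the boundary regularity needed for these trace manipulations. The remainder of the argument is routine scaling, together with the uniform convergence of $\tilde u_\eps$ and its derivatives to those of $u_\#$ on compact subsets of $\R^d$.
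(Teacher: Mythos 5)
Your proof is correct and follows essentially the same strategy as the paper: bound the gradient via a change of variables and Lemma~\ref{l:Ogrande}, then control the $L^2$ norm by applying the Poincar\'e inequality \eqref{eq:poincare} to a shift of $\tilde V_\eps$ that vanishes on $\overline\omega$. The only deviation is cosmetic: you subtract the localized $\eta\,\tilde u_\eps$ rather than $\tilde u_\eps$ itself, whereas the paper works directly with $\tilde V_\eps - \tilde u_\eps$ (which already vanishes on $\overline\omega$ since $V_{\eps\overline\omega,u}=u$ on $\eps\overline\omega$, so no cutoff is needed once $\eps$ is small enough that $B_R \subset \frac{1}{\eps}\Omega$); both variants then split $\|\nabla(\tilde V_\eps - \cdot\,\tilde u_\eps)\|_{L^2(B_R)}^2$ into the two controlled pieces. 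The cutoff buys you compact support of the correction inside $B_R$ and makes the membership of $w_\eps$ in the closure of $C^\infty_c(\overline{B_R}\setminus\overline\omega)$ slightly more transparent, at the modest cost of tracking $\eta$ through the final decomposition $\tilde V_\eps = w_\eps + \eta\,\tilde u_\eps$; the paper's choice is marginally shorter.
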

\begin{proof}
    Let $R_0>0$ such that $B_{R_0}\subset \Omega$. We have
    \begin{align*}
        & \int_\Omega |\nabla V_{\eps\overline{\omega},u}|^2 \geq \int_{B_{R_0}} |\nabla V_{\eps\overline{\omega},u}|^2 = \eps^{d-2+2\overline{k}}\int_{B_{\frac{R_0}{\eps}}} \dfrac{\big|\nabla \big(V_{\eps\overline{\omega},u}(\eps {t})\big)\big|^2}{\eps^{2\overline{k}}} =\eps^{d-2+2\overline{k}}\int_{B_{\frac{R_0}{\eps}}} |\nabla \tilde V_{\eps}|^2.
    \end{align*}
    From Lemma \ref{l:Ogrande} and the 
    {previous inequality,} we deduce that 
    \[
    \int_{B_{\frac{R_0}{\eps}}} |\nabla \tilde V_{\eps}|^2 = O(1)
    \]
    as $\eps\to0$ and {that} there exists $C_1>0$ such that for any $R>0$ 
    \begin{equation}\label{eq:gradbounded}
    \int_{B_{R}} |\nabla \tilde V_{\eps}|^2 \leq C_1 \quad \text{for any }\eps\in{\bigg]}0,\frac{R_0}{R}{\bigg[}.    
    \end{equation}
    
    On the other hand, 
    \begin{align*}
        \int_{B_R} |\tilde V_\eps|^2 &= \int_{B_R} |\tilde V_\eps - \tilde u_\eps + \tilde u_\eps|^2 \leq 2 \int_{B_R} |\tilde V_\eps - \tilde u_\eps|^2 + 2 \int_{B_R} |\tilde u_\eps|^2 \\
        &\leq 2 C_P\int_{B_R} | \nabla (\tilde V_\eps - \tilde u_\eps)|^2 + 2 \int_{B_R} |\tilde u_\eps|^2 \leq 4 C_P \left( \int_{B_R} | \nabla \tilde V_\eps|^2 + \int_{B_R} | \nabla \tilde u_\eps|^2 \right)+ 2 \int_{B_R} |\tilde u_\eps|^2 
    \end{align*}
    thanks to \eqref{eq:poincare} applied on $\tilde V_\eps - \tilde u_\eps$. Every term in the last sum can be proved to be bounded following the preceding argument and taking into account \eqref{eq:l2norm} and \eqref{eq:l2gradnorm}, so that there exists { $\bar\eps\in ]0,1[$ and} $C_2>0$ such that
    \begin{equation}\label{eq:bounded}
    \int_{B_{R}} | \tilde V_{\eps}|^2 \leq C_2 \quad \text{for any }\eps\in {\Big]}0,{\min\left\{\bar\eps,\tfrac{R_0}{R}\right\}}{\Big[}. 
    \end{equation}
    Equations \eqref{eq:gradbounded} and \eqref{eq:bounded} conclude the proof.
\end{proof}

From this last lemma and a diagonal process over a sequence of $R_n\to +\infty$ we deduce that there exist $W\in H^1_{{\mathrm{loc}}}(\mathbb R^d)$ and a subsequence (still denoted with $\eps$) such that 
\[
\tilde V_\eps \to W \quad \text{as }\eps\to0 
\]
weakly in $H^1(B_R)$, strongly in $L^2(B_R)$ for any $R>0$ and almost everywhere in $\R^d$.

By a change of variables we have
\begin{equation}\label{eq:gradboundedtilde}
 \int_{\mathbb R^d} |\nabla \Tilde V_\eps|^2 = \dfrac1{\eps^{d-2+2\overline k}}\int_{\Omega}|\nabla V_{\eps\overline \omega,u}|^2 =  \dfrac1{\eps^{d-2+2\overline k}} \mathrm{Cap}_{\Omega}(\eps\overline \omega, u) = O(1)
\end{equation}
as $\eps\to0$ thanks to Lemma \ref{l:Ogrande}. Via Hardy Inequality \eqref{eq:hardy} we obtain also
\[
 \int_{\mathbb R^d} \dfrac{| \Tilde V_\eps|^2}{|x|^2}\leq \left( \dfrac{2}{d-2} \right)^2 \int_{\mathbb R^d} |\nabla \Tilde V_\eps|^2 \leq C
\]
uniformly with respect to $\eps$. Via Fatou's {lemma,} we have
\begin{equation}\label{eq:Wfatou}
 \int_{\mathbb R^d} \dfrac{| W|^2}{|x|^2} = \int_{\mathbb R^d} \liminf_{\eps\to0}\dfrac{| \Tilde V_\eps|^2}{|x|^2} \leq \liminf_{\eps\to0}\int_{\mathbb R^d} \dfrac{| \Tilde V_\eps|^2}{|x|^2} \leq \Tilde C
\end{equation}
uniformly with respect to $\eps$ small enough. Moreover, for any $R>0$ we have 
\[
 \int_{B_R}|\nabla W|^2 \leq \liminf_{\eps\to0} \int_{B_R}|\nabla \Tilde V_\eps|^2 \leq \bar C
\]
where the last inequality follows from \eqref{eq:gradboundedtilde}
(the constant $\bar C$ does not depend on $R$). 
We then deduce that 
\begin{equation}\label{eq:gradW}
 \int_{\mathbb R^d}|\nabla W|^2 \leq  \bar C\,.
\end{equation}
From \eqref{eq:Wfatou} and \eqref{eq:gradW} we deduce that $W\in \mathcal D^{1,2}(\R^d)$ using the characterization \eqref{eq:charBL}.

{On the other hand, $V_{\eps \overline{\omega},u}$ weakly solves the problem 
\begin{equation}\label{eq:Vepsomega}
\begin{cases}
-\Delta V_{\eps \overline{\omega},u}=0 &\text{in }\Omega\setminus\eps\overline{\omega},\\
V_{\eps \overline{\omega},u} = u &\text{on }\eps\overline{\omega},\\
V_{\eps \overline{\omega},u}=0 &\text{on }\partial\Omega,
\end{cases}
\end{equation}
and in particular
\[
 \int_{\frac1\eps\Omega\setminus \overline{\omega}} \nabla \tilde V_\eps \cdot \nabla \varphi { = 0}\quad \text{for all }\varphi \in H^1_0(\tfrac1\eps\Omega\setminus \overline{\omega}).
\]
Taking the limit for $\eps\to 0 $ in the previous equation we obtain that for any $R>0$ 
\[
 \int_{B_R \setminus \overline{\omega}} \nabla W\cdot \nabla \varphi =0 \quad  \text{for all }\varphi \in H^1_0(B_R \setminus \overline{\omega})
\]
and therefore
\[
 \int_{\R^d \setminus \overline{\omega}} \nabla W\cdot \nabla \varphi =0 \quad  \text{for all }\varphi \in C^\infty_c(\R^d \setminus \overline{\omega}).
\]
As a consequence, thanks also to the uniform convergence $\tilde u_\eps \to u_\#$ on compact sets, $W$ is a weak solution of the problem  
}
{
\[
\begin{cases}
    -\Delta W=0 &\text{in }\R^d {\setminus \overline{\omega}}\\
    W= u_\# &\text{on }\overline{\omega}\\
    W\in \mathcal D^{1,2}(\R^d)
\end{cases}
\]
By standard variational methods, the problem above for $W$ has a unique weak solution which we denote by $V_{\R^d, u_\#}$, {\it i.e.,}~ $W=V_{\R^d, u_\#}$.}

Let $\eta $ be a cut-off function with compact support such that $\eta\equiv 1$ on $\overline\omega$. Testing the previous equation with $W-\eta u_\# \in \mathcal D^{1,2}(\mathbb R^d\setminus \overline \omega)$ we obtain
\begin{equation}\label{eq:W33}
 \int_{\mathbb R^d\setminus \overline \omega}|\nabla W|^2 = \int_{\mathbb R^d\setminus \overline \omega} \nabla W\cdot \nabla (\eta u_\#).
\end{equation}

Letting $\eta_\eps(x){\equiv}\eta(\tfrac{x}{\eps})$, we can {test equation \eqref{eq:Vepsomega} with} $\varphi = V_{\eps \overline{\omega},u} - \eta_\eps u$, obtaining
\[
 \int_{\Omega\setminus \eps\overline \omega} |\nabla V_{\eps \overline{\omega},u}|^2 = \int_{\Omega\setminus \eps\overline \omega} \nabla V_{\eps \overline{\omega},u}\cdot \nabla (\eta_\eps u). 
\]
By a change of variables we obtain
\begin{equation}\label{eq:external}
 \dfrac{1}{\eps^{d-2+2\overline k}}\int_{\Omega\setminus \eps\overline \omega} |\nabla V_{\eps \overline{\omega},u}|^2 = \int_{\tfrac1\eps\Omega\setminus \overline \omega} \nabla \Tilde V_{\eps}\cdot \nabla (\eta \tilde u_\eps) \to \int_{\mathbb R^d\setminus \overline \omega} \nabla W\cdot \nabla (\eta u_\#) 
\end{equation}
as $\eps\to0$ thanks to the weak convergence $\Tilde V_{\eps}\rightharpoonup W$ in $H^1(B_R)$ for any $R>0$ and the uniform convergence $\tilde u_\eps \to u_\#$ on compact sets. Moreover, the very same convergence implies
\begin{equation}\label{eq:internal}
 \dfrac{1}{\eps^{d-2+2\overline k}}\int_{\eps\overline \omega} |\nabla u|^2 =
 \int_{\overline \omega} |\nabla \tilde u_\eps|^2 \to \int_{\overline \omega} |\nabla u_\# |^2 \quad \text{as }\eps\to0.
\end{equation}

In view of \eqref{eq:internal}, \eqref{eq:external} and \eqref{eq:W33} we deduce that  

\begin{equation}\label{eq:capRd}
\eps^{-(d-2+2\overline{k})}\mathrm{Cap}_\Omega (\eps\overline{\omega}, u) \to \mathrm{Cap}_{\R^d} (\overline{\omega}, u_\#) \quad \text{as }\eps\to 
 0,
\end{equation}
independently from the subsequence.  

{Let us note that,} when $u$ is the eigenfunction $u_N$, $V_{\R^d, u_{N,\#,\overline{k}}|\overline{\omega}}=u_{N,\#,\overline{k}|\overline{\omega}}$ and $V_{\R^d, u_{N,\#,\overline{k}}|\mathbb{R}^d\setminus \omega}=\mathsf{u}_{N,\overline{k}}$. {Equation \eqref{eq:capRd} can therefore be written
\begin{equation*}
\mathrm{Cap}_\Omega (\eps\overline{\omega}, u_N) = \mathrm{Cap}_{\R^d} (\overline{\omega}, u_{N,\#,\overline{k}})\eps^{2\overline{k}+d-2}+o\left(\eps^{2\overline{k}+d-2}\right) \quad \text{as }\eps\to 
 0.
\end{equation*}
}

\begin{rem}
We recall that in our context $\omega$ is an open regular bounded set in $\R^d$ and $u_\#$ is a harmonic homogeneous polynomial of degree $\overline{k}$. As a consequence, the quantity $\mathrm{Cap}_{\R^d} (\overline{\omega}, u_\#)$ is strictly positive and \eqref{eq:capRd} is in fact a sharp asymptotics.

Moreover, from {Remark} \ref{rem:cepsmseries} we deduce that 
{\[
\Bigg(\int_{\mathbb{R}^d \setminus \overline{\omega}}|\nabla \mathsf{u}_{N,\overline{k}}|^2\, dt+\int_{\omega} |\nabla u_{N,\#,\overline{k}}|^2 \, dt\Bigg) = \mathrm{Cap}_{\R^d} (\overline{\omega}, u_{N,\#,\overline{k}}) >0.
\]}
\end{rem}

\begin{rem}
 The blow-up analysis presented above can be also performed for the $(u,v)$-capacity provided suitable modifications.
\end{rem}

\subsection*{Acknowledgment}

L.A. has been supported by MUR grant Dipartimento di Eccellenza 2023-2027.  C.L. and P.M. are members of the ``Gruppo Nazionale per l'Analisi Matematica, la Probabilit\`a e le loro Applicazioni'' (GNAMPA) of the ``Istituto Nazionale di Alta Matematica'' (INdAM) {and acknowledge the support of the ``INdAM GNAMPA Project'' codice CUP\_E53C22001930001  ``Operatori differenziali e integrali in geometria spettrale''.} P.M. {also} acknowledges the support from EU through the H2020-MSCA-RISE-2020 project EffectFact, 
Grant agreement ID: 101008140, and  the support of  the SPIN Project  ``DOMain perturbation problems and INteractions Of scales - DOMINO''  of the Ca' Foscari University of Venice.


\end{document}